\newcommand{\IR}{\mathds{R}}
\newcommand{\IN}{\mathds{N}}
\newcommand{\IP}{\mathds{P}}
\newcommand{\IE}{{\mathbb{E}}}
\newcommand{\E}{\mathds{E}}
\newcommand*{\cA}{\mathcal{A}}
\newcommand*{\cF}{\mathcal{F}}
\newcommand*{\cG}{\mathcal{G}}
\newcommand*{\cM}{\mathcal{M}}
\newcommand{\1}{\mathbb{1}}
\renewcommand{\phi}{\varphi}
\renewcommand{\epsilon}{\varepsilon}
\newcommand{\qvar}[2]{\langle #1 , #1 \rangle _{#2}}
\newcommand{\SEP}{\text{SEP}}
\newcommand{\esssup}{\operatorname{ess\hspace{0.5mm}sup}}
\newcommand{\Id}{\operatorname{Id}}
\renewcommand{\d}{\operatorname{d} \hspace{-0.5mm}}
\newcommand{\Dt}{\operatorname{D}^t}
\newcommand{\Dw}{\operatorname{D}^w}
\newcommand{\diff}[3][]{\ifthenelse{\isempty{#1}}{\partial_{#2} #3}{\partial_{#1} \partial_{#2} #3}}
\newcommand{\p}[3]{#1^{(#2)}_{#3}}
\newcommand{\Xz}[2]{X^{(2), (#1) }_{ #2 }}
\newcommand{\Xd}[2]{X^{(3), (#1) }_{ #2 }}
\newcommand{\X}[2]{\p{X}{#1}{#2}}%{X^{(#1)}_{#2}}
\newcommand{\x}[1]{x^{(#1)}}
\renewcommand{\u}[2]{\ifthenelse{\isempty{#2}}{u^{(#1)}}{u^{(#1)}_{#2}}} %\p{u}{#1}{#2}}
\newcommand{\Z}[2]{\p{\tilde{Z}}{#1}{#2}}%{\tilde{Z}^{(#1)}_{#2}}
\newcommand{\Zo}[2]{\p{Z}{#1}{#2}}%{Z^{(#1)}_{#2}}
\newcommand{\W}[1]{\widetilde{W}_{#1}}
\newcommand{\Y}[2]{\p{Y}{#1}{#2}}%{Y^{(#1)}_{#2}}
\newcommand{\ds}[0]{\,\mathrm{d}}
\newcommand{\dn}[0]{\partial}
\newcommand{\fbsde}[0]{}
\renewcommand{\IP}{\mathbf{P}}
\newtheoremstyle{thm}% name
{10pt}% Space above
{18pt}% Space below 
{\itshape}% Body font
{}% Indent amount: Indent amount: empty = no indent, \parindent = normal paragraph indent
{\bf}% Theorem head font
{}% Punctuation after theorem head
{\newline }% Space after theorem head: { } = normal interword space; \newline = linebreak
{}% Theorem head spec (can be left empty, meaning `normal')
\newtheorem{theorem}{Theorem}[section]
\newtheorem{lemma}[theorem]{Lemma}
\newtheorem{proposition}[theorem]{Proposition}
\newtheorem{assumption}[theorem]{Assumption}
\newtheorem{definition}[theorem]{Definition}
\newtheorem{example}[theorem]{Example}
\newtheorem{remarks}[theorem]{Remark}
\numberwithin{equation}{section}
\numberwithin{figure}{section}
\numberwithin{table}{section}
\begin{document}
\selectlanguage{UKenglish}
\author{Stefan Ankirchner
\footnote{University of Jena (Germany), s.ankirchner@uni-jena.de},
Stefan Engelhardt
\footnote{S.~Engelhardt was partially supported by the German Exchange DAAD (Nr. 57369588) and acknowledges the hospitality of the University of Edinburgh.}
\footnote{University of Jena (Germany), engelhardt.stefan@uni-jena.de}, 
Alexander Fromm
\footnote{A. Fromm acknowledges support from the \emph{German Research Foundation} through the project AN 1024/4-1.}
\footnote{University of Jena (Germany), alexander.fromm@uni-jena.de}, 
Gon\c{c}alo dos Reis
\footnote{G. dos Reis acknowledges support from the \emph{Funda{\c c}$\tilde{\text{a}}$o para a Ci$\hat{e}$ncia e a Tecnologia} (Portuguese Foundation for Science and Technology) through the project UID/MAT/00297/2019 (Centro de Matem\'atica e Aplica\c c$\tilde{\text{o}}$es CMA/FCT/UNL).}
\footnote{University of Edinburgh (UK) and Centro de Matem\'atica e Aplica\c c$\tilde{\text{o}}$es/FCT/UNL (PT), G.dosReis@ed.ac.uk}
}
\title{The Skorokhod embedding problem for inhomogeneous diffusions}

\date{ \currenttime, \ddmmyyyydate\today}

\maketitle
%\tableofcontents
%%%%%%%%%%%%%%%%%%%%%%%%%%%%%%%%%%%%%%%%%%%%%%%%%%%%%%%%%%%%%%%%%
%%%%%%%%%%%%%%%%%%%%%%%%%%%%%%%%%%%%%%%%%%%%%%%%%%%%%%%%%%%%%%%%%
%%%%%%%%%%%%%%%%%%%%%%%%%%%%%%%%%%%%%%%%%%%%%%%%%%%%%%%%%%%%%%%%%
%%%%%%%%%%%%%%%%%%%%%%%%%%%%%%%%%%%%%%%%%%%%%%%%%%%%%%%%%%%%%%%%%
%%%%%%%%%%%%%%%%%%%%%%%%%%%%%%%%%%%%%%%%%%%%%%%%%%%%%%%%%%%%%%%%%
\renewcommand*{\thefootnote}{\arabic{footnote}}

%%%%%%%%%%%%%%%%%%%%%%%%%%%%%%%%%
\begin{abstract} 
We solve the Skorokhod embedding problem for a class of stochastic processes satisfying an inhomogeneous stochastic differential equation (SDE) of the form $\d A_t =\mu (t, A_t) \d t + \sigma(t, A_t) \d W_t$. We provide sufficient conditions guaranteeing that for a given probability measure $\nu$ on $\IR$ there exists a bounded stopping time $\tau$ and a real $a$ such that the solution $(A_t)$ of the SDE with initial value $a$ satisfies $A_\tau \sim \nu$. We hereby distinguish the cases where $(A_t)$ is a solution of the SDE in a weak or strong sense. Our construction of embedding stopping times is based on a solution of a fully coupled forward-backward SDE. We use the so-called method of decoupling fields for verifying that the FBSDE has a unique solution. Finally, we sketch an algorithm for putting our theoretical construction into practice and illustrate it with a numerical experiment. 
\end{abstract}
%%%%%%%%%%%%%%%%%%%%%%%%%%%%%%%%%
{\bf Keywords:} Skorokhod embedding, decoupling fields, FBSDE.
%\paragraph*{}
%%%%%%%%%%%%%%%%%%%%%%%%%%%%%%%%%
\vspace{0.3cm}

\noindent
{\bf 2010 AMS subject classifications:}\\
Primary: 60G40. Secondary: 60H10, 60J25

%\begin{itemize}
	%\item Primary 60G40 (Stopping times; optimal stopping problems;)
	%\item Secondary
	%\begin{itemize}
		%\item 60H10  	(Stochastic ordinary differential equations)
		%\item 60J25  	(Continuous-time Markov processes on general state spaces)
		%\item ??
	%\end{itemize}
%\end{itemize}

%
%\vspace{0.3cm}
%
%\noindent{\bf JEL subject classifications:}\\
%??, ??, ??
%G11, G12, G13, C73
%%%%%%%%%%%%%%%%%%%%%%%%%%%%%%%%%

%
%
%
%%%%%%%%%%%%%%%%%%%%%%%%%%%%%%%%%%%%%%%%%%%%%%%%%%%%%%%%%%%%%%
%%%%%%%%%%%%%%%%%%%%%%%%%%%%%%%%%%%%%%%%%%%%%%%%%%%%%%%%%%%%%%
%%%%%%%%%%%%%%%%%%%%%%%%%%%%%%%%%%%%%%%%%%%%%%%%%%%%%%%%%%%%%%
%%%% \BEGIN SECTION
%%%%%%%%%%%%%%%%%%%%%%%%%%%%%%%%%%%%%%%%%%%%%%%%%%%%%%%%%%%%%%
\section{Introduction}

Let $\nu$ be a probability measure on $\IR$, let $\mu, \sigma: [0,\infty) \times \IR \to \IR$ be continuous in both arguments and let $(A_t)_{t \ge 0}$ be a stochastic process satisfying the inhomogeneous stochastic differential equation (SDE) 
\begin{equation}\label{sde:base}
\d A_t =\mu (t, A_t) \d t + \sigma(t, A_t) \d W_t,
\end{equation}
where $W$ is a Brownian motion. In this article we consider the Skorokhod embedding problem (SEP) for $\nu$ in $(A_t)$. More precisely, we provide sufficient conditions on $\mu$, $\sigma$ and $\nu$ guaranteeing the existence of a stopping time $\tau$ and a real number $a$ such that the solution of the SDE \eqref{sde:base}, in a weak or strong sense, with initial condition $A_0 = a$ satisfies $A_\tau \sim \nu$. 

We solve the embedding problem by reducing it to the forward-backward stochastic differential equation (FBSDE)
\begin{equation}
\label{fbsde:base}
\begin{array}{rl}
\X{1}{s} =& x^{(1)} + W_s \\
\X{2}{s} =& x^{(2)} + \int _0 ^s \frac{Z_r^2}{\sigma^2 ( \X{2}{r}, Y_r + \X{3}{r} ) } dr \\
\X{3}{s} =& x^{(3)} + \int_0 ^s \mu (\X{2}{r}, Y_r + \X{3}{r}) \frac{Z_r^2}{\sigma^2 ( \X{2}{r}, Y_r + \X{3}{r} ) } dr \\
Y_s =& g(\X{1}{1} ) - \X{3}{1} - \int _s ^1 Z_r \d W_r 
\end{array}
\end{equation}
for $s \in [0,1]$ and $(x^{(1)},x^{(2)},x^{(3)}) \in \IR^3$, where $g$ is a real function chosen such that $g(W_1) \sim \nu$. 
Notice that the FBSDE \eqref{fbsde:base} is fully coupled, i.e.\ the second and third forward equation depend on the solution components $Y$ and $Z$ of the backward equation; and, vice versa, the backward equation depends on the forward components $X^{(1)}$ and $X^{(3)}$. 

It is a longstanding challenge to find conditions guaranteeing that a fully coupled FBSDE possesses a solution. Sufficient conditions are provided e.g.\ in \cite{Ma1994}, \cite{Pardoux1999}, \cite{ma:yon:99}, \cite{peng:wu:99}, \cite{Delarue2002}, \cite{ma:wu:zhang:15} (see also references therein). The method of decoupling fields, developed in \cite{Fromm15} (see also the precursor articles \cite{ma:yin:zhan:12}, \cite{Fromm2013} and \cite{ma:wu:zhang:15}), is convenient for determining whether a solution exists. A decoupling field describes the functional dependence of the backward part $Y$ on the forward component $X$. The decoupling field for the particular FBSDE \eqref{fbsde:base} is, roughly speaking, a function $u$ such that for all $s \in [0,1]$
\begin{align}\label{dec condi}
u(s, \X{1}{s}, \X{2}{s}, \X{3}{s} ) =& Y_s. 
\end{align} 
Under some nice conditions on the parameters of the FBSDE, there exists a maximal non-vanishing interval possessing a solution triplet $(X,Y,Z)$ and a decoupling field with nice regularity properties. The method of decoupling fields consists in analyzing the dynamics of the decoupling field's gradient in order to determine whether the FBSDE has a solution on the whole time interval $[0, 1]$.

We use the method of decoupling fields to prove that, under some suitable conditions on $\mu$, $\sigma$ and $g$, the FBSDE \eqref{fbsde:base} has a unique solution on $[0,1]$ for every initial value. By using the particular solution with initial value $(x^{(1)},x^{(2)},x^{(3)}) = 0$, we then construct a weak solution of the SDE \eqref{sde:base} and a stopping time $\tau$ embedding $\nu$. Indeed, the second component $X^{(2)}$ of the forward part in \eqref{fbsde:base} can be interpreted as a random time change. One can show that the time change is invertible, say with inverse clock $\gamma(t)$. Moreover, there exists a filtration $(\cG_t)$ and a $(\cG_t)$-Brownian motion $B$ such that, first, $X^{(2)}_1$ is a $(\cG_t)$-stopping time and, second, under the inverse clock the solution component $Y$ together with $B$ solve the SDE \eqref{sde:base} in a weak sense.  
By the very construction the time changed process $Y_{\gamma(\cdot)}$ at $X^{(2)}_1$ is equal to $g(W_1)$, and hence $X^{(2)}_1$ is a stopping time embedding $\nu$ into a weak solution of \eqref{sde:base}. 

In a further step we characterize the embedding stopping time $X^{(2)}_1$ in terms of a four dimensional Lipschitz SDE driven by the constructed Brownian motion $B$. The SDE establishes a mapping from the paths of $B$ to $X^{(2)}_1$, and hence allows to find stopping times embedding $\nu$ into {\itshape strong} solutions of the SDE \eqref{sde:base}. 

A major idea of our approach for solving the SEP is to change the time of a stochastic process that has the wanted distribution at the deterministic time $1$.  This idea goes back to Bass \cite{Bass83} who solves the SEP for Brownian motion. Indeed, our approach generalizes Bass's solution method. If $\mu$ is zero and $\sigma$ constant equal to one, then the component $X^{(3)}$ of \eqref{fbsde:base} vanishes and the solution part $Y$ of the backward equation coincides with the martingale of conditional expectations of $g(W_1)$, which is the process used by Bass. Moreover, the time change $X^{(2)}$ coincides with the quadratic variation of $Y$, the time change used in \cite{Bass83}. 

The time change idea has been employed in several further articles. In \cite{AHI08} the solution of a quadratic BSDE is time changed in order to solve the SEP for the Brownian motion with drift. The FBSDE \eqref{fbsde:base} simplifies to the BSDE of \cite{AHI08} if $A$ is a Brownian motion with drift. 
\cite{AHS15} uses a time change argument to construct stopping times embedding a given distribution into a stochastic process solving a homogeneous SDE. In \cite{FIP15} a fully coupled FBSDE is solved and then time changed to obtain a stopping time embedding a distribution into a Gaussian process satisfying an SDE with deterministic coefficients. \cite{FIP15} also relies on the method for decoupling fields for proving existence of a solution of the FBSDE. 

There are more recent articles that are inspired by or related to Bass‘ time-change approach for solving the SEP for the Brownian motion. E.g.\ the article \cite{beiglbock2017measure} proves optimality of the Bass solution, among all solutions of the SEP for Brownian motion, for some minimization problems formulated in terms of associated measure-valued martingales. \cite{doering2017skorokhod} solve the SEP for a class of Levy processes via an analytic approach and by extending Bass’ time-change arguments. The process of conditional expectations of $g(X^{(1)}_1)$, used by Bass, is shown in \cite{Veraguas2019} to minimize a martingale transport problem.  

To the best of our knowledge there do not exist any articles that consider the SEP for general inhomogeneous diffusions of the type \eqref{sde:base}. There are various contributions to the SEP for homogeneous diffusions. The article \cite{PedersenPeskir2001} classifies the distributions that can be embedded into homogeneous diffusions. The survey \cite{obloj} collects results on the SEP, including results for homogeneous diffusions. We remark that in the homogeneous case where the coefficients of the SDE \eqref{sde:base} do not depend on time, the FBSDE \eqref{fbsde:base} can be decoupled. We explain this in Section \ref{sec:numerics} below.

The manuscript is organized as follows: In Section \ref{sec:fbsde} we present our main results. In Section \ref{sec:introDecFields} we  explain the decoupling fields technique. In Sections \ref{sec:diffDecoupling} and \ref{sec:boundingDecoupling} we compute the dynamics of the decoupling field gradient process and derive some estimates allowing to conclude the existence of an FBSDE \eqref{fbsde:base} on the whole interval. In Sections \ref{sec:weakSolution} and \ref{sec:strongSolution} we present the weak and strong solution for the SEP. Illustrative numerical results can be found in Section \ref{sec:numerics}.

%%%%%%%%%%%%%%%%%%%%%%%%%%%%%%%%%%%%%%%%%%%%%%%%%%%%%%%%%%%%%%
%%%% \END SECTION
%%%%%%%%%%%%%%%%%%%%%%%%%%%%%%%%%%%%%%%%%%%%%%%%%%%%%%%%%%%%%%
%%%%%%%%%%%%%%%%%%%%%%%%%%%%%%%%%%%%%%%%%%%%%%%%%%%%%%%%%%%%%%
%%%%%%%%%%%%%%%%%%%%%%%%%%%%%%%%%%%%%%%%%%%%%%%%%%%%%%%%%%%%%%
%
%
%

%
%
%
%%%%%%%%%%%%%%%%%%%%%%%%%%%%%%%%%%%%%%%%%%%%%%%%%%%%%%%%%%%%%%
%%%%%%%%%%%%%%%%%%%%%%%%%%%%%%%%%%%%%%%%%%%%%%%%%%%%%%%%%%%%%%
%%%%%%%%%%%%%%%%%%%%%%%%%%%%%%%%%%%%%%%%%%%%%%%%%%%%%%%%%%%%%%
%%%% \BEGIN SECTION
%%%%%%%%%%%%%%%%%%%%%%%%%%%%%%%%%%%%%%%%%%%%%%%%%%%%%%%%%%%%%%
\section{Main results}
\label{sec:fbsde}

Our goal is to solve the Skorokhod embedding problem (\SEP) for a stochastic process $A$ solving the SDE \eqref{sde:base}.  
More precisely, for a given probability measure $\nu$ on $\IR$ we aim at finding an integrable stopping time $\tau$ and a real $a$ such that the solution $A$ of \eqref{sde:base}, in a weak or strong sense, with intial condition $A_0 = a$ fulfills $A_\tau \sim \nu$.
Let $F_\nu$ be the cumulative distribution function of $\nu$. We set
\begin{equation*}
g:=g_\nu:=F^{-1}_\nu \circ \Phi,
\end{equation*}
where $\Phi$ is the cumulative distribution function of the standard normal distribution and $F^{-1}_\nu$ the right-continuous generalized inverse of $F_\nu$. In the following, for a differentiable function $f:\IR^n \to \IR$ we denote by $\diff{x_i}{f}$ its partial derivate with respect to the $i$th coordinate.

\begin{assumption} 
\label{asump:conditionsU1bounded}
Let $g$, $\mu$ and $\sigma$ be differentiable, $\sigma \geq \epsilon > 0$ and $g'$, $\frac{\mu}{\sigma^2}$, $\frac{\diff{t}{\mu}}{\sigma^2}$, $\frac{\diff{a}{\mu}}{\sigma^2}$, $\frac{\diff{t}{\sigma}}{\sigma}$ as well as $\frac{\diff{a}{\sigma}}{\sigma}$ be bounded. Furthermore, let
\begin{equation}\label{inequ1 assu}
%\inf_{(\theta,x)\in \IR_+ \times \IR} \partial_a \left( \frac{ \mu }{\sigma^2} \right) (\theta, x) = 
\inf_{(\theta,x)\in \IR_+ \times \IR} \frac{ \sigma \cdot \diff{a}{\mu} - 2 \diff{a}{\sigma} \cdot \mu }{\sigma^3}(\theta, x) > - \frac{1}{ 2 \Vert g' \Vert_\infty^2}
\end{equation}
and one of the following conditions be satisfied:
\begin{enumerate}[i)]
\item $\diff{a}{\sigma} \equiv 0$
\item $\diff{a}{\sigma} \geq 0$, $2\diff{t}{\sigma} \cdot \mu  - \sigma \cdot \diff{t}{\mu} \geq 0$ \ or
\item $\diff{a}{\sigma} \leq 0$, $2\diff{t}{\sigma} \cdot \mu - \sigma \cdot \diff{t}{\mu} \leq 0$.
\end{enumerate}
\end{assumption}

Our main results are the following theorems. 

\begin{theorem}\label{thm:mainResultWeak}
Let Assumption \ref{asump:conditionsU1bounded} be satisfied. Then there exists a complete filtered probability space $(\Omega, \cF, (\cG_t)_{t \ge 0}, \IP)$, a $(\cG_t)$-Brownian motion $(B_t)$, a bounded $(\cG_t)$-stopping time $\tau$ and a real number $a$ such that for the strong solution $A$ of the SDE \eqref{sde:base} with driving Brownian motion $B$ and initial condition $A_0 = a$ we have $A_\tau \sim \nu$. 
Furthermore, $\tau$ can be chosen such that
\begin{equation}
\label{est:tau}
\tau \leq \epsilon^{-2} \left( \frac{1}{\Vert g' \Vert_\infty ^2} + 2 \min \left\{0, \inf_{ (\theta,x) \in \IR_+ \times \IR } \left( \frac{ \sigma \cdot \diff{a}{\mu} - 2 \diff{a}{\sigma} \cdot \mu }{\sigma^3} \right) (\theta,x) \right\} \right)^{-1} \quad a.s.
\end{equation}
\end{theorem}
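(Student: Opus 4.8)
The plan is to route the whole statement through the fully coupled FBSDE \eqref{fbsde:base} with initial datum $(x^{(1)},x^{(2)},x^{(3)})=0$ and to read off $\tau$, $B$ and $A$ from its solution by a time change. The first and decisive step is to show that, under Assumption~\ref{asump:conditionsU1bounded}, the FBSDE \eqref{fbsde:base} possesses a unique solution $(X^{(1)},X^{(2)},X^{(3)},Y,Z)$ on the \emph{entire} interval $[0,1]$, carried by a decoupling field $u$ with $u(s,X^{(1)}_s,X^{(2)}_s,X^{(3)}_s)=Y_s$ as in \eqref{dec condi}. Here I would invoke the method of decoupling fields: it furnishes a maximal interval $(t_0,1]$ on which a regular decoupling field exists, and everything reduces to proving $t_0=0$, i.e.\ to excluding gradient blow-up as $s$ decreases. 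Since $X^{(1)}_s=x^{(1)}+W_s$ carries the only nonzero diffusion coefficient (equal to $1$) while $X^{(2)},X^{(3)}$ are of finite variation, one has $Z_s=\diff{x^{(1)}}{u}(s,X_s)$, so the quantity to control is the first gradient $\diff{x^{(1)}}{u}$. I would derive its (Riccati-type) backward dynamics from the coefficients of \eqref{fbsde:base} and show that the inequality \eqref{inequ1 assu}, together with one of the sign conditions i)--iii), prevents the gradient from exploding, yielding the uniform a priori bound $(\diff{x^{(1)}}{u})^2\le C$ with $C:=\big(\Vert g'\Vert_\infty^{-2}+2\min\{0,\inf(\cdots)\}\big)^{-1}$, the reciprocal of the positive quantity appearing in \eqref{est:tau}. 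This bound forces $t_0=0$. I expect this gradient analysis -- setting up the coupled gradient equations and extracting the sharp constant $C$ -- to be the main obstacle; condition \eqref{inequ1 assu} guarantees that $C$ is finite and positive, while the sign conditions i)--iii) are exactly what is needed to control an otherwise sign-indefinite term arising from the time dependence of $\mu$ and $\sigma$.

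Granted global solvability, the second step is the time change. Because $\sigma\ge\epsilon>0$, the process $X^{(2)}_s=\int_0^s Z_r^2/\sigma^2(X^{(2)}_r,Y_r+X^{(3)}_r)\,dr$ is continuous and nondecreasing; let $\gamma$ be its right-continuous inverse, set $\tau:=X^{(2)}_1$ and $A_t:=(Y+X^{(3)})_{\gamma(t)}$ (well defined since $Y+X^{(3)}$ is constant on the flat intervals of $X^{(2)}$, where $Z=0$ and hence $\d Y=Z\,\d W=0$ and $\d X^{(3)}=0$). From \eqref{fbsde:base} one has $\d(Y_s+X^{(3)}_s)=Z_s\,\d W_s+\mu(X^{(2)}_s,Y_s+X^{(3)}_s)\,\frac{Z_s^2}{\sigma^2(\cdots)}\,\d s$, and the substitution $t=X^{(2)}_s$ converts the finite-variation part into $\int_0^t\mu(r,A_r)\,\d r$ and the bracket into $\langle A\rangle_t=\int_0^t\sigma^2(r,A_r)\,\d r$. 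By Lévy's characterisation there is a completed filtration $(\cG_t)$ and a $(\cG_t)$-Brownian motion $B$ with $\d A_t=\mu(t,A_t)\,\d t+\sigma(t,A_t)\,\d B_t$, so $A$ solves \eqref{sde:base} with $A_0=Y_0=:a$, and $\tau$ is a $(\cG_t)$-stopping time because $\{\tau\le t\}=\{\gamma(t)\ge 1\}\in\cG_t$. Invoking pathwise uniqueness of the one-dimensional SDE \eqref{sde:base} (valid under Assumption~\ref{asump:conditionsU1bounded}, as $\sigma\ge\epsilon$ is bounded away from $0$ and the coefficients are regular), $A$ is the \emph{strong} solution driven by $B$. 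Finally $\gamma(\tau)=1$ yields $A_\tau=Y_1+X^{(3)}_1=g(X^{(1)}_1)=g(W_1)\sim\nu$, so $\tau$ embeds $\nu$.

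It remains to verify the quantitative bound \eqref{est:tau}. Using $\sigma\ge\epsilon$ pathwise, $\tau=X^{(2)}_1=\int_0^1 Z_r^2/\sigma^2(\cdots)\,\d r\le\epsilon^{-2}\int_0^1 Z_r^2\,\d r$, and substituting the pointwise estimate $Z_r^2=(\diff{x^{(1)}}{u})^2\le C$ from the first step gives $\tau\le\epsilon^{-2}C$, which is precisely \eqref{est:tau}. Thus the time bound is a free by-product of the very a priori estimate that drives global existence, and requires no argument beyond the gradient analysis of the first step.
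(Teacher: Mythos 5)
Your proposal is correct and follows essentially the same route as the paper: global solvability of FBSDE \eqref{fbsde:base} on $[0,1]$ via the decoupling-field gradient bound with the sharp constant (Sections \ref{sec:diffDecoupling}--\ref{sec:boundingDecoupling}, culminating in Theorem \ref{thm:U1isBounded}), then the time change $\gamma$ inverse to $X^{(2)}$, the process $A_t = Y_{\gamma(t)} + X^{(3)}_{\gamma(t)}$, a Brownian motion $B$ obtained from the martingale part of $A$ and identified via L\'evy's characterisation, the identity $A_\tau = Y_1 + X^{(3)}_1 = g(W_1) \sim \nu$, and the bound $\tau = X^{(2)}_1 \le \epsilon^{-2}\Vert Z \Vert_\infty^2$ (Theorem \ref{thm:weakSolution}). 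Every ingredient you name---$Z_s = \partial_{x_1}u(s,X_s)$, the Riccati-type gradient dynamics, the use of \eqref{inequ1 assu} together with the sign conditions i)--iii) to prevent blow-up, and the constancy of $Y + X^{(3)}$ on flat intervals of $X^{(2)}$---matches the paper's own argument, with your explicit appeal to pathwise uniqueness merely making precise a step the paper leaves implicit.
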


\begin{remarks}
In the following we refer to the tupel $\big( (\cG_t), (B_t), \tau, a \big)$ as a weak solution of the \SEP.
\end{remarks}

\begin{theorem}\label{thm:mainResultStrong}
Let Assumption \ref{asump:conditionsU1bounded} be satisfied and assume furthermore that $\sigma$, $\frac{1}{g'}$ the first, second and third derivatives of $g$, $\mu$ and $\sigma$ are bounded.
Let $B$ be a Brownian motion on a probability space $(\Omega, \cF, P)$ and denote by $(\cF_t)$ the augmented Brownian filtration. Then there exists $a \in \IR$ and a bounded $(\cF_t)$-stopping time $\tau$ satisfying \eqref{est:tau} such that for the strong solution $A$ of the SDE \eqref{sde:base} with driving Brownian motion $B$ and initial condition $A_0 = a$ we have $A_\tau \sim \nu$. 
\end{theorem}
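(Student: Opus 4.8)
The strategy is to turn the weak construction behind Theorem~\ref{thm:mainResultWeak} around: instead of \emph{producing} a Brownian motion from a time change, I take the time-changed system as a genuine SDE and solve it \emph{strongly} against the prescribed $B$. Concretely, let $(X,Y,Z)$ be the solution of the FBSDE \eqref{fbsde:base} with initial datum $(\x{1},\x{2},\x{3})=0$ and let $u$ be its decoupling field, so that \eqref{dec condi} holds. Since $\X{1}{}$ is the only component carrying a diffusion term, and that term has unit coefficient, Itô's formula applied to $Y_s=u(s,\X{1}{s},\X{2}{s},\X{3}{s})$ identifies the control as the first spatial derivative of the decoupling field, $Z_s=\diff{\x{1}}{u}(s,\X{1}{s},\X{2}{s},\X{3}{s})$. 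This is the key structural fact: once $u$ and $\diff{\x{1}}{u}$ are available as bounded Lipschitz functions, the FBSDE collapses into a forward system, and this remains true after a time change.

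The first, and decisive, step is to upgrade the regularity of $u$. Under the additional boundedness hypotheses of the theorem --- $\sigma$ bounded together with the first three derivatives of $g$, $\mu$, $\sigma$, and $1/g'$ bounded --- the decoupling-field estimates of Sections~\ref{sec:diffDecoupling} and~\ref{sec:boundingDecoupling} should yield that $u$ is of class $C^{1,2}$ on $[0,1]\times\IR^3$ with bounded derivatives and a globally Lipschitz gradient $\diff{\x{1}}{u}$. The hypothesis that $1/g'$ is bounded plays a distinguished role: the terminal gradient equals $g'$, which is then bounded away from zero, and propagating this lower bound backwards through the gradient dynamics gives $\diff{\x{1}}{u}\ge c>0$ on all of $[0,1]$. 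Hence $Z$ is bounded away from zero, the clock $\X{2}{}$ is strictly increasing with a Lipschitz inverse $\gamma$, and the maps $1/Z$ and $\sigma/|Z|$ are bounded and Lipschitz.

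Now I time-change to the clock of $\X{2}{}$. Writing $A_t:=Y_{\gamma_t}+\X{3}{\gamma_t}$, $\widetilde X^{(1)}_t:=\X{1}{\gamma_t}$, $\widetilde X^{(3)}_t:=\X{3}{\gamma_t}$ and $\zeta_t:=\diff{\x{1}}{u}(\gamma_t,\widetilde X^{(1)}_t,t,\widetilde X^{(3)}_t)$, the relations $\X{2}{\gamma_t}=t$ and $\d\X{2}{s}=\tfrac{Z_s^2}{\sigma^2}\,\d s$ transform the dynamics into the four-dimensional system
\begin{align*}
\d \gamma_t &= \frac{\sigma^2(t,A_t)}{\zeta_t^2}\,\d t, &
\d \widetilde X^{(1)}_t &= \frac{\sigma(t,A_t)}{\zeta_t}\,\d B_t, \\
\d \widetilde X^{(3)}_t &= \mu(t,A_t)\,\d t, &
\d A_t &= \mu(t,A_t)\,\d t + \sigma(t,A_t)\,\d B_t,
\end{align*}
driven by a single Brownian motion $B$. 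By the previous step its coefficients are globally Lipschitz with linear growth, so I reverse the logic: I take this system as the defining SDE, drive it by the \emph{given} Brownian motion $B$ on $(\Omega,\cF,P)$ with initial value $(0,0,0,a)$ where $a:=u(0,0,0,0)$, and invoke the standard Lipschitz existence-and-uniqueness theorem to obtain a unique strong solution adapted to the augmented filtration $(\cF_t)$. The last line shows that $A$ is the strong solution of \eqref{sde:base} driven by $B$ with $A_0=a$. Setting $\tau:=\inf\{t\ge 0:\gamma_t=1\}$ produces a bounded $(\cF_t)$-stopping time, which inherits the a.s.\ deterministic bound \eqref{est:tau} from the weak construction. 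For the embedding, the terminal condition $u(1,\x{1},\x{2},\x{3})=g(\x{1})-\x{3}$ combined with the preserved identity $A_t=u(\gamma_t,\widetilde X^{(1)}_t,t,\widetilde X^{(3)}_t)+\widetilde X^{(3)}_t$ (itself verified by Itô's formula, the PDE characterising $u$, and pathwise uniqueness) gives $A_\tau=g(\widetilde X^{(1)}_\tau)$ pathwise; and since $\d\langle\widetilde X^{(1)}\rangle_t=\sigma^2(t,A_t)/\zeta_t^2\,\d t=\d\gamma_t$, we get $\langle\widetilde X^{(1)}\rangle_\tau=\gamma_\tau=1$, so by Dambis--Dubins--Schwarz $\widetilde X^{(1)}_\tau\sim N(0,1)$ and hence, by the choice of $g$, $A_\tau\sim\nu$.

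I expect the main obstacle to be the regularity upgrade of the second step: showing that the additional derivative bounds force $u\in C^{1,2}$ with bounded and Lipschitz gradient and, above all, that the non-degeneracy $\diff{\x{1}}{u}\ge c>0$ survives on the whole of $[0,1]$. This lower bound is exactly what renders the drift $\sigma^2/\zeta^2$ and diffusion $\sigma/\zeta$ of the time-changed system non-singular and Lipschitz; without it strong solvability, and hence the $(\cF_t)$-measurability of $\tau$, would break down. Establishing the propagation of this lower bound through the gradient dynamics --- using precisely the sign conditions (i)--(iii) and the bound \eqref{inequ1 assu} of Assumption~\ref{asump:conditionsU1bounded} --- is the technical heart of the argument.
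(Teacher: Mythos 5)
Your overall strategy is the same as the paper's: upgrade the regularity of the decoupling field so that $\partial_{x_1}u$ is globally Lipschitz and bounded away from zero, time-change the FBSDE into a four-dimensional SDE with Lipschitz coefficients driven by the \emph{given} Brownian motion $B$ (your system is, up to the linear substitution $(\Theta,\Delta)\leftrightarrow(A-\widetilde X^{(3)},\widetilde X^{(3)})$, exactly the paper's system \eqref{sys:systemTau}), solve it strongly, and take $\tau=\inf\{t\ge 0:\gamma_t=1\}$ with the bound \eqref{est:tau}. This is precisely Theorem \ref{thm:stoppingB:strongSolution} combined with Proposition \ref{prop:conditionsStronSolution}. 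As a proof, however, the proposal has two genuine gaps, both located at the points you yourself flag as the ``technical heart''.

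First, the regularity upgrade is asserted (``should yield''), not proved, and it constitutes the bulk of the paper's work. Boundedness and spatial Lipschitz continuity of $\partial_{x_1}u$ are obtained by extending FBSDE \eqref{fbsde:base} by the gradient dynamics to the system \eqref{align:fbsde:secondOrder} and showing that this extended FBSDE again admits a global decoupling field (Lemma \ref{lemma:fbsdeSecondOrderBounded}); Lipschitz continuity \emph{in time} requires yet another extension controlling third derivatives (Lemma \ref{lemma:UthirdTimeDiff}) together with the bound on $Z^{(1)}$ (Lemma \ref{lemma:Z1Bounded}) and the estimates in Proposition \ref{prop:conditionsStronSolution}; and the lower bound $\partial_{x_1}u\ge\delta>0$ is not a soft ``propagation'' of $g'\ge c$ through the gradient dynamics --- those dynamics contain quadratic terms in the unknowns, so no naive comparison applies --- but is proved in Lemma \ref{lemma:u1GreaterZero} by deriving a BSDE for $1/\partial_{x_1}u$ and invoking Kobylanski's estimate. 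Second, your closing argument rests on the ``preserved identity'' $A_t=u(\gamma_t,\widetilde X^{(1)}_t,t,\widetilde X^{(3)}_t)+\widetilde X^{(3)}_t$, verified ``by It\^o's formula and the PDE characterising $u$''. That requires $u$ to be a classical $C^{1,2}$ solution of the associated quasilinear PDE, which neither your proposal nor the paper establishes: the paper only obtains bounded, Lipschitz \emph{weak} derivatives, and consequently never applies It\^o's formula to $u$ (even the identification $Z_s=\partial_{x_1}u(s,X_s)$ is proved by a difference-quotient argument in Lemma \ref{lemma:ZequalsU1}, not by It\^o). The paper's substitute is a transfer-of-law argument: Lemma \ref{lemma:localLipschitzSystem} shows that the time-changed \emph{weak} solution solves the very same Lipschitz system driven by its own Brownian motion, and uniqueness in law for that Lipschitz SDE (the ``principle of causality'', a Yamada--Watanabe-type argument) then carries the property $A_\tau=g(W_1)\sim\nu$ over to the solution driven by the prescribed $B$. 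Your Dambis--Dubins--Schwarz step is correct once $A_\tau=g(\widetilde X^{(1)}_\tau)$ is known, but obtaining that identity pathwise from It\^o plus the PDE is exactly what the available regularity does not support; replacing it by the law-transfer argument (which your passing mention of ``pathwise uniqueness'' gestures at, but does not carry out) is the necessary repair.
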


\begin{remarks}
We refer to the pair $(\tau, a)$ as a strong solution of the \SEP.
\end{remarks}

\begin{remarks}
\label{remark:muBounded}
Note that the combination of Assumption \ref{asump:conditionsU1bounded} and $\sigma$ being bounded already implies that $\mu$ is bounded as well.
\end{remarks}

\begin{remarks}
\label{remark:InterpretingAssumptions}
We now comment on Assumption \ref{asump:conditionsU1bounded}. In particular, we relate the assumption to some conditions appearing in the literature that have been shown to be sufficient for a bounded solution of the SEP to exist. 
\begin{enumerate}[a)]
\item The assumption that $g'$ is bounded entails that there exists a compact set outside of which the tails of $\nu$ are dominated by the tails of  a normal distribution. If, as in Theorem \ref{thm:mainResultStrong}, we additionally have that $g'$ is bounded from below by a positive constant, then the tails of $\nu$ also dominate the tails of a normal distribution. For a precise statement, see Lemma \ref{lemma:appendixGMeans} in the appendix.

Furthermore, observe that the left hand side of Condition \eqref{inequ1 assu} is equal to $\partial_a \left( \frac{\mu}{\sigma^2} \right)$ and in the cases $ii)$ and $iii)$ the term $2\diff{t}{\sigma} \cdot \mu  - \sigma \cdot \diff{t}{\mu}$ equals $-\sigma^3 \partial_t \left( \frac{\mu}{\sigma^2} \right)$; hence Assumption \ref{asump:conditionsU1bounded} imposes conditions on the growth of $\frac{\mu}{\sigma^2}$.

\item
Theorem  3.1 in \cite{AS11} states that the boundedness of $g'$ is sufficient for the SEP for the BM, possibly with a constant drift, to possess a bounded solution. Notice that for $\sigma \equiv 1$ and constant $\mu$ Inequality \eqref{est:tau} simplifies to 
\begin{align*}
\tau \le \| g'\|_\infty^2,
\end{align*}
and hence coincides with the estimate on the embedding stopping time provided in Theorem 3.1 in \cite{AS11}. Moreover, observe that if $\sigma$ and $\mu$ are constant, then all the other properties of Assumption \ref{asump:conditionsU1bounded} are satisfied trivially. 

\item The ratio on the left-hand side of \eqref{inequ1 assu} is equal to $\partial_a \left( \frac{\mu}{\sigma^2} \right)$. Thus, \eqref{inequ1 assu} is somewhat weaker than requiring that $\frac{\mu}{\sigma^2}$ is non-decreasing in $x$. 
For some mean-reversion processes, e.g.\ the Ornstein-Uhlenbeck process, $\partial_a \left( \frac{\mu}{\sigma^2} \right)$ is unbounded from below. A mean reversion effect can imply that at any time the tails of the diffusion $A$ are lighter than the tails of $\nu$; in this case $\nu$ can not be embedded into $A$ in bounded time.  

A condition related to \eqref{inequ1 assu} appears in Theorem 6 of the article \cite{AHS15} studying the SEP in the special case where $\mu$ and $\sigma$ do only depend on $x$. The theorem states that if $-\frac{2\mu}{\sigma} + \sigma'$ is non-increasing and $\frac{g'}{\sigma(g)}$ is bounded, then there exists a bounded solution of the SEP. Note that if, in addition, $\sigma$ is constant, the assumption of Theorem 6, \cite{AHS15}, coincides with our Assumption \ref{asump:conditionsU1bounded}. 

\item In \cite{FIP15} the authors consider the special case when $\mu,\sigma$ do not depend on $a$, but on time only. To obtain weak solutions for the SEP using the FBSDE approach the authors of that work assume that $\sigma$ is bounded away from zero as well as that $g'$ and $\delta'$ are bounded, where $\delta'(r)=\frac{\mu(H^{-1}(r))}{\sigma^2(H^{-1}(r))}$ and where $H^{-1}$ is the inverse of the mapping $t\mapsto\int_0^t\sigma^2(s) ds$. This boundedness of $\delta'(r)$ is equivalent to our assumption that $\frac{\mu}{\sigma^2}$ is bounded. %Also, Assumption \ref{asump:conditionsU1bounded} includes boundedness of $g'$ and $\frac{1}{\sigma}$ as well.
\end{enumerate}
\end{remarks}

In order to derive Theorem \ref{thm:mainResultWeak} and \ref{thm:mainResultStrong} we consider the FBSDE \eqref{fbsde:base}. To this end let $W$ be a Brownian motion on a probability space $(\Omega, \cF, \IP)$ and denote by $(\cF_t)_{t \ge 0}$ the associated augmented Brownian filtration. In Section \ref{sec:diffDecoupling} and \ref{sec:boundingDecoupling} we show that under Assumption \ref{asump:conditionsU1bounded} there exists a unique solution of the FBSDE \eqref{fbsde:base} with initial condition $(\X{1}{0}, \X{2}{0}, \X{3}{0}) = ( 0,0,0 )$. 
We then use this solution and a time transformation to prove Theorem \ref{thm:mainResultWeak} (see Section \ref{sec:weakSolution} and in particular Theorem \ref{thm:weakSolution}). 
More precisely, we construct a filtration $(\cG_t)$, a $(\cG_t)$-Brownian motion $(B_t)$, a bounded $(\cG_t)$-stopping time $\tau$ and find a real number $a$ such that for the strong solution $A$ of the SDE \eqref{sde:base} with driving Brownian motion $B$ and initial condition $A_0 = a$ we have $A_\tau \sim \nu$. 

In order to find a strong solution of the \SEP, we transform the FBSDE \eqref{fbsde:base} via a time change into an SDE driven by the new Brownian motion $B$. The new SDE allows to characterize the stopping time $\tau$ as a path functional of $B$, and hence to prove Theorem \ref{thm:mainResultStrong} \ (see Section \ref{sec:strongSolution} and in particular Theorem \ref{thm:stoppingB:strongSolution} and Proposition \ref{prop:conditionsStronSolution}).

In Section \ref{sec:numerics} we show that solving the system
\begin{align}
W_s =&  \int_0^s \frac{\sigma ( \X{2}{r}, Y_r + \X{3}{r} ) }{ Z_r } \d B_{\X{2}{r}} \nonumber\\
\X{2}{s} =& \int _0 ^s \frac{Z_r^2}{\sigma^2 ( \X{2}{r}, Y_r + \X{3}{r} ) } \d r \nonumber\\
\X{3}{s} =& \int_0 ^s \mu (\X{2}{r}, Y_r + \X{3}{r}) \frac{Z_r^2}{\sigma^2 ( \X{2}{r}, Y_r + \X{3}{r} ) } \d r \nonumber\\
Y_s =& g( W_1 ) - \X{3}{1} - \int _s ^1 Z_r \d W_r  \label{sys:numscheme}
\end{align}
for all $s \in [0,1]$ and setting $\tau := \X{2}{1}$ also yields a strong solution. Furthermore, we propose a scheme, based on the system  \eqref{sys:numscheme}, to numerically simulate a solution of the SEP (see Section \ref{sec:numerics}). %However, we only give a special case where the convergence of our scheme is known and leave the general proof for future research.

In the next section we recall some facts concerning decoupling fields and explain the method we use for proving the existence of a unique solution for the FBSDE \eqref{fbsde:base}.

\section{The method of decoupling fields}
\label{sec:introDecFields}

In this section we briefly summarize the key results of the abstract theory of Markovian decoupling fields, we rely on later in the paper. The presented theory is derived from the SLC theory (standing for Standard Lipschitz Conditions) of Chapter 2 of \cite{Fromm15} and is proven in \cite{FIP15}.

We consider families $(M,\Sigma,f)$ of measurable functions, more precisely
$$ M: [0,T]\times\IR^n\times\IR^m\times\IR^{m\times d}\longrightarrow \IR^n, $$
$$ \Sigma: [0,T]\times\IR^n\times\IR^m\times\IR^{m\times d}\longrightarrow \IR^{n\times d}, $$
$$ f: [0,T]\times\IR^n\times\IR^m\times\IR^{m\times d}\longrightarrow \IR^m, $$
where $n,m,d\in\IN$ and $T>0$. Let further $(\Omega,\cF,\IP)$ be a probability space with a $d$-dimensional Brownian motion $(W_t)_{t\in[0,T]}$ and denote by $(\cF_t)_{t \in [0,T]}$ the augmented Brownian filtration.

For $x \in \IR^n$ and measurable $\xi : \IR^n \to \IR^m$ we consider the FBSDE
\begin{align*}
X_t &= x + \int_0^t M (s, X_s, Y_s, Z_s ) \d s + \int_0^t \Sigma (s, X_s, Y_s, Z_s) \d W_s \\
Y_t &= \xi ( X_T ) + \int_t^T f (s, X_s, Y_s, Z_s ) \d s - \int_t^T Z_s \d W_s.
\end{align*}
The aim is to study existence and uniqueness properties of the above FBSDE. The basic idea is to find a ''good'' function $u$ such that $Y_t = u(t,X_t)$, thereby establishing a pathwise relation between the processes $X$ and $Y$.

Note that contrary to Chapter 2 of \cite{Fromm15} we allow deterministic mappings $M,\Sigma,f$ and $\xi:\IR^n\rightarrow\IR^m$ only. In this, so-called Markovian, case we can somewhat relax the Lipschitz continuity assumptions of Chapter 2 of \cite{Fromm15} and still obtain local existence together with uniqueness. What makes the Markovian case so special is the property
$$"Z_s=u_x(s,X_s)\cdot\Sigma(s,X_s,Y_s,Z_s)"$$
which comes from the fact that $u$ will also be deterministic. This property allows us to bound $Z$ by a constant if we assume that $\Sigma$ and $u_x$ are bounded.
This boundedness of $Z$ in the Markovian case motivates the following definition, which allows to develop a theory for non-Lipschitz problems.

For a stochastic process $A:\Omega \times I \to \IR^N$, where $I$ is an interval in $[0,T]$ and $N \in \IN$, we introduce the norm
\begin{equation*}
\Vert A \Vert_{\infty, I} := \esssup \limits _{(s,\omega) \in I \times \Omega} \vert A_s(\omega) \vert
\end{equation*}
with regard to the product measure $\lambda \times \IP$ and for a function $f:I \times \IR^N \to \IR^M$ with $N,M \in \IN$ we define 
\begin{equation*}
\Vert f \Vert_{\infty,I} := \sup_{s \in I} \sup_{x \in \IR^N} |f(s,\cdot) \vert.
\end{equation*}
We simply write $\|A\|_{\infty,t_1}$ and $\|f\|_{\infty,t_1}$ if $I = [t_1, T]$ and $\|A\|_{\infty}$ and $\|f\|_{\infty}$ if $I=[0,T]$.

\begin{definition}
Let $\xi:\IR^n\rightarrow\IR^m$ be measurable and let $t\in[0,T]$.
We call a function $u:[t,T]\times\IR^n\rightarrow\IR^m$ with $u(T,\cdot)=\xi$ a \emph{Markovian decoupling field} for $\fbsde (\xi,(M,\Sigma,f))$ on $[t,T]$ if for all $t_1,t_2\in[t,T]$ with $t_1\leq t_2$ and any $\cF_{t_1}$ - measurable $X_{t_1}:\Omega\rightarrow\IR^n$ there exist progressive processes $X,Y,Z$ on $[t_1,t_2]$ such that
\begin{itemize}
\item $X_s=X_{t_1}+\int_{t_1}^sM(r,X_r,Y_r,Z_r)\ds r+\int_{t_1}^s\Sigma(r,X_r,Y_r,Z_r)\ds W_r$ a.s.,
\item $Y_s=Y_{t_2}-\int_{s}^{t_2}f(r,X_r,Y_r,Z_r)\ds r-\int_{s}^{t_2}Z_r\ds W_r$ a.s.,
\item $Y_s=u(s,X_s)$ a.s.
\end{itemize}
for all $s\in[t_1,t_2]$ and such that $\|Z\|_{\infty ,[t_1,t_2]}<\infty$ holds. In particular, we want all integrals to be well-defined and $X,Y,Z$ to have values in $\IR^n$, $\IR^m$ and $\IR^{m\times d}$ respectively. \\
Furthermore, we call a function $u:(t,T]\times\IR^n\rightarrow\IR^m$ a Markovian decoupling field for \linebreak $\fbsde(\xi,(M,\Sigma,f))$ on $(t,T]$ if $u$ restricted to $[t',T]$ is a Markovian decoupling field for all $t'\in(t,T]$.

We refer to the stated property that $Y_s = u(s,X_s)$ a.s.\ as the decoupling condition.
\end{definition}

In the following we work with weak derivatives. This allows us to obtain variational differentiability (i.e.\ w.r.t.\ the initial value $x\in\IR^n$) of the processes $X,Y,Z$ for Lipschitz (or locally Lipschitz) continuous $M,\Sigma,f,\xi$.  We start by fixing notation and giving some definitions:

If $x\in\IR^{m\times d}$ or $x\in\IR^{n\times d}$, the expression $|x|$ denotes the Frobenius norm of the linear operator $x$, i.e.\ the square root of the sum of the squares of its matrix coefficients. \\
We denote by $S^{n-1}:=\{x\in\IR^n\,|\,|x|=1\}$\index{$S^{n-1}$} the $(n-1)$ - dimensional sphere. If $x\in\IR^{n\times n}$ or $x\in\IR^{m\times n}$ or $x\in\IR^{m \times d \times n}$ or $x\in\IR^{n \times d \times n}$, we define $|x|_v:=|x\cdot v|$ for all $v\in S^{n-1}$, where $\cdot$ is the application of the linear operator $x$ to the vector $v$ such that $x\cdot v$ is in $\IR^{n}$ or $\IR^{m}$ or $\IR^{m \times d}$ or $\IR^{n \times d}$ respectively. We refer to $\sup_{v\in S^{n-1}}|x|_v$ as the operator norm of $x$.

For a measurable map $\xi: \IR^n\rightarrow \IR^m$ we define
$$L_{\xi}\index{$L_{\xi}$}:=\inf\left\{L\geq 0\,|\,|\xi(x)-\xi(x')|\leq L|x-x'|\textrm{ for all }x,x'\in\IR^n\right\}, $$
where $\inf \emptyset:=\infty$. We also set $L_{\xi}:=\infty$ if $\xi$ is not measurable. $L_{\xi}<\infty$ implies that $\xi$ is Lipschitz continuous. For a map $u: [t,T]\times\IR^n\rightarrow \IR^m$ we define $L_{u,x}:=\sup_{s\in[t,T]}L_{u(s,\cdot)}$. 

Now, consider a mapping $X:\cM\times\Lambda\rightarrow\IR$, where $(\cM,\cA,\rho)$ is some measure space with finite measure $\rho$ and $\Lambda\subseteq\IR^N$ is open, $N\in\IN$. We say that $X$ is \emph{weakly differentiable}\index{weakly differentiable} w.r.t. the parameter $\lambda\in\Lambda$, if for almost all $\omega\in\cM$ the mapping $X(\omega,\cdot):\Lambda\rightarrow\IR$ is weakly differentiable. This means that there exists a mapping $\diff{\lambda}{X}:\cM\times\Lambda\rightarrow\IR^{1\times N}$ such that
\begin{equation}\label{weakderidefi}
 \int_{\Lambda}\varphi(\lambda)\diff{\lambda}{X}(\omega,\lambda)\ds\lambda=
-\int_{\Lambda}X(\omega,\lambda)\diff{\lambda}{\varphi}(\lambda)\ds\lambda,
\end{equation}
for any real valued test function $\varphi\in C^{\infty}_c(\Lambda)$, for almost all $\omega\in\cM$. In particular, $X(\omega,\cdot)$ and the \emph{weak derivative}\index{weak derivative} $\diff{\lambda}{X}(\omega,\cdot)$ have to be locally integrable for a.a. $\omega$. This of course includes measurability w.r.t. $\lambda$ for almost every \emph{fixed} $\omega$. 

We remark that weak differentiability for vector valued mappings is defined component-wise. We refer to Section 2.1.2 of \cite{Fromm15} for more on weak derivatives. 

Note that if $L_{u,x}<\infty$ is satisfied and, therefore, $u$ is Lipschitz continuous in $x$ then $u$ is weakly differentiable in $x$ (see e.g.\ Lemma A.3.1.\ of \cite{Fromm15}) and even classically differentiable almost everywhere. If not otherwise specified we refer to $\diff{x}{u}:[t,T]\times\IR^n\rightarrow \IR^{m\times n}$ as the particular version of the weak derivative which is identical to the classical derivative in all points for which a classical derivative exists and is zero in all other points. See for instance the statement and proof of Lemma A.3.1.\ of \cite{Fromm15} for details.

We denote by $L_{\Sigma,z}$ the Lipschitz constant of $\Sigma$ w.r.t. the dependence on the last component $z$ (and w.r.t. the Frobenius norms on $\IR^{m\times d}$ and $\IR^{n\times d}$), by which we mean the minimum of all Lipschitz constants or $\infty$ in case $\Sigma$ is not Lipschitz continuous in $z$.
In case $L_{\Sigma,z}<\infty$ we denote by $L_{\Sigma,z}^{-1}=\frac{1}{L_{\Sigma,z}}$ the value $\frac{1}{L_{\Sigma,z}}$ if $L_{\Sigma,z}>0$ and $\infty$ otherwise.

We write $\IE_{t,\infty}[X]$ for $\esssup\,\IE[X|\cF_t]$ in the following definition:

\begin{definition}
Let $u:[t,T]\times\IR^n\rightarrow\IR^m$ be a Markovian decoupling field to $\fbsde(\xi,(M,\Sigma,f))$. We call $u$ \emph{weakly regular}, if
$L_{u,x}<L_{\Sigma,z}^{-1}$ and $\sup_{s\in[t,T]}|u(s,0)|<\infty$.

Furthermore, we call a weakly regular $u$ \emph{strongly regular} if for all fixed $t_1,t_2\in[t,T]$, $t_1\leq t_2,$ the processes $X,Y,Z$ arising in the defining property of a Markovian decoupling field
are a.e. unique for each \emph{constant} initial value $X_{t_1}=x\in\IR^n$ and satisfy
\begin{equation}\label{STRongregul1}
\sup_{s\in [t_1,t_2]}\IE_{t_1,\infty}[|X_s|^2]+\sup_{s\in [t_1,t_2]}\IE_{t_1,\infty}[|Y_s|^2]
+\IE_{t_1,\infty}\left[\int_{t_1}^{t_2}|Z_s|^2\ds s\right]<\infty\quad\forall x\in\IR^n.
\end{equation}
 In addition $X, Y, Z$ must be measurable as functions of $(x,s,\omega)$ and
even weakly differentiable w.r.t. $x\in\IR^n$ such that for every $s\in[t_1,t_2]$ the mappings $X_s$ and $Y_s$ are measurable functions of $(x,\omega)$ and even weakly differentiable w.r.t. $x$ such that
\begin{multline}\label{STRongregul2}
\esssup_{x\in\IR^n}\sup_{v\in S^{n-1}}\sup_{s\in [t_1,t_2]}\IE_{t_1,\infty}\left[\left|\frac{\dn}{\dn x}X_s\right|^2_v\right]<\infty, \\
\esssup_{x\in\IR^n}\sup_{v\in S^{n-1}}\sup_{s\in [t_1,t_2]}\IE_{t_1,\infty}\left[\left|\frac{\dn}{\dn x}Y_s\right|^2_v\right]<\infty, \\
\esssup_{x\in\IR^n}\sup_{v\in S^{n-1}}\IE_{t_1,\infty}\left[\int_{t_1}^{t_2}\left|\frac{\dn}{\dn x}Z_s\right|^2_v\ds s\right]<\infty,
\end{multline}
where $S^{n-1}$ is the $(n-1)$ - dimensional sphere. \\
We say that a Markovian decoupling field $u$ on $[t,T]$ is \emph{strongly regular} on a subinterval \linebreak $[t_1,t_2]\subseteq[t,T]$ if $u$ restricted to $[t_1,t_2]$ is a strongly regular Markovian decoupling field for $\fbsde(u(t_2,\cdot),(M,\Sigma,f))$. \\
Furthermore, we say that a Markovian decoupling field $u:(t,T]\times\IR^n\rightarrow\IR^m$
\begin{itemize}
\item is weakly regular if $u$ restricted to $[t',T]$ is weakly regular for all $t'\in(t,T]$,
\item is strongly regular if $u$ restricted to $[t',T]$ is strongly regular for all $t'\in(t,T]$.
\end{itemize}
\end{definition}
For the following class of problems an existence and uniqueness theory is developed:
\begin{definition}
We say that $\xi,M,\Sigma,f$ satisfy \emph{modified local Lipschitz conditions (MLLC)} if
\begin{itemize}
\item $M,\Sigma,f$ are
\begin{itemize}
\item Lipschitz continuous in $x,y,z$ on sets of the form $[0,T]\times\IR^n\times\IR^{m} \times B$, where $B\subset \IR^{m\times d}$ is an arbitrary bounded set
\item and such that $\|M(\cdot,0,0,0)\|_\infty,\|f(\cdot,0,0,0)\|_{\infty},\|\Sigma(\cdot,\cdot,\cdot,0)\|_{\infty},L_{\Sigma,z}<\infty$,
\end{itemize}
\item $\xi: \IR^n\rightarrow \IR^m$ satisfies $L_{\xi}<L_{\Sigma,z}^{-1}$.
\end{itemize}
\end{definition}
The following natural concept introduces a type of Markovian decoupling field for non-Lipschitz problems (non-Lipschitz in $z$), to which nevertheless standard Lipschitz results can be applied.

\begin{definition}
Let $u$ be a Markovian decoupling field for $\fbsde(\xi,(M,\Sigma,f))$. We call $u$ \emph{controlled in $z$} if there exists a constant $C>0$ such that for all $t_1,t_2\in[t,T]$, $t_1\leq t_2$, and all initial values $X_{t_1}$, the corresponding processes $X,Y,Z$ from the definition of a Markovian decoupling field satisfy $|Z_s(\omega)|\leq C$,
for almost all $(s,\omega)\in[t,T]\times\Omega$. If for a fixed triple $(t_1,t_2,X_{t_1})$ there are different choices for $X,Y,Z$, then all of them are supposed to satisfy the above control.

We say that a Markovian decoupling field $u$ on $[t,T]$ is \emph{controlled in $z$} on a subinterval $[t_1,t_2]\subseteq[t,T]$ if $u$ restricted to $[t_1,t_2]$ is a Markovian decoupling field for $\fbsde(u(t_2,\cdot),(M,\Sigma,f))$ that is controlled in $z$.

Furthermore, we call a Markovian decoupling field on an interval $(s,T]$ \emph{controlled in $z$} if it is controlled in $z$ on every compact subinterval $[t,T]\subseteq (s,T]$ (with $C$ possibly depending on $t$).
\end{definition}

\begin{definition}
Let $I^{M}_{\mathrm{max}}\subseteq[0,T]$ for $\fbsde(\xi,(M,\Sigma,f))$ be the union of all intervals $[t,T]\subseteq[0,T]$ such that there exists a weakly regular Markovian decoupling field $u$ on $[t,T]$.
\end{definition}

\begin{theorem}[Existence and uniqueness on a maximal interval, Theorem 3.21 in \cite{FIP15}.]\label{GLObalexistM}
Let $M,\Sigma,f,\xi$ satisfy MLLC. Then there exists a unique weakly regular Markovian decoupling field $u$ on $I^M_{\mathrm{max}}$. This $u$ is also controlled in $z$, strongly regular and continuous. \\
Furthermore, either $I^{M}_{\mathrm{max}}=[0,T]$ or $I^{M}_{\mathrm{max}}=(t^{M}_{\mathrm{min}},T]$, where $0\leq t^{M}_{\mathrm{min}}<T$.
\end{theorem}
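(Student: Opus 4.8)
The plan is to construct the field from the terminal time $T$ by a local existence-and-uniqueness argument, extend it as far to the left as possible by gluing, and then identify the only obstruction to reaching $t=0$ as a blow-up of the spatial Lipschitz constant. First I would prove local existence: there is a $\delta>0$, depending only on the Lipschitz data and on the strict gap $L_{\Sigma,z}^{-1}-L_\xi>0$, such that a unique weakly regular Markovian decoupling field exists on $[T-\delta,T]$. The mechanism specific to the Markovian setting is the relation $Z_s=u_x(s,X_s)\cdot\Sigma(s,X_s,Y_s,Z_s)$: whenever a candidate $u$ is Lipschitz in $x$ with $L_{u,x}<L_{\Sigma,z}^{-1}$, the map $z\mapsto u_x\cdot\Sigma(\cdot,\cdot,\cdot,z)$ is a contraction of modulus $L_{u,x}L_{\Sigma,z}<1$, so this implicit equation has a unique bounded solution $Z$. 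I would therefore run a Banach fixed point on the complete metric space of Lipschitz functions $u:[T-\delta,T]\times\IR^n\to\IR^m$ with $u(T,\cdot)=\xi$, $L_{u,x}\le L$ for some fixed $L\in[L_\xi,L_{\Sigma,z}^{-1})$, and $\sup_s|u(s,0)|<\infty$. On such a space $Z$ is a priori bounded, so the forward SDE and the backward equation become effectively Lipschitz and the solution map $u\mapsto\tilde u$, defined via $\tilde u(s,x):=Y^{s,x}_s$, is well defined; the strict gap ensures that for $\delta$ small the Lipschitz constant of $\tilde u$ stays $\le L$ and that the map contracts in a suitable sup-type norm.

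Next I would upgrade regularity. Control in $z$ is immediate from the implicit relation above together with $L_{u,x}<L_{\Sigma,z}^{-1}$. For strong regularity I would differentiate the FBSDE variationally in the initial value $x$: the bound on $Z$ turns the linearised FBSDE into a linear system with bounded coefficients, and standard $L^2$ estimates, together with the weak-differentiability results for Lipschitz data recalled above, yield \eqref{STRongregul1} and \eqref{STRongregul2}. Local uniqueness also follows, because once $Z$ is known to be controlled the problem is a genuine Lipschitz FBSDE, for which $(X,Y,Z)$ is unique and hence so is $u$ through the decoupling condition $Y_s=u(s,X_s)$.

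I would then glue: if $u$ is a weakly regular field on $[t_1,T]$ and $v$ a weakly regular field on $[t_0,t_1]$ for terminal condition $u(t_1,\cdot)$, the function pasted from $v$ and $u$ is again weakly regular on $[t_0,T]$. Since any two weakly regular fields agree on the overlap of their domains (by the uniqueness just established, propagated interval by interval), the union $I^M_{\mathrm{max}}$ of all intervals carrying such a field is itself an interval with right endpoint $T$, and the individual fields paste into a single weakly regular $u$ on $I^M_{\mathrm{max}}$, which is locally, hence globally on $I^M_{\mathrm{max}}$, controlled in $z$, strongly regular and continuous. For the dichotomy, suppose $t^M_{\mathrm{min}}>0$ and $t^M_{\mathrm{min}}\in I^M_{\mathrm{max}}$. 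Then $\xi':=u(t^M_{\mathrm{min}},\cdot)$ is Lipschitz with $L_{\xi'}\le L_{u,x}<L_{\Sigma,z}^{-1}$, so the local step applied at the terminal time $t^M_{\mathrm{min}}$ yields a weakly regular field on $[t^M_{\mathrm{min}}-\delta',T]$, contradicting maximality; hence either $I^M_{\mathrm{max}}=[0,T]$ or it is the half-open interval $(t^M_{\mathrm{min}},T]$.

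The crux is the local step. One must run the fixed point in a space that simultaneously keeps $L_{u,x}$ strictly below the threshold $L_{\Sigma,z}^{-1}$, so that the implicit $Z$-equation remains solvable and $Z$ stays bounded, and accommodates the merely locally Lipschitz (in $z$) dependence of $M,\Sigma,f$. Verifying that the propagated Lipschitz constant does not cross the threshold for a uniform choice of $\delta$, and obtaining the variational estimates uniformly in the initial value, are the delicate points; once the local theory and its uniform bounds are in place, the gluing, maximality and dichotomy arguments are soft.
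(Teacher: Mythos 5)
The paper does not prove this statement at all: it is imported verbatim as Theorem 3.21 of \cite{FIP15} (resting on Chapter 2 of \cite{Fromm15}), so there is no internal proof to compare against. Your sketch reconstructs essentially the argument of that cited source — local existence by a fixed point exploiting the strict gap $L_\xi < L_{\Sigma,z}^{-1}$ and the implicit relation $Z_s = u_x(s,X_s)\cdot\Sigma(s,X_s,Y_s,Z_s)$, which bounds $Z$ a priori and lets one reduce the merely locally-Lipschitz-in-$z$ (MLLC) setting to a genuinely Lipschitz one by truncation in $z$; variational differentiation for strong regularity; gluing of fields into the maximal interval; and extension past any attained left endpoint to force the dichotomy — so your approach coincides with that of the reference rather than offering a different route.
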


Existence of weakly regular decoupling fields implies existence and uniqueness of classical solutions:

\begin{lemma}[Theorem 3.18 in \cite{FIP15}.]\label{UNIqXYZM}
Let $M,\Sigma,f,\xi$ satisfy MLLC and assume that there exists a weakly regular Markovian decoupling field $u$ on some interval $[t,T]$.\\ Then for any initial condition $X_t=x\in\IR^n$ there is a unique solution $(X,Y,Z)$ of the FBSDE on $[t,T]$ such that
$$\sup_{s\in[t,T]}\IE[|X_s|^2]+\sup_{s\in[t,T]}\IE[|Y_s|^2]+\|Z\|_{\infty ,t}<\infty.$$
\end{lemma}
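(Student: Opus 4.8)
The plan is to treat the two assertions separately, viewing existence as essentially a restatement of the defining property of the decoupling field and reserving the real work for uniqueness. For existence, I would apply the definition of a Markovian decoupling field with $t_1=t$, $t_2=T$ and the constant (hence $\cF_t$-measurable) initial value $X_t=x$. This immediately produces progressive processes $X,Y,Z$ on $[t,T]$ satisfying the forward SDE started at $x$, the backward equation with terminal value $Y_T$, the decoupling condition $Y_s=u(s,X_s)$, and $\|Z\|_{\infty,[t,T]}<\infty$. Since $u(T,\cdot)=\xi$, evaluating the decoupling condition at $s=T$ gives $Y_T=\xi(X_T)$, so $(X,Y,Z)$ indeed solves the FBSDE. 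For the $L^2$-bounds, note that with $\|Z\|_{\infty,t}<\infty$ and $Y_s=u(s,X_s)$ the coefficients obey $|M(s,X_s,Y_s,Z_s)|+|\Sigma(s,X_s,Y_s,Z_s)|\le C(1+|X_s|)$ (by MLLC, which makes $M,\Sigma$ Lipschitz on the bounded $z$-ball, together with weak regularity $|u(s,X_s)|\le\sup_s|u(s,0)|+L_{u,x}|X_s|$), so standard Gronwall moment estimates yield $\sup_s\IE[|X_s|^2]<\infty$, and then $\sup_s\IE[|Y_s|^2]<\infty$.

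The heart of the lemma is uniqueness, and the main obstacle is to show that \emph{every} solution $(X,Y,Z)$ with $\|Z\|_{\infty,t}<\infty$ must satisfy the decoupling condition $Y_s=u(s,X_s)$. Once this is established the uniqueness is routine: $X$ then solves a decoupled forward SDE whose coefficients are Lipschitz on the relevant bounded $z$-region, forcing $X$ to be unique, after which $Y=u(\cdot,X)$ and $Z$ (read off from the backward martingale representation) are determined. I would obtain the decoupling condition by a backward induction along a finite partition $t=s_0<s_1<\dots<s_N=T$. Fixing a competing solution, let $C$ bound both its $\|Z\|_\infty$ and the control constant of $u$ (the latter available from Theorem~\ref{GLObalexistM}, which guarantees the unique weakly regular field is controlled in $z$); on $\IR^n\times\IR^m\times\{|z|\le C\}$ all coefficients are genuinely Lipschitz. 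Weak regularity supplies the crucial uniform gap $L_{u,x}<L_{\Sigma,z}^{-1}$, and on a subinterval $[s_{i-1},s_i]$ whose length depends only on these Lipschitz constants and on $L_{\Sigma,z}^{-1}-L_{u,x}>0$, the standard (SLC) contraction argument for FBSDEs with Lipschitz data and terminal Lipschitz constant strictly below $L_{\Sigma,z}^{-1}$ shows that, given its terminal value at $s_i$, the solution restricted to $[s_{i-1},s_i]$ is unique and coincides with the one generated by $u$; hence $Y_{s_{i-1}}=u(s_{i-1},X_{s_{i-1}})$.

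Running this from $s_N=T$, where $Y_T=\xi(X_T)=u(T,X_T)$ holds by hypothesis, down to $s_0=t$ propagates the decoupling condition across all of $[t,T]$. The reason the induction terminates after finitely many steps is precisely that weak regularity keeps every intermediate terminal map $u(s_i,\cdot)$ Lipschitz with the \emph{same} constant $L_{u,x}<L_{\Sigma,z}^{-1}$, so one fixed subinterval length works uniformly and $N$ can be chosen finite.

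I expect the delicate point to be the local contraction estimate itself. Because $M$ and $f$ are only \emph{locally} Lipschitz in $z$, the fixed-point map must be run inside the a priori bound $|z|\le C$, and one must verify that it stays there. This is exactly where $L_{u,x}L_{\Sigma,z}<1$ enters: through the Markovian relation $Z_s=\diff{x}{u}(s,X_s)\,\Sigma(s,X_s,Y_s,Z_s)$ the inequality guarantees that, for fixed $(s,X_s)$, the map $z\mapsto \diff{x}{u}(s,X_s)\,\Sigma(s,X_s,u(s,X_s),z)$ is a contraction (modulus $L_{u,x}L_{\Sigma,z}<1$), so $Z$ is uniquely solvable as a Lipschitz function of $(s,X_s)$ and controllably bounded. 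With the decoupling condition available for both solutions, their forward parts solve the same Lipschitz SDE from the same initial value and therefore agree, and the uniqueness of $Y$ and $Z$ follows, completing the proof.
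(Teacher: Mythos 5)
The paper does not prove this lemma at all: it is quoted as Theorem 3.18 of \cite{FIP15}, so there is no in-paper proof to compare with. Judged on its own terms, your proposal reconstructs the canonical argument behind that cited result and is essentially correct: existence comes straight from the defining property of the decoupling field applied with $t_1=t$, $t_2=T$ (the decoupling condition at the terminal time giving $Y_T=\xi(X_T)$) together with Gronwall moment bounds, and uniqueness comes from propagating $Y_s=u(s,X_s)$ across a partition of uniform mesh by the small-interval contraction for Lipschitz FBSDEs, where $L_{u,x}<L_{\Sigma,z}^{-1}$ is what makes one fixed step size work at every stage and the a priori essential bounds on $Z$ reduce the merely local Lipschitz dependence on $z$ to a genuinely Lipschitz setting. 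Two points should be tightened. First, fixing the mesh in advance requires the $Z$-bound of the $u$-generated solutions to be uniform over subintervals and initial conditions; your appeal to ``controlled in $z$'' via Theorem \ref{GLObalexistM} tacitly identifies $u$ with the restriction of the canonical field on $I^{M}_{\mathrm{max}}$, which needs uniqueness of weakly regular fields on $[t,T]$ from the same framework (without that uniformity the step lengths could degenerate and the induction need not terminate in finitely many steps). Second, your closing shortcut --- that uniqueness is then routine because $X$ solves a decoupled Lipschitz SDE --- implicitly uses the Markovian identity $Z_s=\diff{x}{u}(s,X_s)\,\Sigma(s,X_s,Y_s,Z_s)$ for an arbitrary competing solution, which you have not established at that stage; it is also unnecessary, since your backward induction already shows that any solution coincides subinterval by subinterval with the one generated by $u$, and forward propagation of equal initial values then yields uniqueness of the whole triple directly.
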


The following result basically states that for a singularity $t^{M}_{\mathrm{min}}$ to occur $\diff{x}{u}$ has to "explode" at $t^M_{\mathrm{min}}$. It is the key to showing well-posedness for particular problems via contradiction.

\begin{lemma}[Lemma 3.22 in \cite{FIP15}.]\label{EXPlosionM}
Let $M,\Sigma,f,\xi$ satisfy MLLC. If $I^{M}_{\mathrm{max}}=(t^{M}_{\mathrm{min}},T]$, then
$$\lim_{t\downarrow t^{M}_{\mathrm{min}}}L_{u(t,\cdot)}=L_{\Sigma,z}^{-1},$$
where $u$ is the unique weakly regular Markovian decoupling field from Theorem \ref{GLObalexistM}.
\end{lemma}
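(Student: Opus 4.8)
The plan is to argue by contradiction, combining a backward short-time existence step with the maximality of $I^{M}_{\mathrm{max}}$. First I would record the easy half. Since $u$ restricted to each $[t,T]\subseteq(t^{M}_{\mathrm{min}},T]$ is weakly regular, we have $L_{u(t,\cdot)}\le \sup_{s\in[t,T]}L_{u(s,\cdot)}<L_{\Sigma,z}^{-1}$ for every $t\in(t^{M}_{\mathrm{min}},T]$, whence $\limsup_{t\downarrow t^{M}_{\mathrm{min}}}L_{u(t,\cdot)}\le L_{\Sigma,z}^{-1}$. It therefore suffices to prove $\liminf_{t\downarrow t^{M}_{\mathrm{min}}}L_{u(t,\cdot)}\ge L_{\Sigma,z}^{-1}$. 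Suppose this fails: then there are a constant $c<L_{\Sigma,z}^{-1}$ and a sequence $t_k\downarrow t^{M}_{\mathrm{min}}$ with $L_{u(t_k,\cdot)}\le c$ for all $k$.

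The core of the argument is a local existence step whose length is \emph{uniform} in $k$. Fix $k$ and regard $\xi_k:=u(t_k,\cdot)$ as a new terminal condition. Since $L_{\xi_k}=L_{u(t_k,\cdot)}\le c<L_{\Sigma,z}^{-1}$ and $M,\Sigma,f$ satisfy MLLC, the data $(\xi_k,(M,\Sigma,f))$ again satisfy MLLC with the same constants, so there is a weakly regular Markovian decoupling field $\tilde u_k$ on some interval $[t_k-\delta,t_k]$. The decisive point is that $\delta>0$ can be chosen independently of $k$, depending only on the MLLC constants and on the gap $L_{\Sigma,z}^{-1}-c$. This uniformity stems from the Markovian a priori bound on $Z$: from $Z_s=u_x(s,X_s)\,\Sigma(s,X_s,Y_s,Z_s)$ and the $z$-Lipschitz continuity of $\Sigma$ one obtains $|Z_s|\le \frac{L_{u,x}\,\|\Sigma(\cdot,\cdot,\cdot,0)\|_{\infty}}{1-L_{u,x}L_{\Sigma,z}}$, which stays uniformly bounded as long as $L_{u,x}$ is bounded away from $L_{\Sigma,z}^{-1}$ by the fixed gap. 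Thus $Z$ remains in a fixed bounded ball $B\subset\IR^{m\times d}$ on which $M,\Sigma,f$ are uniformly Lipschitz, and a contraction argument on an interval whose length depends only on these uniform constants yields $\tilde u_k$; shrinking $\delta$ if necessary, I would also ensure $\sup_{s\in[t_k-\delta,t_k]}L_{\tilde u_k(s,\cdot)}<L_{\Sigma,z}^{-1}$.

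Next I would glue the two fields. Since $\tilde u_k(t_k,\cdot)=\xi_k=u(t_k,\cdot)$, concatenating $\tilde u_k$ on $[t_k-\delta,t_k]$ with $u$ on $[t_k,T]$ produces a Markovian decoupling field $\hat u_k$ on $[t_k-\delta,T]$: the pasted triple $(X,Y,Z)$ solves the FBSDE on each subinterval and satisfies the decoupling condition throughout. Its spatial Lipschitz constant is the maximum over the two pieces, $\sup_{s\in[t_k-\delta,T]}L_{\hat u_k(s,\cdot)}=\max\{\sup_{s\in[t_k-\delta,t_k]}L_{\tilde u_k(s,\cdot)},\ \sup_{s\in[t_k,T]}L_{u(s,\cdot)}\}<L_{\Sigma,z}^{-1}$, and $\sup_s|\hat u_k(s,0)|<\infty$, so $\hat u_k$ is weakly regular. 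By the definition of $I^{M}_{\mathrm{max}}$ this forces $[t_k-\delta,T]\subseteq I^{M}_{\mathrm{max}}=(t^{M}_{\mathrm{min}},T]$, i.e. $t_k-\delta>t^{M}_{\mathrm{min}}$. But $\delta$ is fixed and $t_k\downarrow t^{M}_{\mathrm{min}}$, so for $k$ large we have $t_k-\delta<t^{M}_{\mathrm{min}}$, a contradiction. Hence $\liminf_{t\downarrow t^{M}_{\mathrm{min}}}L_{u(t,\cdot)}\ge L_{\Sigma,z}^{-1}$, and with the $\limsup$ bound the limit exists and equals $L_{\Sigma,z}^{-1}$.

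The hard part will be precisely the uniformity of the step length $\delta$: the whole scheme collapses if the local existence interval is allowed to shrink to zero as $t_k\downarrow t^{M}_{\mathrm{min}}$. Securing it requires that the $Z$-control $\frac{L_{u,x}\|\Sigma(\cdot,\cdot,\cdot,0)\|_{\infty}}{1-L_{u,x}L_{\Sigma,z}}$ stay bounded — which is exactly where the fixed gap $c<L_{\Sigma,z}^{-1}$ enters — and that on the resulting bounded $z$-ball the MLLC Lipschitz constants be uniform over $[0,T]$; both hold under MLLC. A secondary technical point is to verify that the concatenated triple is genuinely a decoupling field and that weak regularity is preserved under gluing, which are the standard local-to-global properties of the theory that already underlie the fact that $I^{M}_{\mathrm{max}}$ is an interval rather than a mere union of intervals.
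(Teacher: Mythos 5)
The paper offers no proof of this lemma at all --- it is quoted verbatim as Lemma 3.22 of \cite{FIP15}, so the only meaningful comparison is with the argument in that reference, which your proposal reproduces: prove the limsup bound trivially from weak regularity, then derive a contradiction from a sequence $t_k\downarrow t^{M}_{\mathrm{min}}$ with $L_{u(t_k,\cdot)}\le c<L_{\Sigma,z}^{-1}$ by running backward local existence from the terminal condition $u(t_k,\cdot)$ with a step length $\delta$ that is uniform in $k$ (secured exactly as you say, by the Markovian bound $|Z_s|\le L_{u,x}\,\|\Sigma(\cdot,\cdot,\cdot,0)\|_\infty/(1-L_{u,x}L_{\Sigma,z})$, which keeps $Z$ in a fixed ball on which the MLLC Lipschitz constants are uniform), gluing the local field to $u$ at $t_k$, and contradicting the maximality of $I^{M}_{\mathrm{max}}$. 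Your argument is correct and is essentially the same approach as the cited proof.
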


In the following sections we will use the aforementioned theoretical results to study the solvability and regularity of system \eqref{fbsde:base}. This FBSDE naturally implies parameter functions $M,\Sigma,f$ and $\xi$ such that $n=3$, $d=m=1$ and $T=1$. Note that in our case $f$ vanishes, while $\Sigma$ is, in some sense, degenerate. We have $L_{\Sigma,z}=0$ and $L_{\Sigma,z}^{-1}=\infty$. Our aim is to rigourously conduct the following steps and arguments: Considering the maximal interval $I^{M}_{\mathrm{max}}$ associated with our problem, we employ Theorem \ref{GLObalexistM} to obtain a decoupling field $u$ on an arbitrary non-empty interval $[t,T] \subseteq I^{M}_{\mathrm{max}}$ such that $u$ is Lipschitz continuous in $x$ with a Lipschitz constant possibly depending on $t$. By studying the object $\partial_x u \left( s, X_s \right)$ we derive a bound for the Lipschitz constant of $u$ which is independent of $t$. The final step is to use Lemma \ref{EXPlosionM} to conclude that the case $I^{M}_{\mathrm{max}}=(t^{M}_{\mathrm{min}},T]$ cannot be fulfilled and hence, by Theorem \ref{GLObalexistM}, $I^{M}_{\mathrm{max}}=[0,T]$ must hold, which means that our FBSDE has a solution.

\section{Gradient dynamics of the decoupling field}
\label{sec:diffDecoupling}

In this section we investigate the dynamics of the spatial gradient of the decoupling field for the FBSDE \eqref{fbsde:base}. Based on the findings of this section we will derive, in the subsequent section, a uniform bound for the Lipschitz constant of the decoupling field.

Let $g$, $\mu$ and $\sigma$ be differentiable, $\sigma \geq \epsilon > 0$ and $g'$, $\frac{\mu}{\sigma^2}$, $\frac{\diff{t}{\mu}}{\sigma^2}$, $\frac{\diff{a}{\mu}}{\sigma^2}$, $\frac{\diff{t}{\sigma}}{\sigma}$ as well as $\frac{\diff{a}{\sigma}}{\sigma}$ be bounded.  

It is straightforward to verify that the associated FBSDE satisfies (MLLC) such that the theory of the previous section is applicable. By Theorem \ref{GLObalexistM} the maximal interval $I_{\mathrm{max}}^M$ contains an interval $[t,1]$ with $t < 1$. Let $x \in \IR^3$ and denote by $X = (\X{1}{}, \X{2}{}, \X{3}{})^\top, Z, Y$ the solution of the FBSDE \eqref{fbsde:base} on $[t,1]$ with initial condition $(\X{1}{t}, \X{2}{t}, \X{3}{t}) = x$. Moreover, denote by $u$ the decoupling field associated to the FBSDE \eqref{fbsde:base}. From Theorem \ref{GLObalexistM} we also know that the partial derivatives $\diff{x_1}{u}$, $\diff{x_2}{u}$, $\diff{x_3}{u}$ and the process $Z$ are bounded on $[t,1]$.

For shorter notation we define for all $s \in [t,1]$ 
\begin{align*}
\sigma_s := \sigma (\X{2}{s}, Y_s + \X{3}{s}), \qquad & \mu_s := \mu ( \X{2}{s}, Y_s + \X{3}{s} ),\\
\sigma_{t,s} := \diff{t}{\sigma} ( \X{2}{s}, Y_s + \X{3}{s} ), \qquad & \sigma_{a,s} := \diff{a}{\sigma} ( \X{2}{s}, Y_s + \X{3}{s} ), \\
\mu_{t,s} := \diff{t}{\mu} ( \X{2}{s}, Y_s + \X{3}{s} ), \qquad & \mu_{a,s} := \diff{a}{\mu} ( \X{2}{s}, Y_s + \X{3}{s} )
\end{align*}
and %the weak partial derivatives of the decoupling field $u$ of FBSDE \eqref{fbsde:base} with respect to its space variables
\begin{align*}
\u{1}{s} &:= \diff{x_1}{u} (s, \X{1}{s}, \X{2}{s}, \X{3}{s} ), \\ 
\u{2}{s} &:= \diff{x_2}{u} (s, \X{1}{s}, \X{2}{s}, \X{3}{s} ), \\
\u{3}{s} &:= \diff{x_3}{u} (s, \X{1}{s}, \X{2}{s}, \X{3}{s} ).
\end{align*}
In the following we refer to $\u{1}{}, \u{2}{}, \u{3}{}$ as the gradient processes associated to the inital value $x$ at time $t$.
The next result describes the dynamics of the gradient processes. For its derivation we first argue that the processes are It\^o processes and then match the coefficients appropriately. In contrast to the approach of \cite{FIP15}, we do not explicitly compute the dynamics of the inverse of the Jacobi matrix of $X$.
\begin{lemma}
\label{lemma:dynamicsFirstDiff}
Let $g$, $\mu$ and $\sigma$ be differentiable, $\sigma \geq \epsilon > 0$ and $g'$, $\frac{\mu}{\sigma^2}$, $\frac{\diff{t}{\mu}}{\sigma^2}$, $\frac{\diff{a}{\mu}}{\sigma^2}$, $\frac{\diff{t}{\sigma}}{\sigma}$ as well as $\frac{\diff{a}{\sigma}}{\sigma}$ be bounded.
Then the gradient processes $\u{1}{}$, $\u{2}{}$ and $\u{3}{}$ have the dynamics
\begin{align}
\u{1}{s} & = g' \left( \X{1}{1} \right) + \int_s ^1 \u{1}{r} \frac{Z_r^2}{\sigma_r^2 } \left( \u{3}{r} \left( \mu_{a,r} - 2 \mu_r \frac{\sigma_{a,r}}{\sigma_r} \right) - 2 \u{2}{r} \frac{\sigma_{a,r}}{\sigma_r }\right) \d r - \int_s ^1 \Z{1}{r} \d \W{r} \nonumber \\
\u{2}{s} & =  \int_s^1 \u{3}{r} \frac{Z_r^2}{\sigma_r^2 } \left( \u{2}{r} \mu_{a,r} + \mu_{t,r} \right) -2 \frac{Z_r^2}{\sigma_r^2} \left( \frac{\sigma_{t,r}}{\sigma_r} + \u{2}{r} \frac{\sigma_{a,r}}{\sigma_r} \right) \left( \u{2}{r} + \u{3}{r} \mu_r \right) \d r - \int_s^1 \Z{2}{r} \d \W{r} \nonumber \\
\u{3}{s} & = -1 + \int_s^1 \left( \u{3}{r} + 1 \right) \frac{Z_r^2}{\sigma^2_r} \left( \u{3}{r} \mu_{a,s} -2 \frac{\sigma_{a,r}}{\sigma_r} \left( \u{2}{r} + \u{3}{r} \mu_r \right) \right) \d r - \int_s^1 \Z{3}{r} \d \W{r}, \label{bsdes:ux}
\end{align}
for all $s \in [t,1]$, where $\Z{1}{}$, $\Z{2}{}$, $\Z{3}{}$ are locally square integrable processes. Moreover, the process 
\begin{equation*}
\W{s} := W_s - \int_t^s 2 \frac{Z_r}{\sigma_r^2} \left( \u{2}{r} + \u{3}{r} \mu_r \right) \d r
\end{equation*}
is a Brownian motion under an equivalent probability measure, and the Jacobi matrix
\begin{equation*}
\partial_x X_s := \left( \begin{array}{ccc}
\partial_{x_1} \X{1}{s} & \partial_{x_2} \X{1}{s} & \partial_{x_3} \X{1}{s} \\
\partial_{x_1} \X{2}{s} & \partial_{x_2} \X{2}{s} & \partial_{x_3} \X{2}{s} \\
\partial_{x_1} \X{3}{s} & \partial_{x_2} \X{3}{s} & \partial_{x_3} \X{3}{s}
\end{array} \right)
\end{equation*}
is invertible for every $s \in [t,1]$ almost surely.
\end{lemma}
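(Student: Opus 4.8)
We are asked to establish three facts about the gradient processes: (i) the BSDE dynamics \eqref{bsdes:ux}, (ii) that $\widetilde W$ is a Brownian motion under an equivalent measure, and (iii) invertibility of the Jacobian $\partial_x X_s$. The plan is to treat these in a logical order that front-loads the variational-differentiability machinery from Section~\ref{sec:introDecFields}.

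The first step is to establish that $X$, $Y$, $Z$ are weakly differentiable with respect to the initial value $x$ and that the decoupling condition \eqref{dec condi} may be differentiated. By Theorem~\ref{GLObalexistM} the decoupling field $u$ on $[t,1]$ is strongly regular, hence the solution triple is weakly differentiable in $x$ with the integrability bounds \eqref{STRongregul1}--\eqref{STRongregul2}. Differentiating the forward equations of \eqref{fbsde:base} componentwise in $x$ (using the chain rule on $\sigma_s = \sigma(\X{2}{s}, Y_s + \X{3}{s})$ and $\mu_s$, and the fact that $\partial_{x} Y_s = \partial_x u \cdot \partial_x X_s$) yields a linear SDE system for the Jacobian $\partial_x X_s$. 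Differentiating the backward equation $Y_s = g(\X{1}{1}) - \X{3}{1} - \int_s^1 Z_r \,\d W_r$ gives the terminal data $g'(\X{1}{1})\,\partial_x \X{1}{1} - \partial_x \X{3}{1}$. The gradient processes $\u{i}{}$ are by definition $\partial_{x_i} u$ evaluated along $X$; the key identity linking everything is the Markovian relation $Z_s = u_x(s,X_s)\cdot\Sigma$, which here reads $Z_s = \u{1}{s}$ (since only the first forward component carries the Brownian term with unit coefficient). This collapses the $Z$-component of the gradient system into the $\u{1}{}$-equation and is what produces the specific first line of \eqref{bsdes:ux}.

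The second step is the change of measure. I would read off the drift that the $Z$-terms generate: because the forward diffusion feeds into $\sigma_s,\mu_s$ through $Y$, and $Y$ is differentiated via $u$, the natural Girsanov drift is exactly $2\frac{Z_r}{\sigma_r^2}(\u{2}{r} + \u{3}{r}\mu_r)$. I would verify the Novikov/bounded-drift condition using that $Z$, $\u{2}{}$, $\u{3}{}$ are bounded on $[t,1]$ (from strong regularity and control in $z$) and $\sigma \ge \epsilon$, $\mu/\sigma^2$ bounded; this makes the drift bounded, so $\widetilde W$ is a $\widetilde{\IP}$-Brownian motion. Rewriting the three gradient SDEs under $\widetilde{\IP}$ absorbs the cross-terms and yields the driftless martingale parts $\int \Z{i}{r}\,\d\widetilde W_r$ displayed in \eqref{bsdes:ux}; the $\Z{i}{}$ are locally square integrable by the gradient integrability bound \eqref{STRongregul2}.

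The third step, invertibility of $\partial_x X_s$, I would obtain from the explicit structure of the Jacobian SDE. Since the forward system is triangular in a suitable sense --- $\X{1}{}$ is just $x^{(1)} + W_s$, so $\partial_x \X{1}{s}$ is the constant row $(1,0,0)$ --- the Jacobian has a block-triangular form whose determinant solves a linear ODE (a Liouville-type formula) driven by the bounded coefficients; hence the determinant never vanishes on the compact interval, giving invertibility almost surely. \textbf{The main obstacle} is the first step: justifying that one may legitimately differentiate the coupled forward equations through $Y_s = u(s,X_s)$ and match the Itô coefficients to arrive at precisely the drift terms in \eqref{bsdes:ux}. The delicate point is that $u$ is only Lipschitz (weakly differentiable) in $x$, not $C^1$, so the chain-rule manipulations must be carried out at the level of weak derivatives with the strong-regularity bounds doing the work, and the coefficient-matching must be argued rather than obtained by naive differentiation of a smooth field.
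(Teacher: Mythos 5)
Your outline reproduces the broad skeleton of the paper's proof (differentiate in the initial value, identify Itô coefficients, Girsanov, a Liouville-type determinant argument), but two of your load-bearing steps have genuine gaps. The first is your ``key identity'' $Z_s = \u{1}{s}$: it is not available at this stage and is not what produces the first equation of \eqref{bsdes:ux}. In the paper that identity is Lemma \ref{lemma:ZequalsU1}, proved \emph{after} the present lemma, and even there the equality requires a continuous version of $\diff{x_1}{u}$; under the hypotheses here one only gets the inequality $\vert Z_s\vert \le \sup_{r,x}\vert\diff{x_1}{u}(r,x)\vert$. The actual source of the drivers in \eqref{bsdes:ux} is a coefficient-matching computation that your proposal never performs: set $U_s := \partial_x u(s,X_s) = \partial_x Y_s\,(\partial_x X_s)^{-1}$, note it is an Itô process with decomposition involving a drift $b$ and a martingale density $\tilde Z$, and use $0 = \Dt\, \partial_x Y_s = -b_s\,\partial_x X_s + U_s\,\partial_x\!\left[ M(X_s,Y_s,Z_s)\right]$ together with $\partial_x Z_s = \tilde Z_s\,\partial_x X_s$ to read off $b_s = U_s\left[\diff{x}{M} + \diff{y}{M}\,U_s + \diff{z}{M}\,\tilde Z_s\right]$; the quadratic-in-$U$ terms give the displayed integrands and the $\diff{z}{M}\,\tilde Z_s$ terms are exactly what Girsanov absorbs. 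Moreover, this construction presupposes invertibility of $\partial_x X_s$, so it must be localized: the paper works on $[t,\tau)$ with $\tau := \inf\{s : \det(\partial_x X_s)\le 0\}\wedge 1$ and only afterwards shows $\tau = 1$. Your ordering --- gradient dynamics first, invertibility last --- is circular without such a localization.

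The second gap is in the invertibility step itself: you claim the Jacobian ODE is ``driven by the bounded coefficients,'' but it is not. One has $\partial_x X_s = \Id + \int_t^s \left[\diff{x}{M} + \diff{y}{M}\,U_r + \diff{z}{M}\,\tilde Z_r\right]\partial_x X_r \,\d r$, and the term $\diff{z}{M}\,\tilde Z_r$ contains $\tilde Z$, which is not bounded; nor can its square integrability be extracted from \eqref{STRongregul2} alone, since $\tilde Z_s = \partial_x Z_s\,(\partial_x X_s)^{-1}$ involves the very inverse Jacobian you are trying to control. The paper closes this before invoking the exponential representation: on each $[t,\tilde\tau]$ with $\tilde\tau < \tau$ the processes $\u{i}{}$ solve linear BSDEs with bounded coefficients, so standard linear-BSDE estimates (Theorem A.1.11 in \cite{Fromm15}) give a BMO bound on the $\Z{i}{}$ uniform in $\tilde\tau$, hence $\E\left[\int_t^\tau \vert\tilde Z_r\vert^2\,\d r\right]<\infty$; only this pathwise integrability makes the Liouville/exponential formula valid up to and including $\tau$, forcing $\tau = 1$ by contradiction with the definition of $\tau$. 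Your Girsanov step, by contrast, is essentially correct and coincides with the paper's: the drift $2\frac{Z_r}{\sigma_r^2}\left(\u{2}{r}+\u{3}{r}\mu_r\right)$ is bounded on $[t,1]$ because $Z$ and the gradient processes are bounded there and $\sigma\ge\epsilon$.
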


\begin{proof}
For $x' = (x'_1,x'_2,x'_3)^\top \in \IR^3$, $y,z \in \IR$ we define 
\begin{equation*}
M \left( x' , y, z \right) := \left( \begin{array}{c} 0 \\ \frac{z^2}{\sigma^2 ( x'_2, y + x'_3)} \\ \mu \left( x'_2 , y + x'_3 \right) \frac{z^2}{\sigma^2(x'_2 , y + x'_3)} \end{array} \right),
\qquad
\Sigma := \left( \begin{array}{c} 1 \\ 0 \\ 0 \end{array} \right)
\end{equation*}
and 
\begin{equation*}
\xi \left( x' \right) := g(x'_1) - x'_3.
\end{equation*}
Then
\begin{equation*}
X_s = x + \int_t ^s M \left( X_r, Y_r, Z_r \right) \d r + \int_t^s \Sigma \d W_r
\end{equation*}
and
\begin{equation*}
Y_s = \xi \left( X_1 \right) - \int_s^1 Z_r \d W_r.
\end{equation*}
Now, define a stopping time $\tau$ via
\begin{equation*}
\tau := \inf \{ s \in [t,1] \vert \det \left( \partial_x X_s \right) \leq 0 \} \wedge 1.
\end{equation*}
Notice that $\tau > t$ since $\det(\partial_x X_t) = 1$. For all $s \in [t, \tau)$ we have that $\partial_x X_s$ is invertible with $( \partial_x X_s)^{-1}$ being an It\^o process. By setting
\begin{equation*}
U_s := \partial_x u \left( s, X_s \right)
= \left( \begin{array}{ccc} 
\diff{x_1}{u}, & \diff{x_2}{u}, & \diff{x_3}{u}
\end{array} \right) \left( s, X_s \right)
\end{equation*}
which is the gradient process we get
\begin{equation*}
\partial_x Y_s = U_s \cdot \partial_x X_s
\end{equation*}
for all $s \in [t, \tau)$ by the chain rule in Lemma A.3.1 in \cite{Fromm15}. Hence, $U_s = \partial_x Y_s \cdot ( \partial_x X_s )^{-1}$ is an It\^o process and thus there exist $(b_s)$ and $(\tilde{Z}_s)$ such that
\begin{equation*}
U_s = U_1 + \int_s^\tau b_r \d r - \int_s^\tau \tilde{Z}_r \d W_r
\end{equation*}
for all $s \in [t, \tau )$.

For the following we also introduce for an It\^o process $I_s = I_0 + \int_0^s i_r \d r + \int_0^s j_r \d W_r$ the two operators $\Dt$ and $\Dw$ defined via $(\Dt I)_s := i_s$ and $(\Dw I)_s := j_s$. Using this notation we have
\begin{align*}
\partial_x Z_s
&= \Dw \partial_x Y_s \\
&= \Dw \left( U_s \cdot \partial_x X_s \right) 
\\
&= U_s \cdot \Dw \partial_x X_s + \Dw U_s \cdot \partial_x X_s. 
\end{align*}
Since $\Dw \partial_x X_s = 0$, we further obtain $\partial_x Z_s= \tilde{Z}_s \cdot \partial_x X_s$
and thus we get
\begin{equation*}
\tilde{Z}_s = \partial_x Z_s \cdot \left( \partial_x X_s \right)^{-1}
\end{equation*}
for all $s \in [t, \tau )$.
Also,
\begin{align*}
\partial_x \left[ M \left( X_s, Y_s, Z_s \right) \right] \hspace*{-2cm} & \\
&= \diff{x}{M} \left( X_s, Y_s, Z_s \right) \partial_x X_s + \diff{y}{M} \left( X_s, Y_s, Z_s \right) \partial_x Y_s + \diff{z}{M} \left( X_s, Y_s, Z_s \right) \partial_x Z_s \\
&= \diff{x}{M} \left( X_s, Y_s, Z_s \right) \partial_x X_s + \diff{y}{M} \left( X_s, Y_s, Z_s \right) U_s \partial_x X_s + \diff{z}{M} \left( X_s, Y_s, Z_s \right) \tilde{Z}_s \partial_x X_s
\end{align*}
and
\begin{equation*}
0 = \Dt  \partial_x Y_s  = \Dt ( U_s \partial_x X_s ) = - b_s \cdot \partial_x X_s + U_s \cdot \partial_x \left[ M \left( X_s , Y_s , Z_s \right) \right]
\end{equation*}
yielding
\begin{equation*}
b_s
= U_s \left[ \diff{x}{M} \left( X_s, Y_s, Z_s \right) + \diff{y}{M} \left( X_s, Y_s, Z_s \right) U_s + \diff{z}{M} \left( X_s, Y_s, Z_s \right) \tilde{Z}_s \right]
\end{equation*}
for all $s \in [t, \tau )$ with
\begin{equation*}
\diff{x}{M} \left( x , y, z \right)
= \left( \begin{array}{ccc}
0 & 0 & 0 \\
0 & -2 z^2 \frac{\diff{t}{\sigma} (x_2 , y + x_3)}{\sigma^3 (x_2 , y + x_3)} & \frac{\diff{t}{\mu} \left( x_2 , y + x_3 \right) \cdot z^2}{\sigma^2(x_2 , y + x_3)} -2 z^2 \frac{\diff{t}{\sigma} (x_2 , y + x_3)}{\sigma (x_2 , y + x_3)} \frac{\mu \left( x_2 , y + x_3 \right)}{\sigma^2 (x_2 , y + x_3)} \\
0 & -2 z^2 \frac{\diff{a}{\sigma} (x_2 , y + x_3)}{\sigma^3 (x_2 , y + x_3)} & \frac{\diff{a}{\mu} \left( x_2 , y + x_3 \right) \cdot z^2}{\sigma^2(x_2 , y + x_3)} -2 z^2 \frac{\diff{a}{\sigma} (x_2 , y + x_3)}{\sigma (x_2 , y + x_3)} \frac{\mu \left( x_2 , y + x_3 \right)}{\sigma^2 (x_2 , y + x_3)} \end{array} \right) ^T,
\end{equation*}
\begin{equation*}
\diff{y}{M} \left( x , y, z \right) 
= \left( \begin{array}{c} 0 \\ 
-2 z^2 \frac{\diff{a}{\sigma} (x_2 , y + x_3)}{\sigma^3 (x_2 , y + x_3)} \\ 
\frac{\diff{a}{\mu} \left( x_2 , y + x_3 \right) \cdot z^2}{\sigma^2(x_2 , y + x_3)} -2 z^2 \frac{\diff{a}{\sigma} (x_2 , y + x_3)}{\sigma (x_2 , y + x_3)} \frac{\mu \left( x_2 , y + x_3 \right)}{\sigma^2 (x_2 , y + x_3)} \end{array} \right),
\end{equation*}
\begin{equation*}
\diff{z}{M} \left( x , y, z \right) 
= \left( \begin{array}{c} 0 \\ \frac{2 z}{\sigma^2(x_2 , y + x_3)} \\ 2 z \frac{ \mu \left( x_2 , y + x_3 \right)}{\sigma^2(x_2 , y + x_3)} \end{array} \right)
\end{equation*}
being the derivatives of $M$.

Next we turn our attention to the question whether $\partial_x X$ is invertible.
We use that on the interval $[t,1]$ the processes $U$ and $Z$ as well as the functions $\frac{1}{\sigma}$, $\frac{\mu}{\sigma^2}$, $\frac{\diff{t}{\mu}}{\sigma^2}$, $\frac{\diff{a}{\mu}}{\sigma^2}$, $\frac{\diff{t}{\sigma}}{\sigma}$ and $\frac{\diff{a}{\sigma}}{\sigma}$ are bounded, giving that $\diff{x}{M} \left( X_r, Y_r, Z_r \right)$, $\diff{y}{M} \left( X_r, Y_r, Z_r \right) U_r$ and $\diff{z}{M} \left( X_r, Y_r, Z_r \right)$ are bounded, too. Thus, there exist some bounded processes $\alpha$ and $\beta$ depending on $U$, $X$, $Y$ and $Z$, such that for every stopping time $\tilde{\tau} < \tau$, $i=1,2,3$ and $s \in [t,1]$ the process $\u{i}{\cdot \wedge \tilde{\tau}}$ has dynamics
\begin{equation*}
\u{i}{s \wedge \tilde{\tau}} = \u{i}{t} + \int_t^s \left( \p{\alpha}{i}{r} + \p{\beta}{i}{r} \cdot \Z{i}{r} \right)  \1_{ \{ r < \tilde{\tau} \} } \d r + \int_t^s \Z{i}{r} \1_{ \{ r < \tilde{\tau} \} } \d W_r.
\end{equation*}
Standard results on linear BSDEs (see e.g.\ Theorem A.1.11 in \cite{Fromm15}) yield, for every stopping time $\tilde{\tau} < \tau$ and $i=1,2,3$, that $\Z{i}{}$ has a bounded BMO($\IP$)-norm which is independent of $\tilde{\tau}$. Hence, 
\begin{equation}
\label{eq:Zbmo}
\E \left[ \int_t^\tau \vert \tilde{Z}_r \vert^2 \d r \right] < \infty.
\end{equation}
Now observe that
\begin{align*}
\partial_x X_s 
&= \Id + \int_t^s \partial_x \left[ M \left( X_r, Y_r, Z_r \right) \right] \d r \\
&= \Id + \int_t^s \left[ \diff{x}{M} \left( X_r, Y_r, Z_r \right) + \diff{y}{M} \left( X_r, Y_r, Z_r \right) U_r + \diff{z}{M} \left( X_r, Y_r, Z_r \right) \tilde{Z}_r \right] \partial_x X_r \d r
\end{align*}
implying that
\begin{equation*}
\partial_x X_s = \exp \left( \int_t^s \left[ \diff{x}{M} \left( X_r, Y_r, Z_r \right) + \diff{y}{M} \left( X_r, Y_r, Z_r \right) U_r + \diff{z}{M} \left( X_r, Y_r, Z_r \right) \tilde{Z}_r \right] \d r \right).
\end{equation*}
Together with Inequality \eqref{eq:Zbmo} this implies that $\partial_x X_s$ is invertible for all $s \in [t,\tau]$, which again yields that $\tau = 1$ and $\partial_x X$ is invertible on the whole interval $[t,1]$.

What remains to do is to calculate the explicit dynamics of $U$. Observe that
\begin{align*}
&b_s \\
&= U_s \left[ \diff{x}{M} \left( X_s, Y_s, Z_s \right) + \diff{y}{M} \left( X_s, Y_s, Z_s \right) U_s + \diff{z}{M} \left( X_s, Y_s, Z_s \right) \tilde{Z}_s \right] \\
&= \left( \u{1}{s}, \u{2}{s}, \u{3}{s} \right) \left[ 
\left( \begin{array}{ccc} 0&0&0 \\ 
0& -2 \frac{Z_s^2}{\sigma_s^2} \frac{\sigma_{t,s}}{\sigma_s } & -2 \frac{Z_s^2}{\sigma_s^2} \frac{\sigma_{a,s}}{\sigma_s } \\ 
0& \mu_{t,s} \frac{Z_s^2}{\sigma_s^2 } - 2 \mu_s \frac{Z_s^2}{\sigma_s^2} \frac{\sigma_{t,s}}{\sigma_s} & \mu_{a,s} \frac{Z_s^2}{\sigma_s^2 } - 2 \mu_s \frac{Z_s^2}{\sigma_s^2} \frac{\sigma_{a,s}}{\sigma_s}
\end{array} \right)
\right. \\ & \left. \hspace*{2,5cm} 
+ \left( \begin{array}{c} 0 \\ -2 \frac{Z_s^2}{\sigma_s^2} \frac{\sigma_{a,s}}{\sigma_s} \\ \mu_{a,s} \frac{Z_s^2}{ \sigma_s^2}  - 2 \mu_s \frac{Z_s^2}{\sigma_s^2} \frac{\sigma_{a,s}}{\sigma_s} \end{array} \right) \left( \u{1}{s}, \u{2}{s}, \u{3}{s} \right)
+ \left( \begin{array}{c} 0 \\ \frac{2 Z_s}{\sigma_s^2} \\ \frac{2 Z_s}{\sigma_s^2} \mu_s \end{array} \right) \left( \Z{1}{s}, \Z{2}{s}, \Z{3}{s} \right) \right] \\
&= \hspace*{-0.5mm} \left( \hspace*{-1.5mm} \begin{array}{c}
- 2\u{1}{s} \u{2}{s} \frac{Z_s^2}{\sigma_s^2} \frac{\sigma_{a,s}}{\sigma_s } + \u{1}{s} \u{3}{s} \frac{Z_s^2}{\sigma_s^2 } \left( \mu_{a,s} - 2 \mu_s \frac{\sigma_{a,s}}{\sigma_s} \right) \\
-2\u{2}{s}\frac{Z_s^2}{\sigma_s^2} \frac{\sigma_{t,s}}{\sigma_s} + \u{3}{s} \frac{Z_s^2}{\sigma_s^2 } \left( \mu_{t,s} - 2 \mu_s \frac{\sigma_{t,s}}{\sigma_s} \right) - 2 \left( \u{2}{s} \right)^2 \frac{Z_s^2}{\sigma_s^2} \frac{\sigma_{a,s}}{\sigma_s } + \u{2}{s} \u{3}{s} \frac{Z_s^2}{\sigma_s^2 } \left( \mu_{a,s} - 2 \mu_s \frac{\sigma_{a,s}}{\sigma_s} \right) \\
 - 2\u{2}{s} \frac{Z_s^2}{\sigma_s^2} \frac{\sigma_{a,s}}{\sigma_s } + \u{3}{s} \frac{Z_s^2}{\sigma_s^2 } \left( \mu_{a,s} - 2 \mu_s \frac{\sigma_{a,s}}{\sigma_s} \right) - 2\u{2}{s} \u{3}{s} \frac{Z_s^2}{\sigma_s^2} \frac{\sigma_{a,s}}{\sigma_s } + \left( \u{3}{s} \right) ^2 \frac{Z_s^2}{\sigma_s^2 } \left( \mu_{a,s} - 2 \mu_s \frac{\sigma_{a,s}}{\sigma_s} \right)
\end{array} \hspace*{-1.5mm} \right)^T \\
& \hspace*{8.5cm}
+ \left( \begin{array}{c}
\frac{2 Z_s}{\sigma_s^2} \left( \u{2}{s} + \u{3}{s} \mu_s \right) \Z{1}{s} \\
\frac{2 Z_s}{\sigma_s^2} \left( \u{2}{s} + \u{3}{s} \mu_s \right) \Z{2}{s} \\
\frac{2 Z_s}{\sigma_s^2} \left( \u{2}{s} + \u{3}{s} \mu_s \right) \Z{3}{s}
\end{array} \right)^T
\end{align*}
Using that $Y_1=\xi(X_1)$ and hence $U_1 = \nabla \xi (X_1)$ we obtain for the gradient processes the dynamics
\begin{align*}
\u{1}{s} & = g' \left( \X{1}{1} \right) + \int_s ^1 \u{1}{r} \frac{Z_r^2}{\sigma_r^2 } \left( \u{3}{r} \left( \mu_{a,r} - 2 \mu_r \frac{\sigma_{a,r}}{\sigma_r} \right) - 2 \u{2}{r} \frac{\sigma_{a,r}}{\sigma_r }\right) \d r - \int_s ^1 \Z{1}{r} \d \W{r} \\
\u{2}{s} & = \int_s^1 \u{3}{r} \frac{Z_r^2}{\sigma_r^2 } \left( \u{2}{r} \mu_{a,r} + \mu_{t,r} \right) -2 \frac{Z_r^2}{\sigma_r^2} \left( \frac{\sigma_{t,r}}{\sigma_r} + \u{2}{r} \frac{\sigma_{a,r}}{\sigma_r} \right) \left( \u{2}{r} + \u{3}{r} \mu_r \right) \d r - \int_s^1 \Z{2}{r} \d \W{r} \\
\u{3}{s} & = -1 + \int_s^1 \left( \u{3}{r} + 1 \right) \frac{Z_r^2}{\sigma^2_r} \left( \u{3}{r} \mu_{a,s} -2 \frac{\sigma_{a,r}}{\sigma_r} \left( \u{2}{r} + \u{3}{r} \mu_r \right) \right) \d r - \int_s^1 \Z{3}{r} \d \W{r},
\end{align*}
where $\W{s} := W_s - \int_t^s \frac{2 Z_r}{\sigma_r^2 } \left( \u{2}{r} + \u{3}{r} \mu_r \right) \d r$ for all $s \in [t,1]$. Since furthermore \\
$\frac{2 Z_s}{\sigma_r^2} \left( \u{2}{s} + \u{3}{s} \mu_s \right)$ is bounded for all $s \in [t,1]$, where $t \in I_{\mathrm{max}}^M$, we get by Girsanov's theorem that $\W{}$ is a Brownian motion for an equivalent probability measure.
\end{proof}

\section{Bounding the gradient of the decoupling field}
\label{sec:boundingDecoupling}

In this chapter we use the notations and definitions of Chapter \ref{sec:diffDecoupling}.

In the following we derive bounds for the gradient processes that do not depend on the starting time $t \in I^M_{\mathrm{max}}$ and initial value $x \in \IR^3$. In particular, we obtain global estimates for the space derivatives $\diff{x_i}{u}$, $i \in \{ 1,2,3 \}$, of the decoupling field $u$. By appealing to Lemma \ref{EXPlosionM}, we then conclude that FBSDE \eqref{fbsde:base} has a solution on the whole interval $[0,1]$.

\begin{lemma}
\label{lemma:ZequalsU1}
Assume that $g$, $\mu$ and $\sigma$ are differentiable, $\sigma \geq \epsilon > 0$ and $g'$, $\frac{\mu}{\sigma^2}$, $\frac{\diff{t}{\mu}}{\sigma^2}$, $\frac{\diff{a}{\mu}}{\sigma^2}$, $\frac{\diff{t}{\sigma}}{\sigma}$, $\frac{\diff{a}{\sigma}}{\sigma}$ are bounded. Let $u$ be the unique decoupling field to FBSDE \eqref{fbsde:base} on $I^M_{\mathrm{max}}$.

Furthermore, let $t \in I^M_{\mathrm{max}}$, $x \in \IR^3$ and $(\X{1}{}, \X{2}{}, \X{3}{}, Y, Z)$ be the solution of FBSDE \eqref{fbsde:base} with initial condition $x$ at time $t$, and let $\u{1}{}, \u{2}{}, \u{3}{}$ be the associated gradient processes. Then for $s \in [t,1]$
\begin{equation*}
\vert Z_s \vert \leq \sup_{r \in (s,1]} \sup_{x \in \IR^3} \vert \diff{x_1}{u} (r,x) \vert \quad \text{ a.s.}
\end{equation*}
and in particular $\Vert Z \Vert_{\infty,t} \leq \Vert \diff{x_1}{u} \Vert_{\infty,t}$.

Furthermore, if the weak derivative $\diff{x_1}{u}$ has a version whose restriction to the set $[t,1) \times \IR^3$ is continuous in the first two components $t$ and $x_1$, and $\diff{x_1}{u}$ is bounded, then 
\begin{equation*}
Z_s(\omega) = \diff{x_1}{u} \left( s, \X{1}{s}(\omega), \X{2}{s}(\omega), \X{3}{s}(\omega) \right) = \u{1}{s} (\omega)
\end{equation*}
for almost all $(s, \omega) \in  [t,1] \times \Omega$.
\end{lemma}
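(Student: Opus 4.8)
The plan is to exploit the special structure of \eqref{fbsde:base} on $[t,1)$. Since $f\equiv0$, the backward component is a $\IP$-martingale with $\d Y_s=Z_s\,\d W_s$, so $\langle Y\rangle_s=\int_t^s Z_r^2\,\d r$ and $\langle Y,W\rangle_s=\int_t^s Z_r\,\d r$. By Theorem \ref{GLObalexistM} the field is controlled in $z$, so $\|Z\|_{\infty,t}<\infty$ and $u$ is Lipschitz in $x$ with $L_{u,x}<\infty$; moreover $X^{(2)}$ and $X^{(3)}$ are absolutely continuous in time with a.s.\ uniformly bounded rates $Z_s^2/\sigma_s^2$ and $\mu_s Z_s^2/\sigma_s^2$ (bounded because $\|Z\|_{\infty,t}<\infty$, $\sigma\ge\epsilon$ and $\mu/\sigma^2$ is bounded). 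The heuristic is that on a short interval $[s,s+h]$ only $X^{(1)}=x^{(1)}+W$ carries Brownian fluctuation, so the increment $Y_{s+h}-Y_s$ is governed by $\partial_{x_1}u$. I set $k_r:=\sup_{x\in\IR^3}|\partial_{x_1}u(r,x)|\le L_{u,x}$ and, freezing the slow components, $V:=u\big(s+h,X^{(1)}_s+\Delta W,X^{(2)}_s,X^{(3)}_s\big)=F(\Delta W)$ with $\Delta W:=W_{s+h}-W_s$, where, conditionally on $\cF_s$, $F$ is Lipschitz in the independent increment $\Delta W\sim N(0,h)$ with constant at most $k_{s+h}$. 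The absolute continuity of $X^{(2)},X^{(3)}$ yields $|Y_{s+h}-V|\le Ch$ a.s.

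For the inequality I note that $F(0)$ is $\cF_s$-measurable, hence $\mathrm{Var}(V\mid\cF_s)\le\E[(V-F(0))^2\mid\cF_s]\le k_{s+h}^2\,h$. Combining this with $Y_{s+h}=V+O(h)$ and $\E[Y_{s+h}\mid\cF_s]=Y_s$ gives $\E[(Y_{s+h}-Y_s)^2\mid\cF_s]\le h\,k_{s+h}^2+O(h^{3/2})$, which by $\langle Y\rangle$ equals $\E\big[\int_s^{s+h}Z_r^2\,\d r\mid\cF_s\big]$. A conditional Lebesgue-differentiation argument (legitimate since $Z$ is bounded) then yields, for a.e.\ $s$ and a.s., $Z_s^2\le\sup_{r\in(s,1]}k_r^2$, and taking essential suprema produces $\|Z\|_{\infty,t}\le\|\partial_{x_1}u\|_{\infty,t}$. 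Note that no continuity of $\partial_{x_1}u$ is used here, consistent with the hypotheses of the first assertion.

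For the identity I instead evaluate the cross term $\E[(Y_{s+h}-Y_s)\Delta W\mid\cF_s]$ two ways. By $\langle Y,W\rangle$ it equals $\E[\int_s^{s+h}Z_r\,\d r\mid\cF_s]$, whose $h^{-1}$-rescaling tends to $Z_s$ for a.e.\ $s$. On the other hand, replacing $Y_{s+h}$ by $V$ (the error contributes $O(\sqrt h)$ after dividing by $h$) and using $\E[Y_s\Delta W\mid\cF_s]=0$ together with Gaussian integration by parts for the Lipschitz map $F$, namely $\E[F(\Delta W)\Delta W\mid\cF_s]=h\,\E[F'(\Delta W)\mid\cF_s]$, gives $h^{-1}\E[(Y_{s+h}-Y_s)\Delta W\mid\cF_s]=\E[\partial_{x_1}u(s+h,X^{(1)}_s+\Delta W,X^{(2)}_s,X^{(3)}_s)\mid\cF_s]+O(\sqrt h)$. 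Letting $h\downarrow0$ and invoking the assumed continuity of $\partial_{x_1}u$ in $(t,x_1)$, its boundedness, and dominated convergence, the right-hand side converges to $\partial_{x_1}u(s,X_s)=u^{(1)}_s$. Hence $Z_s=u^{(1)}_s$ for a.e.\ $(s,\omega)$, which is the second assertion once the continuous version of $\partial_{x_1}u$ is fixed.

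The main obstacle is the passage from interval averages to pointwise-in-$s$ statements, i.e.\ the conditional Lebesgue differentiation $h^{-1}\E[\int_s^{s+h}Z_r\,\d r\mid\cF_s]\to Z_s$ and its $Z^2$-analogue for a.e.\ $s$, a.s. This is delicate because $Z$ is only defined up to $(\lambda\times\IP)$-null sets and need not have regular paths; I would handle it by working at Lebesgue points of $r\mapsto Z_r$ in $L^2(\IP)$ and using the boundedness of $Z$ to interchange limits with conditional expectations. A secondary, routine point is making the error terms $O(h)$ and $O(\sqrt h)$ uniform in $\omega$, which follows from $\|Z\|_{\infty,t}<\infty$, $\sigma\ge\epsilon$ and the boundedness hypotheses; choosing the continuous version of $\partial_{x_1}u$ in the second part is precisely what upgrades the a.e.\ identity to the stated pointwise equality $Z_s(\omega)=\partial_{x_1}u(s,X^{(1)}_s(\omega),X^{(2)}_s(\omega),X^{(3)}_s(\omega))$.
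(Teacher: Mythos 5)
Your proposal is correct, and its core mechanism is the same as the paper's: freeze the slow components $X^{(2)},X^{(3)}$ over a short interval $[s,s+h]$ (their increments are $O(h)$ uniformly, by boundedness of $Z$, $\sigma\ge\epsilon$ and boundedness of $\mu/\sigma^2$, combined with the Lipschitz property of $u$ in those arguments), exploit that only $X^{(1)}=x^{(1)}+W$ carries the Brownian fluctuation, and pass to the limit $h\downarrow0$. For the identity $Z_s=\u{1}{s}$ your argument is essentially identical to the paper's: both compute $h^{-1}\E\left[(Y_{s+h}-Y_s)(W_{s+h}-W_s)\,\middle|\,\cF_s\right]=h^{-1}\E\left[\int_s^{s+h}Z_r\,\d r\,\middle|\,\cF_s\right]$, apply Gaussian integration by parts to the frozen term, and conclude via continuity of $\diff{x_1}{u}$ in $(t,x_1)$, boundedness and dominated convergence. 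Where you genuinely deviate is the inequality: the paper extracts it from the \emph{same} cross-moment computation, bounding $\left|\int_\IR \diff{x_1}{u}(s+h,\cdot)\,\phi(z)\,\d z\right|$ by $\sup_x|\diff{x_1}{u}(s+h,x)|$, so that one calculation yields both assertions; you instead use the conditional second moment, i.e.\ the conditional It\^o isometry $\E[(Y_{s+h}-Y_s)^2\mid\cF_s]=\E[\int_s^{s+h}Z_r^2\,\d r\mid\cF_s]$, together with the elementary fact that a $k$-Lipschitz function of a centred Gaussian with variance $h$ has conditional variance at most $k^2h$. Your variance route avoids integration by parts for the bound (it needs no differentiation of $u$ at all there, only Lipschitz continuity in $x_1$), which is a small conceptual gain, at the cost of needing the cross-term computation separately for the identity anyway. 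One caveat applies equally to both proofs: the passage from interval averages to the pointwise-in-$s$ statement (conditional Lebesgue differentiation of $r\mapsto Z_r$, which is only defined $\lambda\times\IP$-a.e.) is a genuine technicality; the paper silently asserts $h^{-1}\E[\int_s^{s+h}Z_r\,\d r\mid\cF_s]\to Z_s$ a.s., whereas you flag it and sketch the standard fix via Lebesgue points of $r\mapsto Z_r$ in $L^1(\IP)$/$L^2(\IP)$ and boundedness of $Z$ — this delivers the conclusion for a.e.\ $(s,\omega)$, which is all that is needed for $\Vert Z\Vert_{\infty,t}\le\Vert\diff{x_1}{u}\Vert_{\infty,t}$ and for the a.e.\ identity stated in the lemma.
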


\begin{proof}
Observe that with It\^o's formula we get for $h > 0$ and $s, s+h \in [t,1]$
\begin{align*}
\frac{1}{h} \E \left[ \left. Y_{s+h} ( W_{s+h} - W_s ) \right\vert \mathcal{F}_s \right]
&= \frac{1}{h} \E \left[ \left. \int_s^{s+h} Y_r \d W_r + \int_s^{s+h} (W_{r} - W_s) Z_r \d W_r + \int_s^{s+h} Z_r \d r \right\vert \mathcal{F}_s \right] \\
&= \frac{1}{h} \E \left[ \left. \int_s^{s+h} Z_r \d r \right\vert \mathcal{F}_s \right] \\
&\rightarrow  Z_s \ \ a.s. \quad \text{ for } \quad h \rightarrow 0.
\end{align*}
On the other hand we get, using the decoupling condition $Y_r = u \left( r, \X{1}{r}, \X{2}{r}, \X{3}{r} \right)$, that
\begin{align}
Y_{s+h} &( W_{s+h} - W_s ) \nonumber \\
=& u \left( s+h, \X{1}{s+h}, \X{2}{s+h}, \X{3}{s+h} \right) (W_{s+h} - W_s ) \nonumber \\
=& u \left( s+h, \X{1}{s+h}, \X{2}{s}, \X{3}{s} \right) (W_{s+h} - W_s ) \label{est:YdWForZ} \\
& + \left( u \left( s+h, \X{1}{s+h}, \X{2}{s+h}, \X{3}{s} \right) - u \left( s+h, \X{1}{s+h}, \X{2}{s}, \X{3}{s} \right) \right) (W_{s+h} - W_s ) \nonumber \\
& + \left( u \left( s+h, \X{1}{s+h}, \X{2}{s+h}, \X{3}{s+h} \right) - u \left( s+h, \X{1}{s+h}, \X{2}{s+h}, \X{3}{s} \right) \right) (W_{s+h} - W_s ). \nonumber
\end{align}
At first let us take a look at the third summand on the right hand side of \eqref{est:YdWForZ}. Since $u$ is Lipschitz continuous in its fourth argument on $[t,1]$ with some constant $L_{u,x_3}^t$ that might depend on $t$ and since furthermore $\X{3}{s+h} = \X{3}{s} + \int_s ^{s+h} \mu_r \frac{ Z_r ^2 }{ \sigma^2 (\X{2}{r}, Y_r + \X{3}{r})} \d r$ we can estimate the absolute value of the third summand against
\begin{align*}
\frac{1}{h} & \left\vert \E \left[ \left. \left( u \left( s+h, \X{1}{s+h}, \X{2}{s+h}, \X{3}{s+h} \right) - u \left( s+h, \X{1}{s+h}, \X{2}{s+h}, \X{3}{s} \right) \right) (W_{s+h} - W_s ) \right\vert \mathcal{F}_s \right] \right\vert \\
& \leq \frac{1}{h} \E \left[ \left. \left\vert  u \left( s+h, \X{1}{s+h}, \X{2}{s+h}, \X{3}{s+h} \right) - u \left( s+h, \X{1}{s+h}, \X{2}{s+h}, \X{3}{s} \right) \right\vert \left\vert W_{s+h} - W_s  \right\vert \right\vert \mathcal{F}_s \right] \\
& \leq \frac{1}{h} \E \left[ \left. L_{u,x_3}^t \left\vert \int_s ^{s+h} \mu_r \frac{ Z_r^2 }{ \sigma^2 ( \X{2}{r}, Y_r + \X{3}{r} ) } \d r \right\vert \left\vert W_{s+h} - W_s  \right\vert \right\vert \mathcal{F}_s \right] \\
& \leq \frac{1}{h} L_{u,x_3}^t h \Big\Vert \frac{\mu}{\sigma^2} \Big\Vert_\infty \Vert Z \Vert_{\infty,t} ^2 \E \left[ \left. \left\vert W_{s+h} - W_s  \right\vert \right\vert \mathcal{F}_s \right],
\end{align*}
which clearly goes to $0$ as $h \rightarrow 0$ because $\Vert \frac{\mu}{\sigma^2} \Vert_\infty$ and $\Vert Z \Vert_{\infty,t}$ are finite on $[t,1]$.

With analogous arguments we also get that
\begin{align*}
\frac{1}{h} & \left\vert \E \left[ \left. \left( u \left( s+h, \X{1}{s+h}, \X{2}{s+h}, \X{3}{s} \right) - u \left( s+h, \X{1}{s+h}, \X{2}{s}, \X{3}{s} \right) \right) (W_{s+h} - W_s ) \right\vert \mathcal{F}_s \right] \right\vert \\
& \leq \frac{1}{h} \E \left[ \left. \left\vert  u \left( s+h, \X{1}{s+h}, \X{2}{s+h}, \X{3}{s} \right) - u \left( s+h, \X{1}{s+h}, \X{2}{s}, \X{3}{s} \right) \right\vert \left\vert W_{s+h} - W_s  \right\vert \right\vert \mathcal{F}_s \right] \\
& \leq \frac{1}{h} \E \left[ \left. L_{u,x_2}^t \left\vert \int_s ^{s+h} \frac{ Z_r^2 }{ \sigma^2 ( \X{2}{r}, Y_r + \X{3}{r} ) } \d r \right\vert \left\vert W_{s+h} - W_s  \right\vert \right\vert \mathcal{F}_s \right] \\
& \leq \frac{1}{h} L_{u,x_2}^t h \Vert Z \Vert_{\infty,t} ^2 \epsilon^{-2} \E \left[ \left. \left\vert W_{s+h} - W_s  \right\vert \right\vert \mathcal{F}_s \right] \\
& \rightarrow 0 \ \ a.s. \quad \text{ for } \quad h \rightarrow 0,
\end{align*}
where $L_{u,x_2}^t$ is the Lipschitz constant of $u$ in the third argument on the time interval $[t,1]$.

Now consider the remaining first term on the right hand side of Equation \eqref{est:YdWForZ}. For this remember
\begin{itemize}
\item $\X{1}{s}$, $\X{2}{s}$, $\X{3}{s}$ are $\mathcal{F}_s$ measurable,
\item $\X{1}{s+h} = \X{1}{s} + (W_{s+h} - W_s )$,
\item $W_{s+h} - W_s$ is independent of $\mathcal{F}_s$,
\item $u$ is deterministic, i.e.\ is %can be assumed to be 
a function of $(s,x^{(1)},x^{(2)},x^{(3)}) \in [t,1] \times \IR \times \IR \times \IR$ only.
\end{itemize}
Using integration by parts these properties imply
\begin{align*}
&\E \left[ \left. u \left( s+h, \X{1}{s+h}, \X{2}{s}, \X{3}{s} \right) ( W_{s+h} - W_s) \right| \mathcal{F}_s \right] \\
& \hspace*{5cm} = \int_\IR u \left( s+h, \X{1}{s} + z \sqrt{h}, \X{2}{s}, \X{3}{s} \right) z \sqrt{h} \frac{1}{\sqrt{2 \pi}} e^{-\frac{1}{2} z^2} \d z \\
& \hspace*{5cm} = \int_\IR \diff{x_1}{u} \left( s+h, \X{1}{s} + z \sqrt{h} , \X{2}{s}, \X{3}{s} \right) h \frac{1}{\sqrt{2 \pi}} e^{-\frac{1}{2} z^2} \d z.
\end{align*}
Hence
\begin{align*}
& \left\vert \frac{1}{h} \E \left[ \left. u \left( s+h, \X{1}{s+h}, \X{2}{s}, \X{3}{s} \right) ( W_{s+h} - W_s) \right| \mathcal{F}_s \right] \right\vert \\
& \hspace*{5cm} = \left\vert \int_\IR \diff{x_1}{u} \left( s+h, \X{1}{s} + z \sqrt{h} , \X{2}{s}, \X{3}{s} \right) \frac{1}{\sqrt{2 \pi}} e^{-\frac{1}{2} z^2} \d z \right\vert \\
& \hspace*{5cm} \leq \int_\IR \sup_{x \in \IR^3} \vert \diff{x_1}{u} (s+h,x) \vert \frac{1}{\sqrt{2 \pi}} e^{-\frac{1}{2} z^2} \d z \\
& \hspace*{5cm} = \sup_{x \in \IR^3} \vert \diff{x_1}{u} (s+h,x) \vert.
\end{align*}
Putting everything together we get
\begin{align*}
\vert Z_s \vert
&= \lim_{h \searrow 0} \left\vert \frac{1}{h} \E \left[ \left. \int_s ^{s+h} Z_r \d r \right\vert \mathcal{F}_s \right] \right\vert \\
&= \lim_{h \searrow 0} \left\vert \frac{1}{h} \E \left[ \left. Y_{s+h} ( W_{s+h} - W_s ) \right\vert \mathcal{F}_s \right] \right\vert \\
&= \lim_{h \searrow 0} \left\vert \frac{1}{h} \E \left[ \left. u \left( s+h, \X{1}{s+h}, \X{2}{s}, \X{3}{s} \right) ( W_{s+h} - W_s) \right| \mathcal{F}_s \right] \right. \\
& \left. \hspace*{0.6cm} + \frac{1}{h} \E \left[ \hspace*{-0.5mm} \left. \left( u \left( s \hspace*{-0.2mm} + \hspace*{-0.2mm} h, \X{1}{s+h}, \X{2}{s+h}, \X{3}{s} \right) - u \left( s+h, \X{1}{s+h}, \X{2}{s}, \X{3}{s} \right) \hspace*{-0.3mm} \right) (W_{s+h} - W_s ) \right\vert \mathcal{F}_s \right] \right. \\
& \left. \hspace*{0.6cm} + \frac{1}{h} \E \left[ \left. \hspace*{-0.5mm} \left( u \left( s \hspace*{-0.2mm} + \hspace*{-0.2mm} h, \X{1}{s+h}, \X{2}{s+h}, \X{3}{s+h} \right) - u \left( s+h, \X{1}{s+h}, \X{2}{s+h}, \X{3}{s} \right) \hspace*{-0.3mm} \right) (W_{s+h} - W_s ) \right\vert \mathcal{F}_s \right] \right\vert \\
& \leq \limsup_{h \searrow 0} \sup_{x \in \IR^3} \vert \diff{x_1}{u} (s+h,x) \vert + \vert 0 \vert + \vert 0 \vert \\
& \leq \sup_{r \in (s,1]} \sup_{x \in \IR^3} \vert \diff{x_1}{u} (r,x) \vert.
\end{align*}

If we have that $\diff{x_1}{u}$ is continuous in the first two arguments, we can derive, by using dominated convergence since $\u{1}{}$ is bounded on $[t,1]$, the more precise result
\begin{align*}
Z_s =& \lim_{h \searrow 0} \frac{1}{h} \E \left[ \left. u \left( s+h, \X{1}{s+h}, \X{2}{s}, \X{3}{s} \right) ( W_{s+h} - W_s) \right| \mathcal{F}_s \right] \\
=& \int_\IR \lim_{h \searrow 0} \diff{x_1}{u} \left( s+h, \X{1}{s} + z \sqrt{h} , \X{2}{s}, \X{3}{s} \right) \frac{1}{\sqrt{2 \pi}} e^{-\frac{1}{2} z^2} \d z \\
=& \diff{x_1}{u} \left( s, \X{1}{s}, \X{2}{s}, \X{3}{s} \right)
\end{align*}
almost surely.
\end{proof}

To obtain estimates for the gradient processes we use the following result.

\begin{lemma}[See \cite{MPF91}, p. 362]
\label{lemma:genaralGronwall}
Let the function $f$ be continuous and non-negative on $J=[\alpha,\beta]$, $a,b \geq 0$, and $n$ be a positive integer $(n \geq 2)$. If
\begin{equation*}
f(t) \leq a + b \int_\alpha^t f^n(s) \d s, \quad t \in J,
\end{equation*}
then
\begin{equation*}
f(t) \leq a \left[ 1 - (n-1) \int_\alpha^t a^{n-1} b \d s \right]^{\frac{1}{1-n}}, \quad \alpha \leq t \leq \beta_n,
\end{equation*}
where $\beta_n = \sup \left\{ t \in J : (n-1) \int_\alpha^t a^{n-1} b \d s < 1 \right\}$.
\end{lemma}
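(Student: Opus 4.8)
The plan is to prove this by the classical comparison argument for Bihari-type integral inequalities, reducing the integral inequality to a separable differential inequality for a suitable majorant.

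First I would fix $\epsilon>0$ and introduce the majorising function
$$ F_\epsilon(t) := a + \epsilon + b\int_\alpha^t f^n(s)\,\d s, \qquad t\in J. $$
Since $f$ is continuous and non-negative, $F_\epsilon$ is continuously differentiable, non-decreasing, satisfies $F_\epsilon(\alpha)=a+\epsilon>0$, and by hypothesis $f(t)\le a+b\int_\alpha^t f^n\,\d s\le F_\epsilon(t)$. The perturbation $\epsilon$ guarantees $F_\epsilon\ge a+\epsilon>0$ on all of $J$, which avoids any division-by-zero issue and, as a by-product, lets me cover the degenerate case $a=0$ by letting $\epsilon\downarrow 0$ at the end.

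Next I would exploit the monotonicity of $r\mapsto r^n$ on $[0,\infty)$: from $0\le f\le F_\epsilon$ we obtain the differential inequality $F_\epsilon'(t)=b\,f^n(t)\le b\,F_\epsilon^n(t)$. Dividing by $F_\epsilon^n(t)>0$ and integrating from $\alpha$ to $t$ gives
$$ \frac{F_\epsilon^{1-n}(t)-(a+\epsilon)^{1-n}}{1-n}\le b\,(t-\alpha). $$
Here lies the one delicate point: since $n\ge 2$ we have $1-n<0$, so multiplying through by $1-n$ \emph{reverses} the inequality, yielding the lower bound $F_\epsilon^{1-n}(t)\ge (a+\epsilon)^{1-n}\big(1-(n-1)(a+\epsilon)^{n-1}b(t-\alpha)\big)$; then raising to the power $1/(1-n)$ reverses it once more, because $r\mapsto r^{1/(1-n)}$ is decreasing on $(0,\infty)$, delivering
$$ F_\epsilon(t)\le (a+\epsilon)\Big[\,1-(n-1)\int_\alpha^t (a+\epsilon)^{n-1}b\,\d s\,\Big]^{\frac{1}{1-n}}. $$

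Finally I would combine $f\le F_\epsilon$ with this estimate, restrict to those $t$ for which the bracket is strictly positive, and let $\epsilon\downarrow 0$; by continuity of the right-hand side in $\epsilon$ this produces exactly the claimed bound on $[\alpha,\beta_n]$ (at $t=\beta_n$ the bracket vanishes and the right-hand side is $+\infty$, so the inequality holds trivially there). The only genuine obstacle is bookkeeping the two inequality reversals forced by $1-n<0$, together with verifying that the bracketed term remains positive precisely for $t<\beta_n$, which is exactly how $\beta_n$ is defined.
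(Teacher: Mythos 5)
The paper offers no proof of this lemma at all---it is quoted directly from \cite{MPF91}, p.~362---so there is nothing internal to compare your argument against. Your proof is correct and is the standard comparison/separation-of-variables argument for this Bihari-type inequality: the majorant $F_\epsilon$ satisfies $F_\epsilon' \le b F_\epsilon^n$, integrating $F_\epsilon^{-n}F_\epsilon'$ gives the bound after the two sign reversals you flag, and the $\epsilon$-perturbation both avoids division by zero and correctly settles the degenerate case $a=0$ (where the conclusion forces $f\equiv 0$). One cosmetic remark: at $t=\beta_n$ the bracket need not vanish (this occurs when $\beta_n=\beta$ and the constraint is never active), but in that case your limiting argument applies verbatim, so the endpoint is covered either way.
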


\begin{lemma}
\label{lemma:u3=-1:u1Bounded}
Assume that $g$, $\mu$ and $\sigma$ are differentiable, $\sigma \geq \epsilon > 0$ and $g'$, $\frac{\mu}{\sigma^2}$, $\frac{\diff{t}{\mu}}{\sigma^2}$, $\frac{\diff{a}{\mu}}{\sigma^2}$, $\frac{\diff{t}{\sigma}}{\sigma}$, $\frac{\diff{a}{\sigma}}{\sigma}$ are bounded.
Let $u$ be the unique decoupling field of the FBSDE \eqref{fbsde:base}. Then for any $t \in I^M_{\mathrm{max}}$ and initial condition $(\X{1}{t}, \X{2}{t}, \X{3}{t}) = x \in \IR^3$ the associated gradient process $\u{3}{}$ satisfies for all $s \in [t,1]$
\begin{equation*}
\u{3}{s} = -1.
\end{equation*}
If we additionally assume that $\sigma_{a,s} \cdot \u{2}{s} \geq 0$ a.s.\ for all $s \in [t,1]$ and
\begin{equation*}
\inf_{(\theta,x)\in \IR_+ \times \IR} \frac{ \sigma \cdot \diff{a}{\mu} - 2 \diff{a}{\sigma} \cdot \mu }{\sigma^3}(\theta, x) > - \frac{1}{ 2 \Vert g' \Vert_\infty^2},
\end{equation*}
then it also holds that
\begin{equation*}
0 \leq \u{1}{s} \leq \left( \frac{1}{\Vert g' \Vert_\infty ^2} + 2 \min \left\{0, \inf_{ (\theta,x) \in \IR_+ \times \IR } \left( \frac{ \sigma \cdot \diff{a}{\mu} - 2 \diff{a}{\sigma} \cdot \mu }{\sigma^3} \right) (\theta,x) \right\} \right)^{-\frac{1}{2}} < \infty
\end{equation*}
for all $s \in [t,1]$.
\end{lemma}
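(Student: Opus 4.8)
The plan is to read off all three claims from the gradient dynamics \eqref{bsdes:ux} of Lemma \ref{lemma:dynamicsFirstDiff}, exploiting that the backward equation for $\u{3}{}$ effectively decouples from the other two.

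\emph{Step 1 ($\u{3}{}\equiv-1$).} I first observe that the driver of the $\u{3}{}$-equation in \eqref{bsdes:ux} carries the factor $(\u{3}{r}+1)$ and that the terminal value is $\u{3}{1}=-1$ (indeed $\xi(x')=g(x'_1)-x'_3$, so $\partial_{x_3}\xi\equiv-1$). Hence $(\u{3}{},\Z{3}{})\equiv(-1,0)$ solves this BSDE. Setting $V_r:=\u{3}{r}+1$, the process $V$ satisfies a linear BSDE $V_s=\int_s^1 V_r\,G_r\,\d r-\int_s^1\Z{3}{r}\,\d\W{r}$ with vanishing terminal value, whose coefficient $G_r$ is bounded because $\u{2}{}$, $\u{3}{}$ and $Z$ are bounded on $[t,1]$ and $\frac{\mu}{\sigma^2}$, $\frac{\diff{a}{\mu}}{\sigma^2}$, $\frac{\diff{a}{\sigma}}{\sigma}$ are bounded. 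A bounded solution of such a BSDE is unique and identically $0$, so $\u{3}{s}=-1$ for all $s\in[t,1]$.

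\emph{Step 2 (sign of $\u{1}{}$ and of its coefficient).} Inserting $\u{3}{}\equiv-1$ turns the $\u{1}{}$-equation into a linear BSDE $\u{1}{s}=g'(\X{1}{1})+\int_s^1\u{1}{r}\beta_r\,\d r-\int_s^1\Z{1}{r}\,\d\W{r}$ with bounded coefficient $\beta_r=\frac{Z_r^2}{\sigma_r^2}\big(-(\mu_{a,r}-2\mu_r\frac{\sigma_{a,r}}{\sigma_r})-2\u{2}{r}\frac{\sigma_{a,r}}{\sigma_r}\big)$. Working under the equivalent measure $\widetilde\IP$ from Lemma \ref{lemma:dynamicsFirstDiff} (under which $\W{}$ is a Brownian motion and, by the BMO bounds established there, the stochastic integral is a true martingale) I get $\u{1}{s}=\E^{\widetilde\IP}[g'(\X{1}{1})\exp(\int_s^1\beta_r\,\d r)\mid\mathcal F_s]$; since $g=F_\nu^{-1}\circ\Phi$ is non-decreasing, $g'\ge0$ and hence $\u{1}{s}\ge0$. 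Writing $\kappa:=\inf_{(\theta,x)}\frac{\sigma\diff{a}{\mu}-2\diff{a}{\sigma}\,\mu}{\sigma^3}(\theta,x)$, the identity $\frac1{\sigma_r^2}(\mu_{a,r}-2\mu_r\frac{\sigma_{a,r}}{\sigma_r})=\frac{\sigma_r\mu_{a,r}-2\sigma_{a,r}\mu_r}{\sigma_r^3}\ge\kappa$, together with the hypothesis $\sigma_{a,r}\u{2}{r}\ge0$ (which makes the last summand non-positive), yields $\beta_r\le-\kappa\,Z_r^2$.

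\emph{Step 3 (deterministic inequality and generalized Gronwall).} Combining $\u{1}{r}\ge0$ with $|\u{1}{r}|\le W(r)$ and $|Z_r|\le W(r)$ from Lemma \ref{lemma:ZequalsU1}, where $W(r):=\|\diff{x_1}{u}\|_{\infty,r}$, I obtain $\u{1}{r}\beta_r\le c\,W(r)^3$ with $c:=-\min\{0,\kappa\}\ge0$. Taking $\widetilde\IP$-conditional expectation and bounding the terminal value by $\|g'\|_\infty$ gives $\u{1}{s}\le\|g'\|_\infty+c\int_s^1 W(r)^3\,\d r$ a.s. Specialising to initial time $t=\rho$ with constant initial value identifies $\sup_x\diff{x_1}{u}(\rho,x)$ with the essential supremum of $\u{1}{\rho}$; taking the supremum over $\rho\in[s,1]$ turns this into the closed deterministic inequality $W(s)\le\|g'\|_\infty+c\int_s^1 W(r)^3\,\d r$. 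After the reversal $f(\rho):=W(1-\rho)$ this reads $f(\rho)\le\|g'\|_\infty+c\int_0^\rho f(\rho')^3\,\d\rho'$, to which I apply Lemma \ref{lemma:genaralGronwall} with $n=3$, $a=\|g'\|_\infty$, $b=c$. The assumption $\kappa>-\frac1{2\|g'\|_\infty^2}$ is precisely $2\|g'\|_\infty^2 c<1$, so $\beta_3>1$ and the bound is valid on all of $[0,1]$; evaluating at $\rho=1-s\le1$ and simplifying $\|g'\|_\infty(1-2\|g'\|_\infty^2 c)^{-1/2}=(\frac1{\|g'\|_\infty^2}+2\min\{0,\kappa\})^{-1/2}$ gives the asserted upper bound, while $\u{1}{s}\le W(s)$ and Step 2 give the full two-sided estimate.

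\emph{Main obstacle.} The delicate point is Step 3: passing from the pathwise BSDE to a closed deterministic inequality for $W$. This will require (i) identifying the supremum of the gradient process over constant initial data with $\sup_x\diff{x_1}{u}$, using that the starting time and initial value are free; (ii) care that $W$ is only monotone a priori, so that before invoking Lemma \ref{lemma:genaralGronwall} — stated for continuous $f$ — I pass to the absolutely continuous majorant $\Psi(s):=\|g'\|_\infty+c\int_s^1 W(r)^3\,\d r$, which satisfies the same inequality since $W\le\Psi$; and (iii) keeping every constant independent of $t\in I^M_{\mathrm{max}}$ and $x$, which is exactly what will later let Lemma \ref{EXPlosionM} be contradicted. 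The algebraic coincidence matching $2\|g'\|_\infty^2 c<1$ with the hypothesis on $\kappa$ is the mechanism forcing the Gronwall blow-up time past the horizon $1$.
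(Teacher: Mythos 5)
Your proposal is correct and follows essentially the same route as the paper's own proof: the factor $(\u{3}{r}+1)$ in the driver forces $\u{3}{}\equiv-1$ as the unique bounded solution, non-negativity of $\u{1}{}$ follows from the linear BSDE with terminal value $g'(\X{1}{1})\ge 0$ (where the paper invokes the comparison theorem you use the equivalent exponential representation, which is how that comparison is proved anyway), and the upper bound is obtained from the same self-referential inequality for $\sup_{\rho,x}\diff{x_1}{u}$ — built from Lemma \ref{lemma:ZequalsU1} and the sign hypothesis $\sigma_{a,r}\u{2}{r}\ge 0$ — closed by Lemma \ref{lemma:genaralGronwall} with $n=3$, the hypothesis on $\inf(\sigma\,\diff{a}{\mu}-2\,\diff{a}{\sigma}\,\mu)/\sigma^3$ being exactly what pushes the Gronwall blow-up time past $1$. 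Your additional care with the time reversal and with the continuity requirement of the Gronwall lemma (via the absolutely continuous majorant $\Psi$) only makes explicit steps the paper glosses over.
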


\begin{proof}
By interpreting \eqref{bsdes:ux} as a system of BSDEs we get for $\u{3}{}$ the trivial solution $\u{3}{s} = -1$ for all $s \in [t,1]$ as the unique bounded solution of this BSDE. %reference that this is the only solution since \u{1}{s} is bounded on [t,1]

Also  note that $g'\geq 0$ since $g = F_\nu^{-1} \circ \Phi$ and $F_\nu$ as well as $\Phi$ are non-decreasing. Thus $\check{u}_s = 0$ is the trivial and unique solution to
\begin{equation*}
\check{u}_s = 0 + \int_s^1 - \check{u}_r \frac{Z_r^2}{\sigma_r^2 } \left( - \mu_{a,r} + 2 \mu_r \frac{\sigma_{a,r}}{\sigma_r} - 2 \u{2}{r} \frac{\sigma_{a,r}}{\sigma_r }\right) \d r - \int_s^1 \Z{1}{r} \d \W{r},
\end{equation*}
which implies by comparison that $0= \check{u}_s \leq \u{1}{s}$ for all $s \in [t,1]$.

For the upper bound of $\u{1}{}$ remember that $\u{1}{s} = \diff{x_1}{u}(s, \X{1}{s}, \X{2}{s}, \X{3}{s} )$ for all $s \in [t,1]$ and in particular for any fixed $t \in I^M_{\mathrm{max}}$ and all starting conditions $x = (\x{1}, \x{2}, \x{3})\in \IR^3$ we have
\begin{equation*}
\diff{x_1}{u} (t, x )
%= \diff{x_1}{u}(t, \X{1}{t}, \X{2}{t}, \X{3}{t} )
= \u{1}{t}
= g'\left( \X{1}{1} \right) -  \int_t^1 \u{1}{r} \frac{Z_r^2}{\sigma_r^2 } \left( \mu_{a,r} - 2 \mu_r \frac{\sigma_{a,r}}{\sigma_r} + 2 \u{2}{r} \frac{\sigma_{a,r}}{\sigma_r } \right) \d r - \int_t^1 \Z{1}{r} \d \W{r}.
\end{equation*}
Using this and that $Z$ is bounded on every interval $[t,1] \subset I_{\mathrm{max}}^M$, we get
\begin{align*}
\u{1}{t}
&= \E \left[ \left. \u{1}{t} \right\vert \mathcal{F}_t \right] \\
&= \E \left[ \left. g'\left( \X{1}{1} \right) -  \int_t^1 \u{1}{r} \frac{Z_r^2}{\sigma_r^2 } \left( \mu_{a,r} - 2 \mu_r \frac{\sigma_{a,r}}{\sigma_r} + 2 \u{2}{r} \frac{\sigma_{a,r}}{\sigma_r } \right) \d r \right\vert \mathcal{F}_t \right] \\
&\leq \E \left[ \left. g'\left( \X{1}{1} \right) - \int_t^1 \u{1}{r} \frac{Z_r^2}{\sigma_r^2} \left( \mu_{a,r} - 2\frac{\sigma_{a,r}}{\sigma_r} \mu_r \right) \d r \right\vert \mathcal{F}_t \right]
%\\ &\leq \Vert g' \Vert_\infty + \max \left\{0, -\inf_{ (\theta,x) \in \IR_+ \times \IR } \left( \frac{ \sigma \cdot \diff{a}{\mu} - 2 \diff{a}{\sigma} \cdot \mu }{\sigma^3} \right)(\theta,x) \right\} \int_t^1 \sup_{x \in \IR^3} \diff{x_1}{u} (r,x) \sup_{\theta \in [r,1]} \sup_{x \in \IR^3} \left(\diff{x_1}{u}\right)^2 (\theta, x) \d r
\end{align*}
for all $t \in I^M_{\mathrm{max}}$ and $(\x{1}, \x{2}, \x{3})\in \IR^3$, where we use that $\sigma_{a,r} \cdot \u{2}{r} \geq 0$. Next we use the inequality
\begin{equation*}
-\frac{ \sigma_s \mu_{a,s} - 2 \sigma_{a,s} \mu_r }{\sigma_s^3} \leq \max \left\{0, -\inf_{ (\theta,x) \in \IR_+ \times \IR } \left( \frac{ \sigma \cdot \diff{a}{\mu} - 2 \diff{a}{\sigma} \cdot \mu }{\sigma^3} \right)(\theta,x) \right\} =: \beta
\end{equation*}
and the estimate from Lemma \ref{lemma:ZequalsU1} for $Z$ to obtain
\begin{equation*}
\u{1}{t} \leq \Vert g' \Vert_\infty + \beta \int_t^1 \sup_{x \in \IR^3} \diff{x_1}{u} (r,x) \sup_{\theta \in [r,1]} \sup_{x \in \IR^3} \left(\diff{x_1}{u}\right)^2 (\theta, x) \d r.
\end{equation*}
Thus we can derive the inequality
\begin{align*}
\sup_{\rho \in [t,1]} \sup_{x \in \IR^3} \diff{x_1}{u}(\rho, x )
& \leq \Vert g' \Vert_\infty + \beta \sup_{\rho \in [t,1]} \left\{ \int_\rho^1 \sup_{x \in \IR^3} \diff{x_1}{u} (r,x) \sup_{\theta \in [r,1]} \sup_{x \in \IR^3} \left(\diff{x_1}{u}\right)^2 (\theta, x) \d r \right\} \\
& \leq \Vert g' \Vert_\infty + \beta \int_t^1 \sup_{\theta \in [r,1]} \sup_{x \in \IR^3} \left(\diff{x_1}{u}\right)^3 (\theta, x) \d r.
\end{align*}
Note that $\inf_{(\theta,x)\in \IR_+ \times \IR} \frac{ \sigma \cdot \diff{a}{\mu} - 2 \diff{a}{\sigma} \cdot \mu }{\sigma^3}(\theta, x) > - \frac{1}{ 2 \Vert g' \Vert_\infty^2}$ implies $\beta < \frac{1}{2 \Vert g' \Vert_\infty^2}$. Hence, we obtain by setting $f(t)= \sup_{\rho \in [t,1]} \sup_{x \in \IR^3} \diff{x_1}{u}(\rho, x )$ and applying Lemma \ref{lemma:genaralGronwall} that
\begin{equation*}
\sup_{\rho \in [t,1]} \sup_{x \in \IR^3} \diff{x_1}{u}(\rho, x )
\leq \left( \frac{1}{\Vert g' \Vert_\infty ^2} - 2 \beta (1-t) \right)^{-\frac{1}{2}}
\end{equation*}
and thus,
\begin{equation*}
\Vert \u{1}{} \Vert_{\infty,t} \leq \Vert \diff{x_1}{u} \Vert_{\infty,t} \leq \left( \frac{1}{\Vert g' \Vert_\infty ^2} - 2 \beta \right)^{-\frac{1}{2}}< \infty.
\end{equation*}
\end{proof}

\begin{theorem}
\label{thm:U1isBounded}
Let $g$, $\mu$ and $\sigma$ fulfill Assumption \ref{asump:conditionsU1bounded}. Then, for FBSDE \eqref{fbsde:base}, we have $I^M_{\mathrm{max}} = [0,1]$ and there exists a unique, strongly regular Markovian decoupling field $u$ on the whole interval $[0,1]$. This $u$ is a continuous function on $[0,1]\times\IR^3$.

Furthermore let $(\X{1}{}, \X{2}{}, \X{3}{}, Y, Z)$ be the solution of FBSDE \eqref{fbsde:base} with an arbitrary initial condition $x \in \IR^3$ and $\u{1}{}, \u{2}{}, \u{3}{}$ be the associated gradient processes on $[0,1]$.
Then we have $\u{3}{} \equiv -1$ and the finite estimates
\begin{equation}
\label{est:u1}
0 \leq \u{1}{} \leq \left( \frac{1}{\Vert g' \Vert_\infty ^2} + 2 \min \left\{0, \inf_{ (\theta,x) \in \IR_+ \times \IR } \left( \frac{ \sigma \cdot \diff{a}{\mu} - 2 \diff{a}{\sigma} \cdot \mu }{\sigma^3} \right) (\theta,x) \right\} \right)^{-\frac{1}{2}},
\end{equation}
\begin{align}
\label{est:u2}
\left\Vert \u{2}{} \right\Vert_\infty 
& \leq \exp \left[ \Vert Z \Vert_\infty^2 \left( \left\Vert \frac{\diff{a}{\mu}}{\sigma^2} \right\Vert_\infty + 2 \left( \left\Vert \frac{\diff{a}{\sigma}}{\sigma} \right\Vert_\infty \left\Vert \frac{\mu}{\sigma^2} \right\Vert_\infty + \frac{1}{\epsilon^2} \left\Vert \frac{\diff{t}{\sigma}}{\sigma} \right\Vert_\infty \right) \right) \right] \nonumber \\
& \hspace*{6cm} \cdot \Vert Z \Vert_\infty^2 \left( 2 \left\Vert \frac{\diff{t}{\sigma}}{\sigma} \right\Vert_\infty \left\Vert \frac{\mu}{\sigma^2} \right\Vert_\infty + \left\Vert \frac{\diff{t}{\mu}}{\sigma^2} \right\Vert_\infty \right)
\end{align}
and
\begin{equation}
\label{est:Z}
\left\Vert Z \right\Vert_\infty 
\leq \left\Vert \u{1}{} \right\Vert_\infty 
\leq \left( \frac{1}{\Vert g' \Vert_\infty ^2} + 2 \min \left\{0, \inf_{ (\theta,x) \in \IR_+ \times \IR } \left( \frac{ \sigma \cdot \diff{a}{\mu} - 2 \diff{a}{\sigma} \cdot \mu }{\sigma^3} \right) (\theta,x) \right\} \right)^{-\frac{1}{2}}.
\end{equation}
\end{theorem}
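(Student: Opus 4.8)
The plan is to combine the estimates already obtained in Lemmas \ref{lemma:ZequalsU1} and \ref{lemma:u3=-1:u1Bounded} with the explosion criterion of Lemma \ref{EXPlosionM}; the missing ingredient is a verification of the sign hypothesis $\sigma_{a,s}\cdot \u{2}{s}\geq 0$ required by Lemma \ref{lemma:u3=-1:u1Bounded}, together with the uniform bound \eqref{est:u2} on $\u{2}{}$. First I would fix an arbitrary interval $[t,1]\subseteq I^M_{\mathrm{max}}$ and work with the strongly regular decoupling field $u$ and its bounded gradient processes supplied there by Theorem \ref{GLObalexistM}. From Lemma \ref{lemma:u3=-1:u1Bounded} we immediately get $\u{3}{}\equiv -1$, so everything reduces to controlling $\u{1}{}$, $\u{2}{}$ and $Z$.

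The heart of the argument is the sign and size control of $\u{2}{}$. Substituting $\u{3}{}\equiv-1$ into the second line of \eqref{bsdes:ux}, its driver splits into a source term equal to $\frac{Z_r^2}{\sigma_r^3}(2\sigma_{t,r}\mu_r-\sigma_r\mu_{t,r})$, a linear term in $\u{2}{r}$, and the quadratic term $-2\frac{Z_r^2}{\sigma_r^2}\frac{\sigma_{a,r}}{\sigma_r}(\u{2}{r})^2$. Since $\u{2}{}$ is bounded on $[t,1]$ by strong regularity, I would absorb the quadratic term into the linear one, treating $-2\frac{Z_r^2}{\sigma_r^2}\frac{\sigma_{a,r}}{\sigma_r}\u{2}{r}$ as part of a \emph{bounded} linear coefficient, and represent $\u{2}{}$ through the linear-BSDE formula with its strictly positive exponential adjoint weight. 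The sign of $\u{2}{}$ then coincides with that of the source term. Distinguishing the cases of Assumption \ref{asump:conditionsU1bounded}: in case i) the factor $\diff{a}{\sigma}$ vanishes, so $\sigma_{a,s}\u{2}{s}=0$; in case ii) the source is nonnegative, giving $\u{2}{}\geq 0$, which together with $\diff{a}{\sigma}\geq 0$ yields $\sigma_{a,s}\u{2}{s}\geq 0$; case iii) is symmetric. This verifies the hypothesis of Lemma \ref{lemma:u3=-1:u1Bounded} and hence \eqref{est:u1}, while Lemma \ref{lemma:ZequalsU1} yields \eqref{est:Z}. For \eqref{est:u2} I would exploit the established sign once more: in case ii) the absorbed term $-2\frac{Z_r^2}{\sigma_r^2}\frac{\sigma_{a,r}}{\sigma_r}\u{2}{r}$ is nonpositive, hence may be dropped when estimating the positive part of the adjoint weight from above, leaving only the bounded linear contributions in the exponent and the $L^\infty$-norm of the source as the prefactor — precisely the shape of \eqref{est:u2}.

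The decisive observation is that the right-hand sides of \eqref{est:u1}, \eqref{est:u2} and \eqref{est:Z} depend only on $\|g'\|_\infty$, on $\epsilon$ and on the $L^\infty$-norms of the coefficient ratios listed in Assumption \ref{asump:conditionsU1bounded}, but not on the starting time $t$; in particular the $\|Z\|_\infty$ appearing in \eqref{est:u2} is itself controlled $t$-uniformly through \eqref{est:Z} and \eqref{est:u1}. Consequently all three gradient components, and therefore the Lipschitz constant $L_{u(t,\cdot)}$, are bounded by a finite constant $C$ independent of $t\in I^M_{\mathrm{max}}$. I would conclude by contradiction: if $I^M_{\mathrm{max}}=(t^M_{\mathrm{min}},1]$, then Lemma \ref{EXPlosionM} forces $L_{u(t,\cdot)}\to L_{\Sigma,z}^{-1}=\infty$ as $t\downarrow t^M_{\mathrm{min}}$, contradicting $L_{u(t,\cdot)}\leq C$. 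Hence $I^M_{\mathrm{max}}=[0,1]$, and Theorem \ref{GLObalexistM} delivers the unique, strongly regular, continuous decoupling field on $[0,1]$; since the bounds were derived uniformly in the starting time, the estimates then hold on all of $[0,1]$.

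I expect the main obstacle to be the control of $\u{2}{}$: because its BSDE is genuinely quadratic and self-referential, both the sign determination and the uniform bound hinge on the delicate case split of Assumption \ref{asump:conditionsU1bounded}, and one must be careful that the linearization coefficient is truly bounded — which uses boundedness of $\u{2}{}$ on the fixed subinterval, guaranteed by strong regularity — before the positivity and term-dropping arguments can be invoked. Once this is in place, the explosion/contradiction step is short, as the whole decoupling-field machinery was set up precisely to make it so.
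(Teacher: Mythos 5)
Your proposal is correct and takes essentially the same route as the paper's proof: $\u{3}{}\equiv -1$ from Lemma \ref{lemma:u3=-1:u1Bounded}, the case split of Assumption \ref{asump:conditionsU1bounded} to establish $\sigma_{a}\cdot\u{2}{}\geq 0$ by linearizing the quadratic $\u{2}{}$-BSDE (valid because $\u{2}{}$ is bounded on each $[t,1]$), Lemmas \ref{lemma:u3=-1:u1Bounded} and \ref{lemma:ZequalsU1} for \eqref{est:u1} and \eqref{est:Z}, a linear-BSDE estimate for \eqref{est:u2}, and the $t$-uniformity of all bounds combined with Lemma \ref{EXPlosionM} to conclude $I^M_{\mathrm{max}}=[0,1]$. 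The only cosmetic difference is that you phrase the sign and size control of $\u{2}{}$ via the explicit linear-BSDE representation with its positive exponential weight, whereas the paper invokes the comparison theorem (against the trivial solution, and then against the linear solution $\bar{u}$) — these are equivalent devices.
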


\begin{proof}
Using Lemma \ref{EXPlosionM} we only need to show that the weak derivative of $u$ with regard to the initial value $x \in \IR^3$ is bounded by some constant which is independent of the time interval $[t,1] \subset I_{\mathrm{max}}^M$ on which it is defined. Then it follows that $I^M_{\mathrm{max}} = [0,1]$ and hence $t$ can be chosen to equal $0$ and the estimates \eqref{est:u1}, \eqref{est:u2} and \eqref{est:Z} hold true for corresponding processes on the whole interval $[0,1]$.

For now fix $t \in I^M_{\mathrm{max}}$ and $x \in \IR^3$ and let $\u{1}{}, \u{2}{}, \u{3}{}$ be the associated gradient processes. 
Lemma \ref{lemma:u3=-1:u1Bounded} yields $\u{3}{} \equiv -1$. In order to derive Estimate \eqref{est:u1} we show that $\sigma_{a,s} \cdot \u{2}{s} \geq 0$ a.s.\ for all $s \in [t,1]$ which then allows us to apply Lemma \ref{lemma:u3=-1:u1Bounded} yielding the estimate. Consider the three cases $i)$, $ii)$ and $iii)$ of Assumption \ref{asump:conditionsU1bounded}: With $\diff{a}{\sigma} \equiv 0$ of case $i)$ this is obviously true. For the remaining two cases observe that
\begin{align*}
\u{2}{s} = \int_s^1 \frac{Z_r^2}{\sigma_r^2} \left[ \left( \u{2}{r} \right)^2 \left( -2 \frac{ \sigma_{a,r}}{\sigma_r} \right) + \u{2}{r} \left( - \mu_{a,r} + 2 \frac{\sigma_{a,r}}{\sigma_r} \mu_r - 2 \frac{\sigma_{t,r}}{\sigma_r}  \right) + \left( 2 \frac{\sigma_{t,r}}{\sigma_r} \mu_r - \mu_{t,r} \right) \right] & \d r \\
- \int_s^1 \Z{2}{r} & \d \W{r}.
\end{align*}
Because $\u{2}{r}$ is bounded on every interval $[t,1] \subset I^M_{\mathrm{max}}$, we can view $\u{2}{}$ as fulfilling a Lipschitz BSDE. This allows us to use the comparison theorem by changing $2 \frac{\sigma_{t,r}}{\sigma_r} \mu_r - \mu_{t,r}$ to zero and hence compare with the trivial solution which is constantly $0$. Thus in the case $ii)$ we have $\u{2}{} \geq 0$ and in case $iii)$ $\u{2}{} \leq 0$. Therefore, we have $\diff{a}{\sigma} \cdot \u{2}{} \geq 0$ for the cases $ii)$ and $iii)$ as well. Hence we can apply Lemma \ref{lemma:u3=-1:u1Bounded} to obtain, for $s \in [t,1]$, 
\begin{equation*}
0 \leq \u{1}{s} \leq \left( \frac{1}{\Vert g' \Vert_\infty ^2} + 2 \min \left\{0, \inf_{ (\theta,x) \in \IR_+ \times \IR } \left( \frac{ \sigma \cdot \diff{a}{\mu} - 2 \diff{a}{\sigma} \cdot \mu }{\sigma^3} \right) (\theta,x) \right\} \right)^{-\frac{1}{2}}.
\end{equation*}
In addition with Lemma \ref{lemma:ZequalsU1} this yields
\begin{equation*}
\Vert Z \Vert_{\infty,t} \leq \Vert \u{1}{} \Vert_{\infty,t} \leq \left( \frac{1}{\Vert g' \Vert_\infty ^2} + 2 \min \left\{0, \inf_{ (\theta,x) \in \IR_+ \times \IR } \left( \frac{ \sigma \cdot \diff{a}{\mu} - 2 \diff{a}{\sigma} \cdot \mu }{\sigma^3} \right) (\theta,x) \right\} \right)^{-\frac{1}{2}} < \infty.
\end{equation*}
Since, as stated before, in case $ii)$ we have $\u{2}{} \geq 0$ and $\diff{a}{\sigma} \geq 0$ and in case $iii)$ $\u{2}{} \leq 0$ and $\diff{a}{\sigma} \leq 0$, we again can apply the comparison theorem to see that in case $ii)$ we have $0 \leq \u{2}{} \leq \bar{u}$ and in case $iii)$ $\bar{u} \leq \u{2}{} \leq 0$, where $\bar{u}$ is the solution of the linear BSDE
\begin{equation*}
\bar{u}_s = \int_s^1 \bar{u}_r \frac{Z_r^2}{\sigma_r^2} \left( - \mu_{a,r} + 2 \frac{\sigma_{a,r}}{\sigma_r} \mu_r - 2 \frac{\sigma_{t,r}}{\sigma_r}  \right) + \frac{Z_r^2}{\sigma_r^2} \left( 2 \frac{\sigma_{t,r}}{\sigma_r} \mu_r - \mu_{t,r} \right) \d r - \int_s^1 \bar{Z}_r \d \W{r}.
\end{equation*}
In case $i)$ we have that $\u{2}{} = \bar{u}$ giving that $\u{2}{}$ is bounded by $\bar{u}$ as well.

By estimating
\begin{align*}
\left\vert \bar{u}_s \right\vert 
=& \left\vert \E \left[ \left. \int_s^1 \exp \left( \int_s ^r \frac{Z_r^2}{\sigma_r^2} \left( - \mu_{a,r} + 2 \frac{\sigma_{a,r}}{\sigma_r} \mu_r - 2 \frac{\sigma_{t,r}}{\sigma_r}  \right) \d \rho \right) \frac{Z_r^2}{\sigma_r^2} \left( 2 \frac{\sigma_{t,r}}{\sigma_r} \mu_r - \mu_{t,r} \right) \d r \right\vert \mathcal{F}_s \right] \right\vert \\
\leq & \exp \left[ \Vert Z \Vert_\infty^2 \left( \left\Vert \frac{\diff{a}{\mu}}{\sigma^2} \right\Vert_\infty + 2 \left( \left\Vert \frac{\diff{a}{\sigma}}{\sigma} \right\Vert_\infty \left\Vert \frac{\mu}{\sigma^2} \right\Vert_\infty + \frac{1}{\epsilon^2} \left\Vert \frac{\diff{t}{\sigma}}{\sigma} \right\Vert_\infty \right) \right) \right] \\
& \hspace*{7cm} \cdot \Vert Z \Vert_\infty^2 \left( 2 \left\Vert \frac{\diff{t}{\sigma}}{\sigma} \right\Vert_\infty \left\Vert \frac{\mu}{\sigma^2} \right\Vert_\infty + \left\Vert \frac{\diff{t}{\mu}}{\sigma^2} \right\Vert_\infty \right)
\end{align*}
we have found a finite bound for $\u{2}{}$ that is independent of $t$.

Thus $\u{1}{}$, $\u{2}{}$ and $\u{3}{}$ are bounded independently of $t$. Hence there exists a solution on the whole interval $[0,1] = I_{\mathrm{max}}^M$. Therefore, we also have that all bounds are valid on this interval.
\end{proof}

\section{Weak solution}
\label{sec:weakSolution} 

In this section we show that a weak solution of the SEP can be obtained from the solution of the FBSDE \eqref{fbsde:base}. Recall that if Assumption \ref{asump:conditionsU1bounded} is fulfilled, then by Theorem \ref{thm:U1isBounded} FBSDE \eqref{fbsde:base} has a solution on the whole interval $[0,1]$ and the gradient processes are bounded.

In the following we sometimes use the fact that for two It\^o processes $A$ and $B$ and a time change $\gamma$, in the sense of Definition 1.2 in Chapter V, \cite{RY13}, it holds that
\begin{equation*}
\int_0^{\gamma(t)} A_r \d B_r = \int_0^t A_{\gamma(r)} \d B_{\gamma(r)}
\end{equation*}
(see e.g.\ Proposition 1.4, Chapter V, \cite{RY13}).  

The next theorem is a version of Theorem \ref{thm:mainResultWeak} with an explicit weak solution of the \SEP. 
\begin{theorem}
\label{thm:weakSolution}
Let $g$, $\mu$ and $\sigma$ fulfill Assumption \ref{asump:conditionsU1bounded}. Furthermore let $(\X{1}{}, \X{2}{}, \X{3}{}, Y, Z)$ be the solution of the FBSDE \eqref{fbsde:base} with initial value $(\X{1}{0}, \X{2}{0}, \X{3}{0}) = (0,0,0)$. Define
the random time
\begin{equation*}
\tilde{\tau} := \X{2}{1},%\int_0^1 \frac{Z_r^2}{\sigma^2 (\X{2}{r}, Y_r + \X{3}{r} )} \d r,
\end{equation*}
the time change
\begin{equation*}
\gamma (t) := \begin{cases}
\inf \left\{ s \geq 0 \vert \X{2}{s} > t \right\} & \text{ if } 0 \leq t < \tilde{\tau}, \\
1 & \text{ if } t \geq \tilde{\tau},
\end{cases}
\end{equation*}
%with inverse $\gamma^{-1} (s) := \int_0^s \frac{Z_r^2}{\sigma (\X{2}{r})^2} \d r$ for all $s \in [0,1]$ 
the filtration $\cG_t := \cF_{\gamma(t)}$ and the process $A_t := Y_{\gamma(t)} + \X{3}{\gamma(t)}$ on $[0, \tilde{\tau}]$.

Then $\tilde{\tau}$ is a $(\cG_t)$-stopping time satisfying 
\begin{equation*}
\tilde{\tau} \leq \epsilon^{-2} \left( \frac{1}{\Vert g' \Vert_\infty ^2} + 2 \min \left\{0, \inf_{ (\theta,x) \in \IR_+ \times \IR } \left( \frac{ \sigma \cdot \diff{a}{\mu} - 2 \diff{a}{\sigma} \cdot \mu }{\sigma^3} \right) (\theta,x) \right\} \right)^{-1} \text{ a.s.}
\end{equation*}
Furthermore, on $[0, \tilde{\tau}]$, the process $B_t := \int_0^{t} \frac{1}{\sigma (\X{2}{\gamma(r)}, Y_{\gamma(r)} + \X{3}{\gamma(r)} )} \d Y_{\gamma (r)}$ is a $(\cG_t)$-Brownian motion, $A$ fulfills the SDE
\begin{equation*}
A_t = Y_0 + \int_0^t \mu \left( r, A_r \right) \d r + \int_0^t \sigma(r, A_r) \d B_r
\end{equation*}
and we have 
\begin{equation*}
A_{\tilde{\tau}} \sim \nu.
\end{equation*}
\end{theorem}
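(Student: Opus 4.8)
The plan is to show that under the time change $\gamma$ the FBSDE \eqref{fbsde:base} collapses exactly onto the target SDE run on the clock $t$, and that at the terminal clock value $\tilde{\tau}$ the process $A$ coincides with $g(W_1)$, whose law is $\nu$ by construction. First I would record the elementary structure of the clock. Since the integrand $Z_r^2/\sigma^2(\X{2}{r}, Y_r + \X{3}{r})$ is non-negative, $\X{2}{}$ is continuous and non-decreasing; hence $\gamma$ is its right-continuous generalised inverse and $\X{2}{\gamma(t)} = t$ for $t \in [0,\tilde{\tau})$. On any stretch where $\X{2}{}$ is flat we have $Z \equiv 0$, so that $Y$ (which satisfies $\d Y_s = Z_s \d W_s$) and $\X{3}{}$ are also constant there; this is what makes $A_t = Y_{\gamma(t)} + \X{3}{\gamma(t)}$ continuous even at times where $\gamma$ jumps. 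The stopping-time property $\{\tilde{\tau} \le t\} \in \cG_t$ follows from $\{t \ge \tilde{\tau}\} = \{\gamma(t) = 1\}$ together with the $\cG_t = \cF_{\gamma(t)}$-measurability of $\gamma(t)$, and the bound on $\tilde{\tau}$ is immediate from $\sigma \ge \epsilon$ and the estimate \eqref{est:Z}:
\begin{equation*}
\tilde{\tau} = \X{2}{1} = \int_0^1 \frac{Z_r^2}{\sigma^2(\X{2}{r}, Y_r + \X{3}{r})}\,\d r \le \epsilon^{-2}\Vert Z\Vert_\infty^2,
\end{equation*}
so that squaring \eqref{est:Z} yields exactly the claimed bound.

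The analytic core consists of two change-of-variable identities. Writing $\sigma_r := \sigma(\X{2}{r}, Y_r + \X{3}{r})$ and $\mu_r := \mu(\X{2}{r}, Y_r + \X{3}{r})$, the relation $\d \X{2}{r} = (Z_r^2/\sigma_r^2)\,\d r$ rearranges to $Z_r^2\,\d r = \sigma_r^2\,\d \X{2}{r}$. Applying the change-of-variable formula for Stieltjes integrals against the increasing clock $\X{2}{}$ (equivalently the time-change formula of Chapter V in \cite{RY13}) and using $\X{2}{\gamma(u)} = u$, I would obtain
\begin{equation*}
\langle Y_{\gamma(\cdot)}\rangle_t = \langle Y\rangle_{\gamma(t)} = \int_0^{\gamma(t)} Z_r^2\,\d r = \int_0^{\gamma(t)} \sigma_r^2\,\d \X{2}{r} = \int_0^t \sigma^2(u, A_u)\,\d u,
\end{equation*}
and, in the same way,
\begin{equation*}
\X{3}{\gamma(t)} = \int_0^{\gamma(t)} \mu_r\,\d \X{2}{r} = \int_0^t \mu(u, A_u)\,\d u .
\end{equation*}

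With these in hand, $B$ is identified as a Brownian motion by L\'evy's characterisation: time-changing the $(\cF_t)$-martingale $Y$ yields a continuous $(\cG_t)$-local martingale $Y_{\gamma(\cdot)}$, so its stochastic integral $B$ is a continuous $(\cG_t)$-local martingale, while the first identity gives $\langle B\rangle_t = \int_0^t \sigma^{-2}(u,A_u)\,\d\langle Y_{\gamma(\cdot)}\rangle_u = t$. Inverting the definition of $B$ as $\d Y_{\gamma(t)} = \sigma(t,A_t)\,\d B_t$ and combining with $\d \X{3}{\gamma(t)} = \mu(t,A_t)\,\d t$ and $A_0 = Y_0 + \X{3}{0} = Y_0$ produces the stated SDE for $A$. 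Finally, since $\gamma(\tilde{\tau}) = 1$ we get $A_{\tilde{\tau}} = Y_1 + \X{3}{1}$, and evaluating the backward equation of \eqref{fbsde:base} at $s=1$ gives $Y_1 = g(\X{1}{1}) - \X{3}{1}$, whence $A_{\tilde{\tau}} = g(\X{1}{1}) = g(W_1)$. As $W_1$ is standard normal, $\Phi(W_1)$ is uniform on $[0,1]$ and $g = F_\nu^{-1}\circ\Phi$ yields $A_{\tilde{\tau}} \sim \nu$ by inverse-transform sampling.

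The main obstacle is not the algebra but the rigorous handling of the time change: verifying the $(\cG_t)$-stopping-time property, ensuring continuity of $A$ across the flat stretches of $\X{2}{}$ (where $\gamma$ jumps but $Y$ and $\X{3}{}$ do not), and justifying that the time-changed martingale $Y_{\gamma(\cdot)}$ is a genuine $(\cG_t)$-local martingale so that both the change-of-variable identities and L\'evy's theorem may be legitimately invoked. Once this framework is in place, the two displayed identities reduce everything to inverse-transform sampling.
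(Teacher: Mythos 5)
Your proposal is correct and follows essentially the same route as the paper's proof: the generalized inverse clock $\gamma$ with $\X{2}{\gamma(t)}=t$, L\'evy's characterisation via $\langle B\rangle_t = t$, the change-of-variables identities turning the $\d r$-integrals into $\d t$-integrals along the new clock, and the terminal identification $A_{\tilde\tau}=Y_1+\X{3}{1}=g(W_1)\sim\nu$, with the bound on $\tilde\tau$ obtained from $\sigma\ge\epsilon$ and the estimate \eqref{est:Z}. If anything, your treatment of the flat stretches of $\X{2}{}$ (where $Z\equiv 0$ forces $Y$ and $\X{3}{}$ to be constant, guaranteeing continuity of $A$ and the local-martingale property of $Y_{\gamma(\cdot)}$) is more explicit than the paper's, which handles these points by citing the time-change results of Chapter V of \cite{RY13}.
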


\begin{proof}
By standard results it follows that $\tilde{\tau}$ is a $(\cG_t)$-stopping time (see e.g.\ Proposition 1.1, Chapter V, \cite{RY13}).
With \begin{equation}\label{defi gamma inv}
\gamma^{-1} (s) := \X{2}{s}%\int_0^s \frac{Z_r^2}{\sigma^2( \X{2}{r}, Y_r + \X{3}{r} )} \d r
\end{equation} 
for all $s \in [0,1]$ we have for all $t \in [0, \tilde{\tau}]$ that $\X{2}{\gamma(t)} = \gamma^{-1}(\gamma(t))=t $. Therefore, and because $\d Y_r = Z_r \d W_r$, we obtain
\begin{equation*}
\qvar{B}{t}= \int_0^{\gamma (t)} \frac{Z_r^2}{\sigma^2( \X{2}{r}, Y_r + \X{3}{r}  )} \d r = \gamma^{-1}(\gamma(t)) = t.
\end{equation*}
By Levy's characterisation of Brownian motion we get that $(B_t)$ is a $(\cG_t)$-Brownian motion on $[0, \tilde{\tau}]$.

Note that for all $\omega \in \Omega$ the function $\gamma$ is $\lambda$-a.e.\ differentiable on $[0, \tilde{\tau}]$ with
\begin{equation}
\label{eq:derivativeGamma}
\gamma' (t)
= ((\gamma^{-1})^{-1})'(t)
= \frac{1}{(\gamma^{-1})'( \gamma (t) )}
= \frac{\sigma^2 ( \X{2}{\gamma(t)}, Y_{\gamma(t)} + \X{3}{\gamma(t)} ) }{Z^2_{\gamma (t)}}
\end{equation}
and hence
\begin{align*}
A_{t}
&= \X{3}{\gamma(t)} + Y_{\gamma(t)} - Y_0 + Y_0 \\
&= Y_0 + \int_0 ^{\gamma (t) } \mu \left( \X{2}{r}, Y_r + \X{3}{r} \right) \frac{Z_r^2}{\sigma^2 (\X{2}{r}, Y_r + \X{3}{r} )} \d r + \int_0^{\gamma(t)} \frac{\sigma (\X{2}{r}, Y_r + \X{3}{r} )}{\sigma (\X{2}{r}, Y_r + \X{3}{r} )} \d Y_r \\
&= Y_0 + \int_0 ^t \mu \left( \X{2}{\gamma(r)}, Y_{\gamma (r) } + \X{3}{\gamma (r) } \right) \d r + \int_0^{t} \sigma \left( \X{2}{\gamma(r)}, Y_{\gamma(r)} + \X{3}{\gamma(r)} \right) \d B_r  \\
&= Y_0 + \int_0 ^t \mu \left( r, A_r \right) \d r + \int_0^t \sigma(r, A_r) \d B_r
\end{align*}
for all $t \in [0, \tilde{\tau}]$.
%Maybe cite Revus,Yor-continuousMartingalesAndBrownianMotion Chapter 5 Proposition 1.4, which allows to pull the time change from the integral to the integrants
Also
\begin{equation*}
A_{\tilde{\tau}} = Y_{\gamma (\tilde{\tau})} + \X{3}{\gamma(\tilde{\tau})} = Y_1 + \X{3}{1} = g(W_1) \sim \nu.
\end{equation*}
The bound for $\tilde{\tau}$ follows with the bound for $\Vert Z \Vert_\infty$ stated in Theorem \ref{thm:U1isBounded} and by $\sigma \geq \epsilon$.
\end{proof}
The next lemma characterizes the stopping time $\tilde \tau = \gamma^{-1}(1)$ of Theorem \ref{thm:weakSolution} in terms of the solution of an FBSDE driven by the Brownian motion $B$. We use the lemma later to show existence of strong solutions of the \SEP.

\begin{lemma}
\label{lemma:localLipschitzSystem}
Assume $g$, $\mu$ and $\sigma$ to fulfill Assumption \ref{asump:conditionsU1bounded}. Let the decoupling field $u$ of the FBSDE \eqref{fbsde:base} have a continuous weak derivative $\diff{x_1}{u} > 0$. Also let $(\X{1}{}, \X{2}{}, \X{3}{}, Y, Z)$, $\gamma$ and $B$ be defined as in Theorem \ref{thm:weakSolution}. Moreover, let $\hat{B}$ be any Brownian motion coinciding with $B$ on $[0, \X{2}{1}]$. Then $\gamma$, $W$, $\X{3}{}$ and $Y$ solve the system
\begin{align}
\gamma (t) &= \int_0^t \frac{\sigma^2 \left(r , Y_{\gamma(r)} + \X{3}{\gamma(r)} \right)}{\left(\diff{x_1}{u}\right)^2 (\gamma(r), W_{\gamma(r)}, r, \X{3}{\gamma(r)})} \d r \nonumber \\
W_{\gamma(t)} &= \int_0^t \frac{\sigma \left(r , Y_{\gamma(r)} + \X{3}{\gamma(r)} \right)}{\diff{x_1}{u}(\gamma(r), W_{\gamma(r)}, r, \X{3}{\gamma(r)})} \d \hat{B}_r \label{sys:localLipschitzProblem} \\
\X{3}{\gamma(t)} &= \int_0^t \mu \left(r , Y_{\gamma(r)} + \X{3}{\gamma(r)} \right) \d r \nonumber \\
Y_{\gamma(t)} &= Y_0 + \int_0^t \sigma\left(r , Y_{\gamma(r)} + \X{3}{\gamma(r)} \right) \d \hat{B}_r \nonumber
\end{align}
for all $t \geq 0$ such that $\gamma(t) \leq 1$. Additionally, for $\gamma^{-1}$ defined as in \eqref{defi gamma inv} we have %$(Y_{\gamma})(\gamma^{-1}(1)) + (\X{3}{\gamma})(\gamma^{-1}(1)) \sim \nu$ and 
\begin{equation}\label{bound gamma inv}
\gamma^{-1}(1) \leq \frac{\Vert \diff{x_1}{u} \Vert_\infty^2}{\epsilon^2} \leq \epsilon^{-2} \left( \frac{1}{\Vert g' \Vert_\infty ^2} + 2 \min \left\{0, \inf_{ (\theta,x) \in \IR_+ \times \IR } \left( \frac{ \sigma \cdot \diff{a}{\mu} - 2 \diff{a}{\sigma} \cdot \mu }{\sigma^3} \right) (\theta,x) \right\} \right)^{-1}.
\end{equation}
\end{lemma}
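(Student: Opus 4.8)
The plan is to verify the four equations of \eqref{sys:localLipschitzProblem} one at a time, by rewriting the already-known defining relations of $(\X{1}{}, \X{2}{}, \X{3}{}, Y, Z)$ under the time change $\gamma$, and then to read off \eqref{bound gamma inv} from $\gamma^{-1}(1) = \X{2}{1}$ together with the estimate on $\Vert Z \Vert_\infty$ from Theorem \ref{thm:U1isBounded}. Throughout I work on the range $t \in [0, \X{2}{1}]$, which is exactly where $\gamma$ is continuous and strictly increasing with $\X{2}{\gamma(t)} = t$ (by \eqref{defi gamma inv}) and where the constraint $\gamma(t) \le 1$ is active; since $\hat B$ coincides with $B$ on $[0, \X{2}{1}]$, every $\d \hat B$-integral over such a $[0,t]$ agrees with the corresponding $\d B$-integral.

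Three preliminary facts drive the computation. First, the initial value $(0,0,0)$ forces $\X{1}{s} = W_s$. Second, as $f \equiv 0$ in the backward equation, $\d Y_s = Z_s \d W_s$. Third, since $\diff{x_1}{u}$ is assumed continuous and is bounded (being a space-derivative of the decoupling field from Theorem \ref{thm:U1isBounded}), Lemma \ref{lemma:ZequalsU1} applies in its pointwise form $Z_s = \diff{x_1}{u}(s, \X{1}{s}, \X{2}{s}, \X{3}{s})$, and the hypothesis $\diff{x_1}{u} > 0$ makes $Z$ strictly positive so that $\d W_s = Z_s^{-1}\,\d Y_s$ is well defined. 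I will also use the differential $\d\big(\X{2}{u}\big) = \frac{Z_u^2}{\sigma^2(\X{2}{u}, Y_u + \X{3}{u})}\,\d u$, which under the substitution $u = \gamma(v)$ becomes $\d v$ because $\X{2}{\gamma(v)} = v$, together with the time-change rule for stochastic integrals $\int_0^{\gamma(t)} H_u \,\d N_u = \int_0^t H_{\gamma(r)}\,\d N_{\gamma(r)}$ (Proposition 1.4, Chapter V of \cite{RY13}).

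Each line of \eqref{sys:localLipschitzProblem} then follows by direct substitution. In $\X{3}{\gamma(t)} = \int_0^{\gamma(t)} \mu(\X{2}{u}, Y_u + \X{3}{u}) \frac{Z_u^2}{\sigma^2(\X{2}{u}, Y_u + \X{3}{u})}\,\d u$ I substitute $u = \gamma(v)$, absorb the kernel into $\d v$, and use $\X{2}{\gamma(v)} = v$ to reach the $\mu(r, \cdot)\,\d r$-integral. For $Y_{\gamma(t)} = Y_0 + \int_0^{\gamma(t)} Z_u\,\d W_u$ I apply the time-change rule, insert $\d Y_{\gamma(r)} = \sigma(\X{2}{\gamma(r)}, Y_{\gamma(r)} + \X{3}{\gamma(r)})\,\d B_r$ (the defining relation of $B$ from Theorem \ref{thm:weakSolution}), replace $B$ by $\hat B$ and put $\X{2}{\gamma(r)} = r$. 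For $W_{\gamma(t)} = \int_0^{\gamma(t)} Z_u^{-1}\,\d Y_u$ I push through the time change and substitute $Z_{\gamma(r)} = \diff{x_1}{u}(\gamma(r), W_{\gamma(r)}, r, \X{3}{\gamma(r)})$ along with the same expression for $\d Y_{\gamma(r)}$. For the $\gamma$-equation I integrate the a.e.\ derivative \eqref{eq:derivativeGamma}, namely $\gamma'(r) = \sigma^2(\X{2}{\gamma(r)}, Y_{\gamma(r)} + \X{3}{\gamma(r)})/Z_{\gamma(r)}^2$, again using $\X{2}{\gamma(r)} = r$ and $Z_{\gamma(r)} = \diff{x_1}{u}(\gamma(r), W_{\gamma(r)}, r, \X{3}{\gamma(r)})$. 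Finally, for \eqref{bound gamma inv} I estimate $\gamma^{-1}(1) = \X{2}{1} = \int_0^1 \frac{Z_r^2}{\sigma^2(\X{2}{r}, Y_r + \X{3}{r})}\,\d r \le \epsilon^{-2}\Vert Z\Vert_\infty^2$ and combine $\Vert Z\Vert_\infty \le \Vert \diff{x_1}{u}\Vert_\infty$ with the explicit bound \eqref{est:Z} from Theorem \ref{thm:U1isBounded}.

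I expect the substantive points to be organisational rather than analytic: ensuring the ordinary change of variables (for the finite-variation $\X{3}{}$- and $\gamma$-integrals) and the It\^o time-change formula (for the $Y$- and $W$-integrals) are both invoked on the correct domain $[0, \X{2}{1}]$ where $\gamma$ is a bona fide time change, and justifying the passage from $B$ to its extension $\hat B$, which is legitimate precisely because they coincide on $[0, \X{2}{1}]$ and every integral is stopped at $t \le \X{2}{1}$. The continuity and strict positivity of $\diff{x_1}{u}$ are used in an essential way: continuity to obtain the pointwise identity $Z = \diff{x_1}{u}(\cdots)$ from Lemma \ref{lemma:ZequalsU1}, and positivity to divide by $Z$ in the $W$-equation and by $(\diff{x_1}{u})^2$ in the $\gamma$-equation.
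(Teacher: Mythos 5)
Your proposal is correct and follows essentially the same route as the paper's proof: both use the continuity hypothesis to invoke Lemma \ref{lemma:ZequalsU1} in its pointwise form $Z_s = \diff{x_1}{u}(s,\X{1}{s},\X{2}{s},\X{3}{s}) > 0$, differentiate $\gamma$ via \eqref{eq:derivativeGamma}, verify each of the four equations by direct substitution under the time change (using the rule from Proposition 1.4, Chapter V of \cite{RY13}), and obtain \eqref{bound gamma inv} from $\gamma^{-1}(1)=\X{2}{1}$, $\sigma \ge \epsilon$, and the bound \eqref{est:Z} of Theorem \ref{thm:U1isBounded}. Your additional care about the domain $[0,\X{2}{1}]$ and the passage from $B$ to $\hat{B}$ only makes explicit what the paper leaves implicit.
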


\begin{proof}
Note that Theorem \ref{thm:U1isBounded} implies the bound \eqref{bound gamma inv}. Since $\diff{x_1}{u}$ is continuous we get with Lemma \ref{lemma:ZequalsU1} that $Z_s=\diff{x_1}{u}(s, \X{1}{s}, \X{2}{s}, \X{3}{s})>0$ for all $s \in [0,1]$ and hence both $\gamma$ and $\gamma^{-1}$ are strict monotone increasing and continuous. Moreover, Lemma \ref{lemma:ZequalsU1}, Equation \eqref{eq:derivativeGamma} and the fact that $\X{2}{\gamma(t)} = t$ yield
\begin{equation*}
\gamma' (t)
%= ((\gamma^{-1})^{-1})'(t)
%= \frac{1}{(\gamma^{-1})'( \gamma (t) )}
= \frac{\sigma^2 \left(\X{2}{\gamma(t)}, Y_{\gamma(t)} + \X{3}{\gamma(t)} \right)}{Z^2_{\gamma (t)}}
= \frac{\sigma^2 \left(t, Y_{\gamma(t)} + \X{3}{\gamma(t)} \right)}{\left(\diff{x_1}{u}\right)^2 \left( \gamma (t), \X{1}{\gamma (t)}, \X{2}{\gamma (t)}, \X{3}{\gamma (t)} \right)}
\end{equation*}
for all $0 \leq t \leq \gamma^{-1} (1)$.
%\begin{equation*}
%\X{2}{\gamma(t)} = \int_0^{\gamma(t)} \frac{Z_r^2}{\sigma^2 \left( r, Y_{\gamma(r)} + \X{3}{\gamma(r)} \right)} \d r = \int_0^t 1 \d r = t.
%\end{equation*}

Furthermore, $\X{1}{s} = W_s$ yields that
\begin{align*}
W_{\gamma(t)} 
= \int_0^{t} 1 \d W_{\gamma(r)} 
&= \int_0^{t} \frac{\sigma\left( \X{2}{\gamma(r)}, Y_{\gamma(r)} + \X{3}{\gamma(r)} \right)}{ Z_{\gamma(r)}} \d \hat{B}_r \\
&= \int_0^t \frac{\sigma\left( r, Y_{\gamma(r)} + \X{3}{\gamma(r)} \right)}{\diff{x_1}{u} \left( \gamma (r), \X{1}{\gamma (r)}, \X{2}{\gamma (r)}, \X{3}{\gamma (r)} \right)} \d \hat{B}_r \\
&= \int_0^t \frac{\sigma\left( r, Y_{\gamma(r)} + \X{3}{\gamma(r)} \right)}{\diff{x_1}{u} \left( \gamma (r), W_{\gamma(r)}, r, \X{3}{\gamma (r)} \right)} \d \hat{B}_r.
\end{align*}
Also
\begin{align*}
Y_{\gamma(t)}
= Y_0 + \int_0^{\gamma(t)} Z_r \d W_r 
&= Y_0 + \int_0^t Z_{\gamma(r)} \d W_{\gamma(r)} \\
&= Y_0 + \int_0^t \sigma \left( r, Y_{\gamma(r)} + \X{3}{\gamma(r)} \right) \d \hat{B}_r,
\end{align*}
\begin{align*}
\X{3}{\gamma(t)}
&=\int_0^{\gamma(t)} \mu \left( \X{2}{r}, Y_r + \X{3}{r} \right) \frac{Z_r^2}{\sigma^2(\X{2}{r}, Y_r + \X{3}{r})} \d r \\
&= \int_0^t \mu \left( \X{2}{\gamma(r)}, Y_{\gamma(r)} + \X{3}{\gamma(r)} \right) \frac{Z_{\gamma(r)}^2}{\sigma^2\left( \X{2}{\gamma(r)}, Y_{\gamma(r)} + \X{3}{\gamma(r)} \right)} \frac{\sigma^2\left( \X{2}{\gamma(r)}, Y_{\gamma(r)} + \X{3}{\gamma(r)} \right)}{Z_{\gamma(r)}^2} \d r \\
&= \int_0^t \mu \left( r, Y_{\gamma(r)} + \X{3}{\gamma(r)} \right) \d r
\end{align*}
and
\begin{align*}
\gamma(t)
= \int_0^t \gamma'(r) \d r 
&= \int_0^t \frac{\sigma^2\left( r, Y_{\gamma(r)} + \X{3}{\gamma(r)} \right)}{\left(\diff{x_1}{u}\right)^2 \left( \gamma (r), \X{1}{\gamma (r)}, \X{2}{\gamma (r)}, \X{3}{\gamma (r)} \right)} \d r \\
&= \int_0^t \frac{\sigma^2\left( r, Y_{\gamma(r)} + \X{3}{\gamma(r)} \right)}{\left(\diff{x_1}{u}\right)^2 \left( \gamma (r), W_{\gamma (r)}, r, \X{3}{\gamma (r)} \right)} \d r
\end{align*}
for all $t \in [0, \gamma^{-1}(1)]$.
\end{proof}

\section{Strong solution}
\label{sec:strongSolution}

We use the definitions and constructions of the former chapters. In particular let $u$ be the unique strongly regular decoupling field of the FBSDE \eqref{fbsde:base} which exists on the whole interval $[0,1]$ if Assumption \ref{asump:conditionsU1bounded} is fulfilled.

\begin{theorem}
\label{thm:stoppingB:strongSolution}
Let $g$, $\mu$ and $\sigma$ fulfill Assumption \ref{asump:conditionsU1bounded} and $\mu$, $\sigma$ and their derivatives be bounded. Denote by $u$ the decoupling field of FBSDE \eqref{fbsde:base} and assume the partial derivative $\diff{x_1}{u}$ with respect to the first space variable to be Lipschitz continuous in every argument and $\diff{x_1}{u} \geq \delta > 0$. Let $B$ be an arbitrary Brownian motion and denote by $( \mathcal{F}^B ) = ( \mathcal{F}^B_s)_{s \in [0, \infty)}$ the augmented filtration generated by $B$. Then there exists a bounded stopping time $\tau$ with respect to the filtration $\mathcal{F}^B$ such that for the process $A$ given by
\begin{equation*}
A_t = Y_0 + \int_0^t \mu \left( r, A_r \right) \d r + \int_0^t \sigma(r, A_r) \d B_r,
\end{equation*}
for all $t \in [0, \tau ]$, we have that $A_{\tau} \sim \nu$ and the stopping time $\tau$ satisfies
\begin{equation*}
\tau \leq \epsilon^{-2} \left( \frac{1}{\Vert g' \Vert_\infty ^2} + 2 \min \left\{0, \inf_{ (\theta,x) \in \IR_+ \times \IR } \left( \frac{ \sigma \cdot \diff{a}{\mu} - 2 \diff{a}{\sigma} \cdot \mu }{\sigma^3} \right) (\theta,x) \right\} \right)^{-1} \ \text{a.s.}
\end{equation*}
By solving the  Lipschitz SDE
\begin{align}
\gamma(r) &= \int_0 ^r \frac{\sigma^2 ( s, \Theta_s + \Delta_s ) }{\left( \diff{x_1}{u} ( \gamma(s), \Gamma_s, s, \Delta_s) \right)^2} \d s \nonumber\\
\Gamma_r &= \int_0^r \frac{\sigma ( s, \Theta_s + \Delta_s ) }{ \diff{x_1}{u} ( \gamma(s), \Gamma_s, s, \Delta_s) } \d B_s \label{sys:systemTau} \\
\Delta_r &= \int_0^r \mu ( s, \Theta_s + \Delta_s ) \d s \nonumber \\
\Theta_r &= Y_0 + \int_0^r \sigma( s, \Theta_s + \Delta_s ) \d B_s \nonumber
\end{align}
for all $r \geq 0$ such that $\gamma(r) \leq 1$ and where $Y_0$ is the starting value of the process $Y$ in the FBSDE \eqref{fbsde:base} and setting $\tau := \inf \{ r \geq 0 \vert \gamma(r) = 1 \}$ we can obtain such a stopping time. 
\end{theorem}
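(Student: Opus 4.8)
The plan is to read \eqref{sys:systemTau} as a standard Lipschitz SDE for the four-dimensional state $(\gamma,\Gamma,\Delta,\Theta)$, to define $\tau$ as the first time the time-change component $\gamma$ reaches the level $1$, and finally to transport the distributional identity $A_{\tilde\tau}\sim\nu$ of the weak solution (Theorem \ref{thm:weakSolution}) to the present strong setting through pathwise uniqueness. First I would check that \eqref{sys:systemTau} genuinely has Lipschitz coefficients. Here the integration variable $s$ enters the $x_2$-slot of $\diff{x_1}{u}$ only as an explicit time, so time-dependent Lipschitz theory applies to the state $(\gamma,\Gamma,\Delta,\Theta)$. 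Since $\mu$, $\sigma$ and their derivatives are bounded, both $\mu$ and $\sigma$ are bounded and globally Lipschitz; by hypothesis $\diff{x_1}{u}$ is Lipschitz in all arguments and bounded below by $\delta>0$, while Theorem \ref{thm:U1isBounded} furnishes a finite upper bound for $\diff{x_1}{u}$. Hence $1/\diff{x_1}{u}$ is bounded and Lipschitz, and the quotients $\sigma/\diff{x_1}{u}$ and $\sigma^2/(\diff{x_1}{u})^2$ occurring in \eqref{sys:systemTau} are again bounded and Lipschitz, being products of bounded Lipschitz functions. After extending $\diff{x_1}{u}$ in its first (time) argument to a globally Lipschitz field beyond $[0,1]$, which does not affect the dynamics before $\gamma$ attains the value $1$, the system \eqref{sys:systemTau} possesses a unique strong solution $(\gamma,\Gamma,\Delta,\Theta)$ adapted to $(\mathcal{F}^B)$. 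As the drift of the first equation is the strictly positive quantity $\sigma^2/(\diff{x_1}{u})^2$, the path $r\mapsto\gamma(r)$ is continuous and strictly increasing, so $\tau:=\inf\{r\ge 0\mid\gamma(r)=1\}$ is well defined and, being the hitting time of a continuous $(\mathcal{F}^B)$-adapted process, an $(\mathcal{F}^B)$-stopping time.

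For the bound on $\tau$ I would exploit $\sigma\ge\epsilon$ together with the upper bound on $\diff{x_1}{u}$ to control the growth of $\gamma$ via
\begin{equation*}
\gamma'(r)=\frac{\sigma^2(r,\Theta_r+\Delta_r)}{\left(\diff{x_1}{u}(\gamma(r),\Gamma_r,r,\Delta_r)\right)^2}\ge\frac{\epsilon^2}{\Vert\diff{x_1}{u}\Vert_\infty^2}.
\end{equation*}
Integrating gives $\gamma(r)\ge(\epsilon^2/\Vert\diff{x_1}{u}\Vert_\infty^2)\,r$, hence $\tau\le\Vert\diff{x_1}{u}\Vert_\infty^2/\epsilon^2$; inserting the estimate for $\Vert\diff{x_1}{u}\Vert_\infty$ from Theorem \ref{thm:U1isBounded} (equivalently \eqref{bound gamma inv}) produces exactly the asserted inequality for $\tau$.

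Next I would set $A_t:=\Theta_t+\Delta_t$. Adding the $\Delta$- and $\Theta$-equations of \eqref{sys:systemTau} gives
\begin{equation*}
A_t=Y_0+\int_0^t\mu(s,\Theta_s+\Delta_s)\d s+\int_0^t\sigma(s,\Theta_s+\Delta_s)\d B_s=Y_0+\int_0^t\mu(s,A_s)\d s+\int_0^t\sigma(s,A_s)\d B_s,
\end{equation*}
so $A$ solves the required SDE on $[0,\tau]$. To obtain $A_\tau\sim\nu$ I would invoke the weak solution: by Lemma \ref{lemma:localLipschitzSystem} the processes $(\gamma,W_{\gamma(\cdot)},\X{3}{\gamma(\cdot)},Y_{\gamma(\cdot)})$ constructed in Theorem \ref{thm:weakSolution} solve the system \eqref{sys:localLipschitzProblem}, which is identical to \eqref{sys:systemTau}, driven there by the Brownian motion $B$ of the weak construction. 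Pathwise uniqueness of the Lipschitz system then forces the strong solution to be a fixed measurable functional of the driving path, so that the stopped quantity $A_\tau=\Theta_\tau+\Delta_\tau$, with $\tau=\inf\{r\mid\gamma(r)=1\}$, is a measurable functional $F(B)$ whose law depends only on Wiener measure. On the weak-solution space this functional equals, at $\gamma(\tau)=1$, the value $Y_1+\X{3}{1}=g(W_1)\sim\nu$; consequently $F(B)\sim\nu$ for the arbitrary Brownian motion $B$ of the theorem as well, which is the claim $A_\tau\sim\nu$.

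I expect the decisive step to be the verification that the composite coefficients of \eqref{sys:systemTau} are bounded and globally Lipschitz. This is precisely where the additional hypotheses of the theorem, namely the Lipschitz continuity of $\diff{x_1}{u}$, the strict positivity $\diff{x_1}{u}\ge\delta>0$, and the boundedness of $\mu$, $\sigma$ and their derivatives, become indispensable: without them the quotients $\sigma/\diff{x_1}{u}$ and $\sigma^2/(\diff{x_1}{u})^2$ need be neither bounded nor Lipschitz, and both the strong solvability of \eqref{sys:systemTau} and the transfer of the law of $A_\tau$ via pathwise uniqueness would break down.
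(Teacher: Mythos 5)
Your proposal is correct and follows essentially the same route as the paper: verify that the quotients $\sigma/\diff{x_1}{u}$ and $\sigma^2/(\diff{x_1}{u})^2$ are bounded and Lipschitz (using $\diff{x_1}{u}\ge\delta$ and its Lipschitz continuity), obtain a unique strong $(\cF^B_t)$-adapted solution of \eqref{sys:systemTau} so that $\tau$ is a stopping time, and then transfer the law from the weak construction of Lemma \ref{lemma:localLipschitzSystem} by noting that \eqref{sys:localLipschitzProblem} and \eqref{sys:systemTau} differ only in the driving Brownian motion — your "pathwise uniqueness makes the solution a measurable functional of the driver" is exactly the paper's appeal to the principle of causality. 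The only cosmetic differences are that you derive the bound on $\tau$ directly from $\gamma'\ge\epsilon^2/\Vert\diff{x_1}{u}\Vert_\infty^2$ rather than importing it from Lemma \ref{lemma:localLipschitzSystem}, and you add the (harmless, indeed careful) remark about extending $\diff{x_1}{u}$ in time beyond $[0,1]$.
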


\begin{proof}
Since any solution of FBSDE \eqref{fbsde:base} has a unique distribution independent of the driving Brownian motion, we know that the constant $Y_0$ is always the same and does not depend on the driving Brownian motion.

Let us take a look at the system \eqref{sys:systemTau}. Note that for all $a,b \in [0,1] \times \IR^3$ 
%\begin{equation*}
%\left\vert \frac{1}{\left(\diff{x_1}{u}\right)^2 (a)} - \frac{1}{\left(\diff{x_1}{u}\right)^2(b)} \right\vert 
%= \left\vert \frac{ \left[ \diff{x_1}{u}(b) + \diff{x_1}{u} (a) \right] \cdot \left[ \diff{x_1}{u}(b) - \diff{x_1}{u} (a) \right] }{ \left(\diff{x_1}{u}\right)^2(a) \cdot \left(\diff{x_1}{u}\right)^2(b) } \right\vert
%\leq \frac{ 2 \Vert \diff{x_1}{u} \Vert_\infty }{ \epsilon ^4 } L_{u,x_1} \vert b - a \vert,
%\end{equation*}
\begin{equation*}
\left\vert \frac{1}{\diff{x_1}{u} (a)} - \frac{1}{\diff{x_1}{u}(b)} \right\vert 
= \left\vert \frac{ \diff{x_1}{u}(b) - \diff{x_1}{u} (a) }{ \diff{x_1}{u}(a) \cdot \diff{x_1}{u}(b) } \right\vert
\leq \frac{ L_{u,x_1} }{ \delta ^2 } \vert b - a \vert,
\end{equation*}
yielding that $(\diff{x_1}{u})^{-1}$ is Lipschitz continuous.
Since hence both $(\diff{x_1}{u})^{-1}$ and $\sigma$ are Lipschitz continuous and bounded we get that $\sigma \cdot (\diff{x_1}{u})^{-1}$ and $\sigma^2 \cdot (\diff{x_1}{u})^{-2}$ are Lipschitz and bounded as well. Thus, we have that all coefficients of the system \eqref{sys:systemTau} are Lipschitz continuous. 
Therefore there exists a unique solution $(\gamma, \Gamma, \Delta, \Theta)$ of \eqref{sys:systemTau} which is progressively measurable w.r.t.\ $(\cF^B_t)$. Hence $\tau := \inf \{ r \geq 0 \vert \gamma(r) = 1 \}$ is a stopping time w.r.t.\ $(\mathcal{F}^{B}_t)$ because $\gamma$ is continuous.

Furthermore, the systems \eqref{sys:localLipschitzProblem} and \eqref{sys:systemTau} just differ by notation and the driving Brownian motion. By the principle of causality (see \cite{karatzas1991brownian}) the distributions of $(\gamma, W_{\gamma}, \X{3}{\gamma}, Y_{\gamma})$ from Lemma \ref{lemma:localLipschitzSystem} and $(\gamma, \Gamma, \Delta, \Theta)$ are the same. Hence, we immediately have the bound for $\tau$ as stated in Lemma \ref{lemma:localLipschitzSystem} and also for $A_t:= \Delta_t + \Theta_t$ that
\begin{equation*}
A_\tau = \Delta_\tau + \Theta_\tau = \Delta_{\gamma^{-1}(1)} + \Theta_{\gamma^{-1}(1)} \sim \X{3}{\gamma(\gamma^{-1}(1))} + Y_{\gamma(\gamma^{-1}(1))} = \X{3}{1} + Y_1 = g(W_1) \sim \nu
\end{equation*}
and
\begin{align*}
A_t
= \Delta_t + \Theta_t 
&= Y_0 + \int_0^t \mu (s, \Delta_s + \Theta_s) \d s + \int_0^t \sigma(s, \Delta_s + \Theta_s) \d B_s \\
&= Y_0 + \int_0^t \mu (s, A_s) \d s + \int_0^t \sigma(s, A_s) \d B_s.
\end{align*}
\end{proof}

What remains to do is to find sufficient conditions for the assumptions of Theorem \ref{thm:stoppingB:strongSolution} to hold true. 
For this we use that the decoupling field $u$ of FBSDE \eqref{fbsde:base} is three times weakly differentiable. To show this we extend FBSDE \eqref{fbsde:base} by the dynamics of the gradient processes and view this system as a extended FBSDE, for which we can show the weak differentiability of its decoupling field.

Let $a := \max \big( \Vert \diff{x_1}{u} \Vert_\infty, \Vert \diff{x_2}{u} \Vert_\infty, \Vert \diff{x_3}{u} \Vert_\infty \big)$ and define the truncation operator $T:\IR \to \IR$ by $T(z):= \min(\max(z,-a), a)$. Note that the map $T$ is uniformly Lipschitz.
Assume that $g$, $\mu$, $\sigma$ and their first derivatives are Lipschitz continuous and consider the FBSDE
\begin{align}\label{align:fbsde:secondOrder}
\X{1}{s} &= \x{1} + \int_t^s 1 \d W_r, \nonumber \\
\X{2}{s} &= \x{2} + \int _t^s \frac{ ( \Zo{0}{r} )^2}{ \sigma_r^2 } \d r, \nonumber \\
\X{3}{s} &= \x{3} + \int_t^s \mu_r \frac{( \Zo{0}{r} )^2}{ \sigma_r^2 } \d r, \nonumber \\
\Y{0}{s} &= g(\X{1}{1}) - \X{3}{1} - \int_s^1 \Zo{0}{r} \d W_r, \nonumber \\
\Y{1}{s} &= g'(\X{1}{1}) + \int_s^1 T \left( \Y{1}{r} \right) \frac{( \Zo{0}{r} )^2}{\sigma_r^2 } \left( T \left( \Y{3}{r} \right) \left( \mu_{a,r} - 2 \mu_r \frac{\sigma_{a,r}}{\sigma_r} \right) - 2 T \left(  \Y{2}{r} \right) \frac{\sigma_{a,r}}{\sigma_r }\right) \d r \nonumber \\
& \hspace*{5cm} + \int_s^1 2 \frac{ \Zo{0}{r} }{ \sigma_r^2} T \left( \left(\Y{2}{r} \right) + T \left( \Y{3}{r} \right) \mu_r \right) \Zo{1}{r} \d r - \int_s^1 \Zo{1}{r} \d W_r \nonumber \\
\Y{2}{s} &= 0 + \int_s^1 -2 \frac{( \Zo{0}{r} )^2}{\sigma_r^2} \left( \frac{\sigma_{t,r}}{\sigma_r} + T \left( \Y{2}{r} \right) \frac{\sigma_{a,r}}{\sigma_r} \right) \left( T \left( \Y{2}{r} \right) + T \left( \Y{3}{r} \right) \mu_r \right) \d r \nonumber \\
& \hspace*{5cm} + \int_s^1 T \left( \Y{3}{r} \right) \frac{( \Zo{0}{r} )^2}{\sigma_r^2 } \left( T \left( \Y{2}{r} \right) \mu_{a,r} + \mu_{t,r} \right) \d r \nonumber \\
& \hspace*{5cm} + \int_s^1 2 \frac{ \Zo{0}{r} }{ \sigma_r^2} \left( T \left( \Y{2}{r} \right) + T \left( \Y{3}{r} \right) \mu_r \right) \Zo{2}{r} \d r - \int_s^1 \Zo{2}{r} \d W_r \nonumber \\
\Y{3}{r} &= -1 + \int_s^1 \left( T \left( \Y{3}{r} \right) + 1 \right) \frac{( \Zo{0}{r} )^2}{\sigma^2_r} \left( T \left( \Y{3}{r} \right) \mu_{a,s} -2 \frac{\sigma_{a,r}}{\sigma_r} \left( T \left( \Y{2}{r} \right) + T \left( \Y{3}{r} \right) \mu_r \right) \right) \d r \nonumber \\
& \hspace*{5cm} + \int_s^1 2 \frac{ \Zo{0}{r} }{ \sigma_r^2} \left( T \left( \Y{2}{r} \right) + T \left( \Y{3}{r} \right) \mu_r \right) \Zo{3}{r} \d r - \int_s^1 \Zo{3}{r} \d W_r \nonumber \\
\end{align}
with the decoupling condition 
\begin{align*}
\Y{0}{s} &= \u{0}{}(s, \X{1}{s}, \X{2}{s}, \X{3}{s} ), \\
\Y{1}{s} &= \u{1}{}(s, \X{1}{s}, \X{2}{s}, \X{3}{s} ),  \\
\Y{2}{s} &= \u{2}{}(s, \X{1}{s}, \X{2}{s}, \X{3}{s} ), \\
\Y{3}{s} &= \u{3}{}(s, \X{1}{s}, \X{2}{s}, \X{3}{s} ), 
\end{align*}
where
\begin{equation*}
\mu_r := \mu \left( \X{2}{r}, \Y{0}{r} + \X{3}{r} \right), \qquad  \qquad \sigma_r := \sigma \left( \X{2}{r}, \Y{0}{r} + \X{3}{r} \right) 
\end{equation*}
and 
\begin{align*}
\mu_{t,r} := \diff{t}{\mu} \left( \X{2}{r}, \Y{0}{r} + \X{3}{r} \right), \qquad & \qquad \mu_{a,r} := \diff{a}{\mu} \left( \X{2}{r}, \Y{0}{r} + \X{3}{r} \right), \\
\sigma_{t,r} := \diff{t}{\sigma} \left( \X{2}{r}, \Y{0}{r} + \X{3}{r} \right), \qquad & \qquad \sigma_{a,r} := \diff{a}{\sigma} \left( \X{2}{r}, \Y{0}{r} + \X{3}{r} \right).
\end{align*}

\begin{lemma}
\label{lemma:fbsdeSecondOrderBounded}
Let $g$, $\mu$ and $\sigma$ fulfill Assumption \ref{asump:conditionsU1bounded}. In addition, suppose that $g$, $\mu$ and $\sigma$ are twice differentiable and that the second derivatives are bounded. Then, for the FBSDE \eqref{align:fbsde:secondOrder}, we have $I^M_{\mathrm{max}} = [0,1]$ and there exists a unique, strongly regular Markovian decoupling field $(\u{0}{}, \u{1}{}, \u{2}{}, \u{3}{})$ on the whole interval $[0,1]$.
Furthermore,
\begin{equation*}
\u{0}{} = u, \quad \u{1}{} = \diff{x_1}{u}, \quad \u{2}{} = \diff{x_2}{u} \quad \text{and} \quad \u{3}{} = \diff{x_3}{u},
\end{equation*} 
a.e., where $u$ is the unique decoupling field to FBSDE \eqref{fbsde:base}. In particular, $u$ is twice weakly differentiable w.r.t.\ the initial value $x$ with uniformly bounded derivatives.
\end{lemma}

\begin{proof}
It is straightforward to verify that FBSDE \eqref{align:fbsde:secondOrder} satisfies (MLLC), and hence Theorem \ref{GLObalexistM} is applicable. Let $\u{i}{}$, $i = 0, 1, 2, 3$ be the corresponding unique weakly regular Markovian decoupling field on $I_{\mathrm{max}}^M$. $\u{i}{}$, $i = 0, 1, 2, 3$, are continuous functions on $I_{\mathrm{max}}^M \times \IR^3$.
In order to show that $I_{\mathrm{max}}^M = [0,1]$ we again need to prove that every partial derivative of $\u{i}{}$ for $i=0,1,2,3$ is bounded independently with regard to the interval $[t,1] \subset I_{\mathrm{max}}^M$ where we consider it.

Let $t \in I_{\mathrm{max}}^M$. For an arbitrary initial condition $\bar x \in \IR^3$ consider the corresponding processes
\begin{equation*}
\X{1}{}, \X{2}{}, \X{3}{}, \Y{0}{}, \Y{1}{}, \Y{2}{}, \Y{3}{}, \Zo{0}{}, \Zo{1}{}, \Zo{2}{}, \Zo{3}{}
\end{equation*}
on $[t,1]$. Note that $\X{1}{}, \X{2}{}, \X{3}{}, \Y{0}{}, \Zo{0}{}$ solve FBSDE \eqref{fbsde:base}, which implies that they coincide with the processes $\X{1}{}, \X{2}{}, \X{3}{}, Y, Z$ from \eqref{fbsde:base} since strong regularity of Markovian decoupling fields guarantees uniqueness. % maybe reference to Lemma 4.2.25 in Alexanders Dis
Now $\Y{0}{} = Y$ implies $u(t',x') = \u{0}{}(t',x')$ for all $t' \in [t,1]$, $x' \in \IR^3$.

Note that a truncation with $T$ does not effect any gradient process of FBSDE \eqref{fbsde:base}. Thus, $(\Y{1}{s})$, $(\Y{2}{s})$, $(\Y{3}{s})$ fulfill the same dynamics resp.\ BSDEs as the gradient processes $(\u{1}{s})$, $(\u{2}{s})$, $(\u{3}{s})$ in \eqref{bsdes:ux}. Therefore, we can apply the same arguments and conclude that they also satisfy the estimates \eqref{est:u1}, \eqref{est:u2} and \eqref{est:Z} (see Theorem \ref{thm:U1isBounded}).
In particular $\Y{3}{s} = -1 = \u{3}{s}$ for all $s \in [t,1]$ and therefore also $\Zo{3}{s} = 0 = \Z{3}{s}$. Hence,
\begin{align*}
\Y{2}{s} - \u{2}{s} 
= \ & \int_s^1 \left( \left( \Y{2}{r} \right)^2 - \left( \u{2}{r} \right)^2 \right) \frac{(\Zo{0}{r})^2}{\sigma_r^2} \left( - 2 \frac{\sigma_{a,r}}{\sigma_r} \right) \d r \\
& \quad + \int_s^1 \left( \Y{2}{r} - \u{2}{r} \right) \frac{(\Zo{0}{r})^2}{\sigma_r^2} \left( - \mu_{a,r} + 2 \frac{\sigma_{a,r}}{\sigma_r} \mu_r - 2 \frac{\sigma_{t,r}}{\sigma_r} \right) \d r \\
& \quad + \int_s^1 \frac{2 \Zo{0}{r}}{\sigma_r^2} \left( \left( \Y{2}{r} - \mu_r \right) \Zo{2}{r} - \left( \u{2}{r} - \mu_r \right) \Z{2}{r} \right) \d r - \int_s^1 \left( \Zo{2}{r} - \Z{2}{r} \right) \d W_r \\
= \ & \int_s^1 \left( \left( \Y{2}{r} \right)^2 - \left( \u{2}{r} \right)^2 \right) \frac{(\Zo{0}{r})^2}{\sigma_r^2} \left( - 2 \frac{\sigma_{a,r}}{\sigma_r} \right) \d r - \int_s^1 \left( \Zo{2}{r} - \Z{2}{r} \right) \d W_r  \\
& \quad + \int_s^1 \left( \Y{2}{r} - \u{2}{r} \right) \frac{(\Zo{0}{r})^2}{\sigma_r^2} \left( - \mu_{a,r} + 2 \frac{\sigma_{a,r}}{\sigma_r} \mu_r - 2 \frac{\sigma_{t,r}}{\sigma_r} \right) \d r \\
& \quad + \int_s^1 \frac{2 \Zo{0}{r}}{\sigma_r^2} \left( \left( \Y{2}{r} - \u{2}{r} \right) \Zo{2}{r} + \left( \u{2}{r} - \mu_r \right) \left( \Zo{2}{r} - \Z{2}{r} \right) \right) \d r 
\end{align*}
Since $\frac{2 \Zo{0}{r}}{\sigma_r^2} \left( \u{2}{r} - \mu_r \right)$ is bounded we have that $\widetilde{W}_s := W_s - W_t - \int_t^s \frac{2 \Zo{0}{r}}{\sigma_r^2} \left( \u{2}{r} - \mu_r \right) \d r$, $s \in [t,1]$, is a Brownian motion under some probability measure equivalent to $\IP$. Under the new measure the process pair $(\Y{2}{s} - \u{2}{s}, \Zo{2}{s} - \Z{2}{s})$ is a solution of the following linear BSDE with bounded coefficients
\begin{align*}
\hat Y_s
= \ & \int_s^1 \hat Y_r \left( \Y{2}{r} + \u{2}{r} \right) \frac{(\Zo{0}{r})^2}{\sigma_r^2} \left( - 2 \frac{\sigma_{a,r}}{\sigma_r} \right) \d r \\
& \quad + \int_s^1 \hat Y_r \frac{(\Zo{0}{r})^2}{\sigma_r^2} \left( - \mu_{a,r} + 2 \frac{\sigma_{a,r}}{\sigma_r} \mu_r - 2 \frac{\sigma_{t,r}}{\sigma_r} \right) \d r \\
& \quad  + \int_s^1 \hat Y_r \frac{2 \Zo{0}{r}}{\sigma_r^2} \Zo{2}{r} \d r - \int_s^1 \hat Z_r \d \widetilde{W}_r.
\end{align*}
Note that $(0,0)$ is the unique solution of the previous BSDE. Consequently, 
$\Y{2}{}$ and $\u{2}{}$ are indistinguishable and $\Zo{2}{} = \Z{2}{}$, $\lambda \otimes P$-almost everywhere on $[t,1] \times \Omega$. 

Similarly we can show that $\Y{1}{}$ and $\u{1}{}$ as well as $\Zo{1}{}$ and $\Z{1}{}$ coincide. Thus we have
\begin{align*}
& \diff{x_1}{u^{(0)}} (s, \X{1}{s}, \X{2}{s}, \X{3}{s} ) = \diff{x_1}{u} (s, \X{1}{s}, \X{2}{s}, \X{3}{s} ) = \u{1}{s} = \Y{1}{s}, \\
& \diff{x_2}{u^{(0)}} (s, \X{1}{s}, \X{2}{s}, \X{3}{s} ) = \diff{x_2}{u} (s, \X{1}{s}, \X{2}{s}, \X{3}{s} ) = \u{2}{s} = \Y{2}{s}, \\
& \diff{x_3}{u^{(0)}} (s, \X{1}{s}, \X{2}{s}, \X{3}{s} ) = \diff{x_3}{u} (s, \X{1}{s}, \X{2}{s}, \X{3}{s} ) = \u{3}{s} = \Y{3}{s}
\end{align*}
a.e.\ on $[t,1]$.

It remains to show that $I_{\mathrm{max}}^M = [0,1]$. Define for $x=(x_1, x_2, x_3)^T \in \IR^3$, $y = (y_0, y_1, y_2, y_3)^T \in \IR^4$, $z = (z_0, z_1, z_2, z_3)^T \in \IR^4$
\begin{equation*}
\bar{X}_s := \left( \begin{array}{c} \X{1}{s} \\ \X{2}{s} \\ \X{3}{s} \end{array} \right), \qquad
\bar{Y}_s := \left( \begin{array}{c} \Y{0}{s} \\ \Y{1}{s} \\ \Y{2}{s} \\ \Y{3}{s} \end{array} \right), \qquad
\bar{Z}_s := \left( \begin{array}{c} \Zo{0}{s} \\ \Zo{1}{s} \\ \Zo{2}{s} \\ \Zo{3}{s} \end{array} \right)
\end{equation*}
\begin{equation*}
\bar{M} \left( x , y, z \right) 
:= \left( \begin{array}{c} 0 \\ \frac{z_0^2}{\sigma^2 ( x_2, y_0 + x_3)} \\ \mu \left( x_2 , y_0 + x_3 \right) \frac{z_0^2}{\sigma^2(x_2 , y_0 + x_3)} \end{array} \right),
\qquad 
%\end{equation*}
%\begin{equation*}
\bar{\Sigma} := \left( \begin{array}{c} 1 \\ 0 \\ 0 \end{array} \right), \qquad
\bar{\xi} \left( x \right) := \left( \hspace{-1mm} \begin{array}{c} g \left( x_1 \right) - x_3 \\ g' \left( x_1 \right) \\ 0 \\ -1 \end{array} \hspace{-1mm} \right)
\end{equation*}
and
\begin{align*}
\bar{F} \left( x , y, z \right) \hspace*{-0.5cm} & \\
:=& \left( \begin{array}{c} 
0 \\
y_1 \frac{(z_0)^2}{\sigma^2 (x_2, y_0 + x_3)} \left( \diff{a}{\mu} \left( x_2, y_0 + x_3 \right) - 2 \mu \left( x_2, y_0 + x_3 \right) \frac{\diff{a}{\sigma} \left( x_2, y_0 + x_3 \right)}{ \sigma \left( x_2, y_0 + x_3 \right) } + 2 y_2 \frac{\diff{a}{\sigma} \left( x_2, y_0 + x_3 \right)}{ \sigma \left( x_2, y_0 + x_3 \right) } \right) \\
2 \frac{(z_0)^2}{\sigma^2 \left( x_2, y_0 + x_3 \right) } \left( \frac{ \diff{t}{\sigma} \left( x_2, y_0 + x_3 \right) }{ \sigma \left( x_2, y_0 + x_3 \right) } + y_2 \frac{ \diff{a}{\sigma} \left( x_2, y_0 + x_3 \right) }{ \sigma \left( x_2, y_0 + x_3 \right) } \right) \left( y_2 - \mu \left( x_2, y_0 + x_3 \right) \right) \\
0
\end{array} \right) \\
& + \left( \begin{array}{c} 0 \\ 0 \\ \frac{(z_0)^2}{\sigma^2 \left( x_2, y_0 + x_3 \right)} \left( y_2 \diff{a}{\mu} \left( x_2, y_0 + x_3 \right) + \diff{t}{\mu} \left( x_2, y_0 + x_3 \right) \right) \\ 0 \end{array} \right) \\
& + \left( \begin{array}{c} 0 \\  - \frac{2 z_0}{\sigma^2 \left( x_2, y_0 + x_3 \right)} \left( y_2 - \mu \left( x_2, y_0 + x_3 \right) \right) z_1 \\  - \frac{2 z_0}{\sigma^2 \left( x_2, y_0 + x_3 \right)} \left( y_2 - \mu \left( x_2, y_0 + x_3 \right) \right) z_2 \\ 0 \end{array} \right).
\end{align*}
Then
\begin{equation*}
\bar{X}_s = \bar{x} + \int_t ^s \bar{M} \left( \bar{X}_r, \bar{Y}_r, \bar{Z}_r \right) \d r + \int_t^s \bar{\Sigma} \d W_r
\end{equation*}
and
\begin{equation*}
\bar{Y}_s = \bar{\xi} \left( \bar{X}_1 \right) - \int_s^1 \bar{F} \left( \bar{X}_r, \bar{Y}_r, \bar{Z}_r \right) \d r - \int_s^1 \bar{Z}_r \d W_r.
\end{equation*}
By setting
\begin{equation*}
\bar{U}_s := \partial_x \left( \begin{array}{c} \u{0}{} \\ \u{1}{} \\ \u{2}{} \\ \u{3}{} \end{array} \right) \left( s, \bar{X}_s \right)
= \left( \begin{array}{ccc} 
\u{1}{} & \u{2}{} & \u{3}{} \\ 
\diff{x_1}{\u{1}{}} & \diff{x_2}{\u{1}{}} & \diff{x_3}{\u{1}{}} \\ 
\diff{x_1}{\u{2}{}} & \diff{x_2}{\u{2}{}} & \diff{x_3}{\u{2}{}} \\ 
\diff{x_1}{\u{3}{}} & \diff{x_2}{\u{3}{}} & \diff{x_3}{\u{3}{}} \\
\end{array} \right) \left( s, \bar{X}_s \right)
\end{equation*}
we get
\begin{equation*}
\partial_x \bar{Y}_s = \bar{U}_s \cdot \partial_x \bar{X}_s.
\end{equation*}
Since $( \partial_x \bar{X}_s)^{-1}$ is a multidimensional It\^o process on $[t,1]$ (see Lemma \ref{lemma:dynamicsFirstDiff} and its proof) we get that $\bar{U}_s = \partial_x \bar{Y}_s \cdot ( \partial_x \bar{X}_s )^{-1}$ is also an It\^o process and hence there exist $(b_s)$ and $(\hat{Z}_s)$ such that
\begin{equation*}
\bar{U}_s = \bar{U}_1 - \int_s^1 b_r \d r - \int_s^1 \hat{Z}_r \d W_r.
\end{equation*}
For the following we also introduce for an It\^o process $I_s = I_0 - \int_0^s i_r \d r - \int_0^s j_r \d W_r$ the two operators $\Dt$ and $\Dw$ via $(\Dt I)_s := i_s$ and $(\Dw I)_s := j_s$. Using this notation we have
\begin{align*}
\partial_x \bar{Z}_s
&= \Dw \partial_x \bar{Y}_s \\
&= \Dw \left( \bar{U}_s \cdot \partial_x \bar{X}_s \right) \\
&= \bar{U}_s \cdot \Dw \partial_x \bar{X}_s + \Dw \bar{U}_s \cdot \partial_x \bar{X}_s \\
%&= \bar{U}_s \cdot \partial_x \bar{\Sigma} \left( \bar{X}_s, \bar{Y}_s, \bar{Z}_s \right) + \hat{Z}_s \cdot \partial_x \bar{X}_s \\
&= \hat{Z}_s \cdot \partial_x \bar{X}_s,
\end{align*}
\begin{align*}
\partial_x \left[\bar{F} \left( \bar{X}_s, \bar{Y}_s, \bar{Z}_s \right) \right] 
&= \Dt \partial_x \bar{Y}_s \\
&= \Dt \left( \bar{U}_s \cdot \partial_x \bar{X}_s \right) \\
&= \bar{U}_s \cdot \Dt \partial_x \bar{X}_s + \Dt \bar{U}_s \cdot \partial_x \bar{X}_s + \Dw \bar{U}_s \cdot \Dw \partial_x \bar{X}_s \\
&= \bar{U}_s \cdot \partial_x \left[ \bar{M} \left( \bar{X}_s, \bar{Y}_s, \bar{Z}_s \right) \right] + b_s \cdot \partial_x \bar{X}_s,
\end{align*}
where we can further specify
\begin{align*}
&\partial_x \left[ \bar{M} \left( \bar{X}_s, \bar{Y}_s, \bar{Z}_s \right) \right] \\
& \hspace*{1.5cm} = \diff{x}{\bar{M}} \left( \bar{X}_s, \bar{Y}_s, \bar{Z}_s \right) \partial_x \bar{X}_s + \diff{y}{\bar{M}} \left( \bar{X}_s, \bar{Y}_s, \bar{Z}_s \right) \partial_x \bar{Y}_s + \diff{z}{\bar{M}} \left( \bar{X}_s, \bar{Y}_s, \bar{Z}_s \right) \partial_x \bar{Z}_s \\
& \hspace*{1.5cm} = \diff{x}{\bar{M}} \left( \bar{X}_s, \bar{Y}_s, \bar{Z}_s \right) \partial_x \bar{X}_s + \diff{y}{\bar{M}} \left( \bar{X}_s, \bar{Y}_s, \bar{Z}_s \right) \bar{U}_s \partial_x \bar{X}_s + \diff{z}{\bar{M}} \left( \bar{X}_s, \bar{Y}_s, \bar{Z}_s \right) \hat{Z}_s \partial_x \bar{X}_s
\end{align*}
and likewise
\begin{align*}
& \partial_x \left[ \bar{F} \left( \bar{X}_s, \bar{Y}_s, \bar{Z}_s \right) \right] \\
& \hspace*{1.5cm} = \diff{x}{\bar{F}} \left( \bar{X}_s, \bar{Y}_s, \bar{Z}_s \right) \partial_x \bar{X}_s + \diff{y}{\bar{F}} \left( \bar{X}_s, \bar{Y}_s, \bar{Z}_s \right) \bar{U}_s \partial_x \bar{X}_s + \diff{z}{\bar{F}} \left( \bar{X}_s, \bar{Y}_s, \bar{Z}_s \right) \hat{Z}_s \partial_x \bar{X}_s.
\end{align*}
Thus we get
\begin{equation*}
\hat{Z}_s = \partial_x \bar{Z}_s \cdot \left( \partial_x \bar{X}_s \right)^{-1}
\end{equation*}
and
\begin{align}\label{eq:driftBarU}
b_s
&= \diff{x}{\bar{F}} \left( \bar{X}_s, \bar{Y}_s, \bar{Z}_s \right) + \diff{y}{\bar{F}} \left( \bar{X}_s, \bar{Y}_s, \bar{Z}_s \right) \bar{U}_s + \diff{z}{\bar{F}} \left( \bar{X}_s, \bar{Y}_s, \bar{Z}_s \right) \hat{Z}_s \\
& \qquad \qquad + \bar{U}_s \left[ \diff{x}{\bar{M}} \left( \bar{X}_s, \bar{Y}_s, \bar{Z}_s \right) + \diff{y}{\bar{M}} \left( \bar{X}_s, \bar{Y}_s, \bar{Z}_s \right) \bar{U}_s + \diff{z}{\bar{M}} \left( \bar{X}_s, \bar{Y}_s, \bar{Z}_s \right) \hat{Z}_s \right], \nonumber
\end{align}
where the derivatives of $\bar{M}$ and $\bar{F}$ are bounded due to the assumptions made.
Therefore, we see that the dynamics of $\bar{U}$ are linear with exception to the quadratic terms $\bar{U}_s \diff{y}{\bar{M}} (\bar{X}_s, \bar{Y}_s, \bar{Z}_s) \bar{U}_s$ and $\diff{z}{\bar{M}} \left( \bar{X}_s, \bar{Y}_s, \bar{Z}_s \right) \hat{Z}_s$. However, we claim that we can reduce the dynamics of $\bar U$ to a linear BSDE. 

It is straightforward to see that
\begin{equation*}
\diff{y}{\bar{M}} (\bar{X}_s, \bar{Y}_s, \bar{Z}_s) = \left( \begin{array}{cccc}
0 & 0 & 0 & 0 \\
-2 \frac{(\Zo{0}{s})^2}{\sigma_s^2} \frac{\sigma_{a,s}}{\sigma_s} & 0 & 0 & 0 \\
\frac{(\Zo{0}{s})^2}{\sigma_s^2} \mu_{a,s} - 2 \frac{(\Zo{0}{s})^2}{\sigma_s^2} \frac{\sigma_{a,s}}{\sigma_s} \mu_s & 0 & 0 & 0
\end{array} \right).
\end{equation*}
Note that $\alpha := -2 \frac{(\Zo{0}{s})^2}{\sigma_s^2} \frac{\sigma_{a,s}}{\sigma_s}$ and $ \beta := \frac{(\Zo{0}{s})^2}{\sigma_s^2} \mu_{a,s} - 2 \frac{(\Zo{0}{s})^2}{\sigma_s^2} \frac{\sigma_{a,s}}{\sigma_s} \mu_s$ are both uniformly bounded, and we have
\begin{equation*}
\diff{y}{\bar{M}} \left( \bar{X}_s, \bar{Y}_s, \bar{Z}_s \right) \bar{U}_s = \left( \begin{array}{ccc}
0 & 0 & 0 \\
\alpha \cdot \u{1}{s} & \alpha \cdot \u{2}{s} & \alpha \cdot \u{3}{s} \\
\beta \cdot \u{1}{s} & \beta \cdot \u{2}{s} & \beta \cdot \u{3}{s}
\end{array} \right),
\end{equation*}
which is bounded independently of $[t,1]$ (cf.\ in Theorem \ref{thm:U1isBounded}). %Thus, $\bar{U}_s \bar{M}_y (\bar{X}_s, \bar{Y}_s, \bar{Z}_s) \bar{U}_s$ is also just a linear term. 

Moreover, note that 
\begin{equation*}
\diff{z}{\bar{M}} \left( \bar{X}_s, \bar{Y}_s, \bar{Z}_s \right) 
= \left( \begin{array}{cccc} 0 & 0 & 0 & 0 \\
\frac{2 \Zo{0}{s}}{\sigma_s^2} & 0 & 0 & 0 \\
\mu_s \frac{2 \Zo{0}{s}}{\sigma_s^2} & 0 & 0 & 0 \end{array} \right)
\end{equation*}
only depends on the solution components $(\X{2}{}, \X{3}{}, \Y{0}{},\Zo{0}{})$. Hence, together with the estimates of Theorem \ref{thm:U1isBounded}, we conclude that $\diff{x}{\bar{M}}(\bar{X}_s,\bar{Y}_s,\bar{Z}_s)$ is bounded. 
Since $\bar U$ is bounded on $[t,1]$, the term $\bar U_s \diff{z}{\bar{M}} \left( \bar{X}_s, \bar{Y}_s, \bar{Z}_s \right) \hat{Z}_s$ in Equation \eqref{eq:driftBarU} can be shifted, via a Girsanov measure change, into the Brownian motion $W$. Similary, the term $\diff{z}{\bar F} \left( \bar{X}_s, \bar{Y}_s, \bar{Z}_s \right) \hat{Z}_s$ in Equation \eqref{eq:driftBarU} can be shifted into $W$. To sum up, there exists a Brownian motion $\hat W$ under an equivalent probability measure such that $(\bar U, \hat Z)$ solves the BSDE on $[t,1]$ driven by $\hat W$ with linear driver 
\begin{align*}
f(s,y,z) = \diff{x}{\bar{F}} \left( \bar{X}_s, \bar{Y}_s, \bar{Z}_s \right) + \diff{y}{\bar{F}} \left( \bar{X}_s, \bar{Y}_s, \bar{Z}_s \right) y + y \left[ \diff{x}{\bar{M}} \left( \bar{X}_s, \bar{Y}_s, \bar{Z}_s \right) + \diff{y}{\bar{M}} \left( \bar{X}_s, \bar{Y}_s, \bar{Z}_s \right) \bar{U}_s \right]
\end{align*}
and terminal condition $\nabla \bar \xi(\bar X_1)$. Observe that the terminal condition and all coefficients are bounded by some constant independent of $t$ and $x$. Therefore, also $\bar U$ is bounded independently of $t$ and $x$.
By Lemma \ref{UNIqXYZM} this yields that $I^M_{\mathrm{max}} = [0,1]$.
\end{proof}

\begin{remarks} The second and third derivatives do not have to be bounded. It would suffice if the second and third derivatives of $\mu$ divided by $\sigma^2$ and the second and third derivatives of $\sigma$ divided by $\sigma$ are bounded.
\end{remarks}

\begin{lemma}
\label{lemma:UthirdTimeDiff}
Let $g$, $\mu$ and $\sigma$ fulfill Assumption \ref{asump:conditionsU1bounded} and their second and third derivatives be bounded. Then the decoupling field $u$ of FBSDE \eqref{fbsde:base} is three times weakly differentiable w.r.t.\ to the initial condition $x \in \IR^3$ with uniformly bounded derivatives.
\end{lemma}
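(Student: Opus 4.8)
The plan is to iterate the construction of Lemma~\ref{lemma:fbsdeSecondOrderBounded} one level further. That lemma augments the original FBSDE~\eqref{fbsde:base} by the dynamics~\eqref{bsdes:ux} of its first-order gradient processes, producing the system~\eqref{align:fbsde:secondOrder} whose strongly regular Markovian decoupling field is $(\u{0}{}, \u{1}{}, \u{2}{}, \u{3}{}) = (u, \diff{x_1}{u}, \diff{x_2}{u}, \diff{x_3}{u})$ and whose first-order gradient $\bar U = \partial_x(\u{0}{}, \u{1}{}, \u{2}{}, \u{3}{})$ --- which encodes the second derivatives of $u$ --- is bounded uniformly in the starting time $t$. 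I would now regard~\eqref{align:fbsde:secondOrder} as a new base FBSDE with coefficients $(\bar M, \bar\Sigma, \bar F)$ and terminal condition $\bar\xi$, and repeat the same procedure: augment it by the dynamics of its own gradient process $\bar U$, truncating each nonlinear occurrence of $\bar U$ exactly as the components $\Y{i}{}$ were truncated in~\eqref{align:fbsde:secondOrder}. The resulting third-order system has a strongly regular decoupling field encoding $u$ together with its first and second derivatives, and the gradient of this decoupling field encodes precisely the third derivatives of $u$.

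First I would check that the extended system satisfies (MLLC) so that Theorem~\ref{GLObalexistM} applies. This is where the additional hypotheses enter: the drift $b_s$ of $\bar U$ in Equation~\eqref{eq:driftBarU} is built from $\partial_x\bar M$, $\partial_y\bar M$, $\partial_z\bar M$, $\partial_x\bar F$, $\partial_y\bar F$, $\partial_z\bar F$, and differentiating these once more requires the second and third derivatives of $g$, $\mu$ and $\sigma$ to be bounded. With those bounds the coefficients of the third-order system are locally Lipschitz with the requisite degenerate $z$-structure (only the lowest component $\Zo{0}{}$ enters $\bar\Sigma$), so $L_{\bar\Sigma,z}=0$ and (MLLC) holds.

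The analytic core then mirrors the proof of Lemma~\ref{lemma:fbsdeSecondOrderBounded}. Since $(\partial_x\bar X_s)^{-1}$ is again an It\^o process on every $[t,1]\subseteq I^M_{\mathrm{max}}$, the chain rule presents the third-order gradient as an It\^o process whose drift is read off from the analogue of Equation~\eqref{eq:driftBarU}. Using that $\bar U$ is already bounded by Lemma~\ref{lemma:fbsdeSecondOrderBounded}, the truncation renders the genuinely quadratic terms Lipschitz, while the terms that are linear in the next-order martingale integrand $\hat Z$ are absorbed into a new Brownian motion by a Girsanov change of measure. What remains is a linear BSDE with bounded coefficients and terminal condition $\nabla^2\bar\xi(\bar X_1)$, which is bounded precisely because $g''$, $g'''$ and the corresponding second and third derivatives of $\mu$ and $\sigma$ are bounded. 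Hence the third-order gradient is bounded by a constant independent of $t$; Lemma~\ref{EXPlosionM} then rules out a singularity and forces $I^M_{\mathrm{max}}=[0,1]$, while the uniqueness-through-strong-regularity argument of Lemma~\ref{lemma:fbsdeSecondOrderBounded} identifies the decoupling field components of the third-order system a.e.\ with $u$ and its first and second partial derivatives. The uniform bound on the top-order gradient is exactly the statement that $u$ is three times weakly differentiable in $x$ with uniformly bounded derivatives.

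The main obstacle I expect is the bookkeeping rather than any new idea: the state dimension grows, $\bar F$ and its derivatives become unwieldy, and one must verify carefully that every nonlinear appearance of the highest-order gradient is either truncated (to preserve the Lipschitz estimates) or linear in $\hat Z$ (to be removed by Girsanov), so that the BSDE governing the top-order gradient is genuinely linear with $t$-independent bounded data. Producing the uniform bound, rather than merely a $t$-dependent one, is the crux, and it is the boundedness of the third derivatives of $g$, $\mu$ and $\sigma$ that supplies the bounded terminal and driver data needed to close the Gronwall-type estimate.
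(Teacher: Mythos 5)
Your proposal is correct and follows essentially the same route as the paper: the paper's own (sketched) proof likewise extends the second-order system \eqref{align:fbsde:secondOrder} by the dynamics of its gradient $\bar U$, identifies the resulting decoupling field components with $u$ and its first and second derivatives via strong regularity, and bounds the top-order gradient by reducing its drift to a linear BSDE with bounded coefficients (Girsanov absorbing the $\hat Z$ terms) before invoking Lemma~\ref{EXPlosionM} to conclude $I^M_{\mathrm{max}}=[0,1]$. The only cosmetic difference is that the paper tames the quadratic term through the structural fact that $\diff{y}{\bar{M}}$ has entries only in its first column (so $\diff{y}{\bar{M}}\bar U_s$ involves only the already-bounded lower-order derivatives), whereas you invoke truncation plus Lemma~\ref{lemma:fbsdeSecondOrderBounded}; both devices serve the identical purpose.
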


\begin{proof}
This proof is completely analogous the proof of Lemma \ref{lemma:fbsdeSecondOrderBounded}. Therefore, we only give a sketch.

Extend the system \eqref{align:fbsde:secondOrder} by the dynamics of $\p{\bar{Y}}{ij}{} := \u{ij}{} := \diff{x_j}{\u{i}{}}$ for all $i,j \in \{1,2,3\}$ as obtained in the proof of Lemma \ref{lemma:fbsdeSecondOrderBounded} and by the corresponding entries in the decoupling field. Then argue analogously to the proof of Lemma \ref{lemma:fbsdeSecondOrderBounded} that for every $i \in \{0,1,2,3 \} $ the $\u{i}{}$ of FBSDE \eqref{align:fbsde:secondOrder} coincides with the $\u{i}{}$ of the extended system. Redefine, if necessary, the vectors $\bar{X}$, $\bar{Y}$,$\bar{Z}$ and the functions $\bar{M}$, $\bar{\Sigma}$, $\bar{\xi}$, $\bar{F}$ such that for the extended system we have
\begin{equation*}
\bar{X}_s = x + \int_t ^s \bar{M} \left( \bar{X}_r, \bar{Y}_r, \bar{Z}_r \right) \d r + \int_t^s \bar{\Sigma} \d W_r
\end{equation*}
and
\begin{equation*}
\bar{Y}_s = \bar{\xi} \left( \bar{X}_1 \right) - \int_s^1 \bar{F} \left( \bar{X}_r, \bar{Y}_r, \bar{Z}_r \right) \d r - \int_s^1 \bar{Z}_r \d W_r.
\end{equation*}
Also define $\bar{U}_s$ as the partial derivatives of the decoupling field $u (s,\bar{X}_s)$ of the extended system for all $s \in [t,1]$. Again there exist $(b_s)$ and $(\hat{Z}_s)$ such that
\begin{equation*}
\bar{U}_s = \bar{U}_1 - \int_s^1 b_r \d r - \int_s^1 \hat{Z}_r \d W_r.
\end{equation*}
By the same calculation as in the proof of Lemma \ref{lemma:fbsdeSecondOrderBounded} we obtain that
\begin{equation*}
\hat{Z}_s = \partial_x \bar{Z}_s \cdot \left( \partial_x \bar{X}_s \right)^{-1}
\end{equation*}
and
\begin{align*}
b_s
&= \diff{x}{\bar{F}} \left( \bar{X}_s, \bar{Y}_s, \bar{Z}_s \right) + \diff{y}{\bar{F}} \left( \bar{X}_s, \bar{Y}_s, \bar{Z}_s \right) \bar{U}_s + \diff{z}{\bar{F}} \left( \bar{X}_s, \bar{Y}_s, \bar{Z}_s \right) \hat{Z}_s \\
& \qquad \qquad + \bar{U}_s \left[ \diff{x}{\bar{M}} \left( \bar{X}_s, \bar{Y}_s, \bar{Z}_s \right) + \diff{y}{\bar{M}} \left( \bar{X}_s, \bar{Y}_s, \bar{Z}_s \right) \bar{U}_s + \diff{z}{\bar{M}} \left( \bar{X}_s, \bar{Y}_s, \bar{Z}_s \right) \hat{Z}_s \right].
\end{align*}
Analogous to the proof above, $\diff{x}{\bar{F}}$, $\diff{y}{\bar{F}}$, $\diff{z}{\bar{F}}$, $\diff{x}{\bar{M}}$, $\diff{y}{\bar{M}}$ and $\diff{z}{\bar{M}}$ are bounded while additionally $\diff{y}{\bar{M}}$ only has entries in the first column which allows us to conclude that $\diff{y}{\bar{M}}(\bar{X}_s,\bar{Y}_s,\bar{Z}_s) \bar{U}_s$ is bounded. Furthermore every coefficient in front of $\hat{Z}$ is bounded on every Interval $[t,1] \subset I_{\mathrm{max}}^M$ and can therefore be transformed away with Girsanov's Theorem. Hence we have linear dynamics for $\bar{U}$ with bounded coefficients which yields that it is bounded independently of the interval $[t,1]$, giving $I_{\mathrm{max}}^M = [0,1]$.
\end{proof}

\begin{lemma}
\label{lemma:u1GreaterZero}
Let $g$, $\mu$ and $\sigma$ fulfill Assumption \ref{asump:conditionsU1bounded}, their first and second derivatives be bounded and $g' \geq \delta > 0$. Then the weak derivative $\diff{x_1}{u}$ of the decoupling field $u$ from the FBSDE \eqref{fbsde:base} fulfills
\begin{equation}\label{est:1overU1}
\left\Vert \frac{1}{\diff{x_1}{u}} \right\Vert_\infty 
\leq \left\Vert \frac{1}{g'} \right\Vert_\infty + \Vert \diff{x_1}{u} \Vert_\infty \left( \left\Vert \frac{\diff{a}{\mu}}{\sigma^2} \right\Vert_\infty + 2 \left\Vert \frac{\mu}{\sigma^2} \right\Vert_\infty \left\Vert \frac{\diff{a}{\sigma}}{\sigma} \right\Vert_ \infty + \frac{2}{\epsilon^2} \Vert \diff{x_2}{u} \Vert_\infty \left\Vert \frac{\diff{a}{\sigma}}{\sigma} \right\Vert_\infty \right)
\end{equation}
and in particular $\diff{x_1}{u}$ is bounded away from $0$.
\end{lemma}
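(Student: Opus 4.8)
The plan is to derive a backward equation for the reciprocal $1/\u{1}{}$ of the first gradient process and to read off the bound from it. Recall from Lemma \ref{lemma:dynamicsFirstDiff} that, after inserting $\u{3}{}\equiv-1$ (Theorem \ref{thm:U1isBounded}), the process $\u{1}{}$ solves
\begin{equation*}
\u{1}{s} = g'(\X{1}{1}) + \int_s^1 \u{1}{r}\,a_r\,\d r - \int_s^1 \Z{1}{r}\,\d\W{r}, \qquad a_r := \frac{Z_r^2}{\sigma_r^2}\Big(-\mu_{a,r} + 2\mu_r\tfrac{\sigma_{a,r}}{\sigma_r} - 2\u{2}{r}\tfrac{\sigma_{a,r}}{\sigma_r}\Big),
\end{equation*}
where $\W{}$ is a Brownian motion under an equivalent measure $\tilde\IP$. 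Since the first and second derivatives are assumed bounded, Lemma \ref{lemma:fbsdeSecondOrderBounded} guarantees that $\diff{x_1}{u}$ is continuous and bounded and that $\diff{x_2}{u}=\u{2}{}$ is bounded; in particular Lemma \ref{lemma:ZequalsU1} applies and yields the crucial identity $Z_r=\u{1}{r}$, which I will use to turn $a_r/\u{1}{r}$ into a quantity that is merely linear in $\u{1}{r}$.

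First I would establish that $\u{1}{}$ is bounded away from $0$, so that passing to the reciprocal is legitimate. The coefficient $a$ is bounded (by Theorem \ref{thm:U1isBounded} the processes $Z$ and $\u{2}{}$ are bounded, and the remaining ratios are bounded by assumption), so the linear representation formula for the above BSDE gives $\u{1}{s}=\tilde\E[g'(\X{1}{1})\exp(\int_s^1 a_r\,\d r)\mid\mathcal{F}_s]\geq\delta\,e^{-\Vert a\Vert_\infty}>0$, where $\tilde\E$ is expectation under $\tilde\IP$ and where I use $g'\geq\delta$. This uniform lower bound also ensures that $\Z{1}{}/(\u{1}{})^2$ is square integrable, so that the martingale appearing below indeed has vanishing conditional expectation.

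Next I would apply It\^o's formula to $w_s:=1/\u{1}{s}$. Writing the dynamics of $\u{1}{}$ in forward form and differentiating produces
\begin{equation*}
w_s = \frac{1}{g'(\X{1}{1})} - \int_s^1\Big(\frac{a_r}{\u{1}{r}} + \frac{(\Z{1}{r})^2}{(\u{1}{r})^3}\Big)\,\d r + \int_s^1\frac{\Z{1}{r}}{(\u{1}{r})^2}\,\d\W{r}.
\end{equation*}
The decisive observation is that the It\^o correction term $(\Z{1}{r})^2/(\u{1}{r})^3$ is nonnegative, so after taking $\mathcal{F}_s$-conditional expectation under $\tilde\IP$ (which annihilates the stochastic integral) it may be dropped for an upper bound. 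Using $Z_r=\u{1}{r}$ one has $a_r/\u{1}{r}=(\u{1}{r}/\sigma_r^2)\big(-\mu_{a,r}+2\mu_r\sigma_{a,r}/\sigma_r-2\u{2}{r}\sigma_{a,r}/\sigma_r\big)$, whose absolute value is at most $\u{1}{r}$ times $\Vert\tfrac{\diff{a}{\mu}}{\sigma^2}\Vert_\infty+2\Vert\tfrac{\mu}{\sigma^2}\Vert_\infty\Vert\tfrac{\diff{a}{\sigma}}{\sigma}\Vert_\infty+\tfrac{2}{\epsilon^2}\Vert\diff{x_2}{u}\Vert_\infty\Vert\tfrac{\diff{a}{\sigma}}{\sigma}\Vert_\infty$, the factor $2/\epsilon^2$ coming from $1/\sigma_r^2\leq 1/\epsilon^2$ in the $\u{2}{}$-term. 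Bounding $\u{1}{r}\leq\Vert\diff{x_1}{u}\Vert_\infty$, using that $[s,1]$ has length at most $1$, and estimating $1/g'\leq\Vert 1/g'\Vert_\infty$ then yields exactly \eqref{est:1overU1}. Since this bound is uniform over the starting time $t$ and over the initial value $x$, and $w_s=1/\diff{x_1}{u}(s,X_s)$, taking the essential supremum gives the claimed bound on $\Vert 1/\diff{x_1}{u}\Vert_\infty$ and hence that $\diff{x_1}{u}$ is bounded away from $0$.

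The step I expect to be the main obstacle is the preliminary strict positivity of $\u{1}{}$: everything downstream (the reciprocal, the application of It\^o's formula, and the martingale property used to discard the stochastic integral) rests on a uniform positive lower bound, and this is precisely where the extra hypothesis $g'\geq\delta>0$ of this lemma is indispensable — the earlier estimates only gave $\u{1}{}\geq0$. The remainder — matching the individual coefficients to the norms in \eqref{est:1overU1}, in particular tracking the $1/\epsilon^2$ and the appearance of $\Vert\diff{x_2}{u}\Vert_\infty$ — is routine once the favourable sign of the It\^o correction has been exploited.
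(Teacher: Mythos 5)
Your proposal is correct, and its central object is the same as in the paper's proof: both apply It\^o's formula to the reciprocal of $\u{1}{}$, use the identification $Z_r=\u{1}{r}$ from Lemma \ref{lemma:ZequalsU1} (legitimised by the continuity of $\diff{x_1}{u}$ coming from Lemma \ref{lemma:fbsdeSecondOrderBounded}), and arrive at exactly the same BSDE with drift $\tfrac{1}{V_r}\bigl((\hat Z_r)^2-\tfrac{\mu_{a,r}-2\mu_r\sigma_{a,r}/\sigma_r+2\u{2}{r}\sigma_{a,r}/\sigma_r}{\sigma_r^2}\bigr)$. Where you genuinely diverge is in the two technical steps surrounding this computation. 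First, you establish strict positivity of $\u{1}{}$ \emph{up front}, via the explicit representation formula for the linear BSDE, $\u{1}{s}=\tilde\E[g'(\X{1}{1})\exp(\int_s^1 a_r\,\d r)\mid\cF_s]\geq\delta e^{-\Vert a\Vert_\infty}$, which is what licenses taking the reciprocal everywhere; the paper instead defines $V$ only on the interval $(t_0,1]$ on which $\diff{x_1}{u}$ is nonvanishing (positivity near $s=1$ coming from $g'\geq\delta$ and continuity) and concludes $t_0=0$ \emph{a posteriori} from the uniformity of the bound. Second, you extract the estimate \eqref{est:1overU1} by conditioning under the equivalent measure and discarding the nonnegative It\^o correction $(\Z{1}{r})^2/(\u{1}{r})^3$, whereas the paper invokes Corollary 2.2 of \cite{Kobylanski2000} for quadratic BSDEs to bound $\Vert V\Vert_\infty$. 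Your route is more elementary and self-contained, at the price of the preliminary positivity step; what it buys is independence from the quadratic-BSDE machinery. The one point you should make fully precise is the true-martingale property of the stochastic integrals you annihilate by conditioning (both in the representation formula and in the bound for $w$): this follows because $\Z{1}{}$ has finite BMO norm (established in the proof of Lemma \ref{lemma:dynamicsFirstDiff}), BMO is stable under the bounded Girsanov change defining $\W{}$, and $1/(\u{1}{})^2$ is bounded thanks to your positivity step — you flag this via square integrability, which is the right idea but deserves the BMO justification.
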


\begin{proof}
By Lemma \ref{lemma:fbsdeSecondOrderBounded} the decoupling field of the FBSDE \eqref{fbsde:base} exists on the whole interval $[0,1]$ and is twice weakly differentiable. In particular $\diff{x_1}{u}$ is continuous (see e.g.\ Theorem 4.2.17 in \cite{Fromm15}), and hence we can apply Lemma \ref{lemma:ZequalsU1} yielding $\Zo{0}{r} = \diff{x_1}{u}\left(r,\X{1}{r}, \X{2}{r}, \X{3}{r} \right)$ for all $r \in [0,1]$. Also using Lemma \ref{lemma:u3=-1:u1Bounded} we know that $\u{1}{}$ is bounded by some constant for every starting time $t \in I_{\mathrm{max}}^M = [0,1]$ and every initial value $x \in \IR^3$.

Now we set $V_r := \frac{1}{\diff{x_1}{u} \left(r,\X{1}{r}, \X{2}{r}, \X{3}{r} \right)}$ for all $r \in (t_0,1]$ where $t_0:= \inf \{ t \geq 0 \vert \diff{x_1}{u} (t,x) = 0 \text{ for at least one } x \in \IR^3 \}$ with the convention that $\inf \emptyset = 0$.
We immediately get that $\frac{1}{V_r} \leq \Vert \diff{x_1}{u} \Vert_\infty < \infty$ and the dynamics
%\begin{align*}
%f(u_s) &= f(u_1) - \int_s^1 f'(u_r) \d u_r - \int_s^1 \frac{1}{2} f''(u_r) \d \qvar{u}{r} \\
%&= f(u_1) - \int_s^1 f'(u_r) \mu \d r - \int_s^1 \frac{1}{2} f''(u_r) \sigma^2 \d r - \int_s^1 f'(u_r) \sigma \d W_r \\
%& \stackrel{f(x) = x^{-1}}{=} \frac{1}{u_1} - \int_s^1 - \frac{1}{u_r^2} \mu + \frac{1}{u_r^3} \sigma^2 \d r - \int_s^1 - \frac{1}{u_r^2} \sigma \d W_r \\
%\frac{1}{u_s} &= \frac{1}{g'(\X{1}{1})} - \int_s^1 - \frac{ (u_r)^3}{(u_r)^2} \frac{1}{\sigma_r^2} \left( \mu_{a,r} - 2 \mu_r \frac{\sigma_{a,r}}{\sigma_r} + 2 \u{2}{r} \frac{\sigma_{a,r}}{\sigma_r} \right) + \frac{ 1}{(u_r)^3} (Z_r)^2 \d r - \int_s^1 - \frac{1}{(u_r)^2} Z_r \d \W{r}
%\end{align*}
\begin{align*}
V_s
&= \frac{1}{g'(\X{1}{1})} - \int_s^1 \left( V_r ^3 \left( \Zo{1}{r} \right)^2 - \frac{1}{V_r} \frac{\mu_{a,r} - 2 \mu_r \frac{\sigma_{a,r}}{\sigma_r} + 2 \u{2}{r} \frac{\sigma_{a,r}}{\sigma_r} }{\sigma_r^2} \right) \d r - \int_s ^1 - \Zo{1}{r} V_r^2 \d \W{r} \\
&= \frac{1}{g'(\X{1}{1})} - \int_s^1 \frac{1}{V_r} \left( \left( \hat{Z}_r \right)^2 - \frac{\mu_{a,r} - 2 \mu_r \frac{\sigma_{a,r}}{\sigma_r} + 2 \u{2}{r} \frac{\sigma_{a,r}}{\sigma_r} }{\sigma_r^2} \right) \d r - \int_s ^1 \hat{Z}_r \d \W{r},
\end{align*}
where $\u{2}{r} := \diff{x_2}{u} \left( r, \X{1}{r}, \X{2}{r}, \X{3}{r} \right)$, $\hat{Z}_r := - \frac{\Zo{1}{r}}{V_r^2}$ and $\tilde W$ is defined as in the proof of Lemma \ref{lemma:fbsdeSecondOrderBounded}.

Using that $\frac{1}{V_s} \leq \Vert \diff{x_1}{u} \Vert_\infty$ we can apply Corollary 2.2 of \cite{Kobylanski2000} to obtain
% by setting $b = \Vert \frac{1}{V} \diff{a}{\mu} \frac{1}{\sigma^2} \Vert_\infty \leq \Vert \diff{x_1}{u} \Vert_\infty \Vert \diff{a}{\mu} \Vert_\infty \frac{1}{\epsilon^2}$, $C = \Vert \frac{1}{V} \Vert_\infty = \Vert \diff{x_1}{u} \Vert_\infty$ and $a=0$ with part (i) of this corollary that
\begin{equation*}
%\label{eq:bound1DivU}
\Vert V \Vert_\infty 
\leq \left\Vert \frac{1}{g'} \right\Vert_\infty + \Vert \diff{x_1}{u} \Vert_\infty \left( \left\Vert \frac{\diff{a}{\mu}}{\sigma^2} \right\Vert_\infty + 2 \left\Vert \frac{\mu}{\sigma^2} \right\Vert_\infty \left\Vert \frac{\diff{a}{\sigma}}{\sigma} \right\Vert_ \infty + \frac{2}{\epsilon^2} \Vert \diff{x_2}{u} \Vert_\infty \left\Vert \frac{\diff{a}{\sigma}}{\sigma} \right\Vert_\infty \right) < \infty
\end{equation*}
because $\diff{x_1}{u}$ and $\diff{x_2}{u}$ are bounded by Theorem \ref{thm:U1isBounded}. Since this bound is independent of $s$ we also get that
\begin{equation*}
\diff{x_1}{u} \left(s,\X{1}{s}, \X{2}{s}, \X{3}{s} \right) = \frac{1}{V_s} \geq \frac{1}{\Vert V \Vert_\infty} > 0
\end{equation*}
for all $s$ where $V$ is defined. Because, as stated above, $\diff{x_1}{u}$ is continuous, we get that $t_0 = 0$ and that hence Equation \eqref{est:1overU1} holds true.
\end{proof}

\begin{lemma}
\label{lemma:Z1Bounded}
Let $g$, $\mu$ and $\sigma$ fulfill Assumption \ref{asump:conditionsU1bounded} and their second derivatives be bounded. Then for the problem \eqref{align:fbsde:secondOrder} it holds for all $s \in [0,1]$ almost surely that
\begin{equation*}
\vert \Zo{1}{s} \vert \leq \Vert \diff{x_1}{\u{1}{}} \Vert_\infty < \infty.
\end{equation*}
\end{lemma}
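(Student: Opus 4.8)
The plan is to follow the argument of Lemma~\ref{lemma:ZequalsU1} almost verbatim, now applied to the component $\u{1}{}$ of the decoupling field of the extended system \eqref{align:fbsde:secondOrder} in place of $u$, and to its backward part $\Y{1}{}$ in place of $Y$. First I would invoke Lemma~\ref{lemma:fbsdeSecondOrderBounded}: under the present hypotheses $I^M_{\mathrm{max}}=[0,1]$, the extended FBSDE admits a unique strongly regular Markovian decoupling field $(\u{0}{},\u{1}{},\u{2}{},\u{3}{})$ on $[0,1]$ with $\u{1}{}=\diff{x_1}{u}$, and $u$ is twice weakly differentiable with uniformly bounded derivatives; in particular $\diff{x_1}{\u{1}{}}$ is bounded and $\u{1}{}$ is Lipschitz in all three spatial variables. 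Moreover, being strongly regular the decoupling field is controlled in $z$ (Theorem~\ref{GLObalexistM}), so the whole integrand vector, and hence $\Zo{1}{}$, is bounded on $[0,1]$; together with Theorem~\ref{thm:U1isBounded} and $\sigma\ge\epsilon$ this makes the drift of the $\Y{1}{}$-equation bounded.

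The key identity is $\Zo{1}{s}=\lim_{h\searrow 0}\frac1h\,\E[\,\Y{1}{s+h}(W_{s+h}-W_s)\mid\mathcal F_s\,]$. Applying It\^o's product rule to $\Y{1}{r}(W_r-W_s)$ on $[s,s+h]$ and taking $\E[\cdot\mid\mathcal F_s]$ kills the two stochastic integrals and leaves $\frac1h\E[\int_s^{s+h}\Zo{1}{r}\,\d r\mid\mathcal F_s]\to\Zo{1}{s}$, plus a contribution from the (bounded) drift of $\Y{1}{}$; since that drift is multiplied by $W_r-W_s$ and integrated over a set of length $h$, its conditional expectation is of order $h\cdot\sqrt h$ and vanishes after division by $h$. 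This is the one place where the present lemma genuinely differs from Lemma~\ref{lemma:ZequalsU1}, whose backward driver vanishes, and prior boundedness of $\Zo{1}{}$ is exactly what is needed to control it.

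On the other hand I would substitute the decoupling condition $\Y{1}{s+h}=\u{1}{}(s+h,\X{1}{s+h},\X{2}{s+h},\X{3}{s+h})$ and telescope over the three spatial arguments as in \eqref{est:YdWForZ}. The increments of $\X{2}{}$ and $\X{3}{}$ are absolutely continuous with densities bounded by $\epsilon^{-2}\Vert\Zo{0}{}\Vert_\infty^2$ and $\Vert\mu/\sigma^2\Vert_\infty\Vert\Zo{0}{}\Vert_\infty^2$ respectively, so, using the Lipschitz continuity of $\u{1}{}$ in $x_2,x_3$, the two corresponding terms are bounded by $C\,h\cdot\E[\,|W_{s+h}-W_s|\mid\mathcal F_s\,]/h\to 0$. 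For the remaining term only $\X{1}{}=W$ advances while $\X{2}{s},\X{3}{s}$ stay frozen and $\mathcal F_s$-measurable and $W_{s+h}-W_s$ is independent of $\mathcal F_s$; Gaussian integration by parts then turns $\frac1h\E[\u{1}{}(s+h,\X{1}{s}+z\sqrt h,\X{2}{s},\X{3}{s})(W_{s+h}-W_s)\mid\mathcal F_s]$ into $\int_\IR\diff{x_1}{\u{1}{}}(s+h,\X{1}{s}+z\sqrt h,\X{2}{s},\X{3}{s})\frac{1}{\sqrt{2\pi}}e^{-z^2/2}\,\d z$, whose absolute value is at most $\Vert\diff{x_1}{\u{1}{}}\Vert_\infty$.

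Combining the two computations gives $|\Zo{1}{s}|\le\limsup_{h\searrow 0}\sup_{x\in\IR^3}|\diff{x_1}{\u{1}{}}(s+h,x)|\le\Vert\diff{x_1}{\u{1}{}}\Vert_\infty$, which is finite by Lemma~\ref{lemma:fbsdeSecondOrderBounded}. The only real obstacle is the bounded-drift bookkeeping described above; the Gaussian integration-by-parts step and the vanishing of the finite-variation terms are identical to Lemma~\ref{lemma:ZequalsU1} once one knows that $\u{1}{}$ is Lipschitz with bounded weak derivative $\diff{x_1}{\u{1}{}}$, both of which are furnished by Lemma~\ref{lemma:fbsdeSecondOrderBounded}.
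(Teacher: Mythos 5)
Your proposal is correct and follows essentially the same route as the paper's proof: the identity $\Zo{1}{s}=\lim_{h\searrow 0}\frac1h\,\E[\Y{1}{s+h}(W_{s+h}-W_s)\mid\cF_s]$ via It\^o's product rule (with the drift contribution of order $h^{3/2}$ vanishing after division by $h$), the three-term telescoping over the spatial arguments as in \eqref{est:YdWForZ}, Gaussian integration by parts for the $\X{1}{}$-term, and Lemma \ref{lemma:fbsdeSecondOrderBounded} to guarantee $\Vert\diff{x_1}{\u{1}{}}\Vert_\infty<\infty$. Your explicit bookkeeping of the nonvanishing driver (including the term containing $\Zo{1}{}$ itself, controlled via Theorem \ref{GLObalexistM}) is exactly the point where the paper's proof also departs from Lemma \ref{lemma:ZequalsU1}, so the two arguments coincide.
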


\begin{proof}
Note that this proof runs on similar lines as the proof of Lemma \ref{lemma:ZequalsU1}.

Remember that Lemma \ref{lemma:fbsdeSecondOrderBounded} yields that for problem \eqref{align:fbsde:secondOrder} there exists a unique solution on the whole interval $[0,1]$ for every initial condition in $\IR^3$. 
Observe that with It\^o's formula we get for $h > 0$ and $s, s+h \in [0,1]$
\begin{align*}
\frac{1}{h} \E &\left[ \left. \Y{1}{s+h} ( W_{s+h} - W_s ) \right\vert \mathcal{F}_s \right] \\
=& \frac{1}{h} \E \left[ \left. \int_s^{s+h} \Y{1}{r} \d W_r + \int_s^{s+h} (W_{r} - W_s) \d \Y{1}{r} + \int_s^{s+h} 1 \cdot \Zo{1}{r} \d r \right\vert \mathcal{F}_s \right] \\
=& \frac{1}{h} \E \left[ \int_s^{s+h} \Zo{1}{r} \d r + \int_s^{s+h} \Y{1}{r} \d W_r + \int_s^{s+h} (W_{r} - W_s) \Zo{1}{r} \d W_r \right. \\
& \qquad + \int_s^{s+h} (W_{r} - W_s) \left( - \frac{ ( \Zo{0}{r} )^2}{\sigma_r^2} \Y{1}{r} \left( \Y{3}{r} \left( \mu_{a,r} - 2 \mu_r \frac{\sigma_{a,r}}{\sigma_r} \right) - 2 \Y{2}{r} \frac{\sigma_{a,r}}{\sigma_r} \right) \right) \d r \\
& \qquad + \left. \left. \int_s^{s+h} \left( W_r - W_s \right) \left( - \frac{2 \Zo{0}{r}}{\sigma_r^2 } \left( \Y{2}{r} + \Y{3}{r} \mu_r \right) \Zo{1}{r} \right) \d r \right\vert \mathcal{F}_s \right] \\
%=& \frac{1}{h} \E \left[ \left. \int_s^{s+h} Z_r \d r \right\vert \mathcal{F}_s \right] \\
\rightarrow & \Zo{1}{s} \ \ a.s. \quad \text{ as } \quad h \rightarrow 0.
\end{align*}
On the other hand we get by using the decoupling condition that
\begin{align}
\Y{1}{s+h} & ( W_{s+h} -  W_s ) \nonumber \\
=& \u{1}{} \left( s+h, \X{1}{s+h}, \X{2}{s+h}, \X{3}{s+h} \right) (W_{s+h} - W_s ) \nonumber \\
=& \u{1}{} \left( s+h, \X{1}{s+h}, \X{2}{s}, \X{3}{s} \right) (W_{s+h} - W_s ) \label{eq:Z1leqU1Expansion} \\
& \ + \left( \u{1}{} \left( s+h, \X{1}{s+h}, \X{2}{s+h}, \X{3}{s} \right) - \u{1}{} \left( s+h, \X{1}{s+h}, \X{2}{s}, \X{3}{s} \right) \right) (W_{s+h} - W_s ) \nonumber \\
& \ + \left( \u{1}{} \left( s+h, \X{1}{s+h}, \X{2}{s+h}, \X{3}{s+h} \right) - \u{1}{} \left( s+h, \X{1}{s+h}, \X{2}{s+h}, \X{3}{s} \right) \right) (W_{s+h} - W_s ). \nonumber
\end{align}
At first let us take a look at the third summand at the right hand side of \eqref{eq:Z1leqU1Expansion}. Since $\u{1}{}$ is Lipschitz continuous in its fourth argument with some constant $L_{\u{1}{},x_3}^t$ and since furthermore
\begin{equation*}
\X{3}{s+h} = \X{3}{s} + \int_s ^{s+h} \mu_r \frac{(\Zo{0}{r}) ^2}{\sigma_r^2} \d r
\end{equation*}
we can estimate
\begin{align*}
\frac{1}{h} & \left\vert \E \left[ \left. \left( \u{1}{} \left( s+h, \X{1}{s+h}, \X{2}{s+h}, \X{3}{s+h} \right) - \u{1}{} \left( s+h, \X{1}{s+h}, \X{2}{s+h}, \X{3}{s} \right) \right) (W_{s+h} - W_s ) \right\vert \mathcal{F}_s \right] \right\vert \\
%& \leq \frac{1}{h} \E \left[ \left. \left\vert  \u{1}{} \left( s+h, \X{1}{s+h}, \X{2}{s+h}, \X{3}{s+h} \right) - \u{1}{} \left( s+h, \X{1}{s+h}, \X{2}{s+h}, \X{3}{s} \right) \right\vert \left\vert W_{s+h} - W_s  \right\vert \right\vert \mathcal{F}_s \right] \\
& \leq \frac{1}{h} \E \left[ \left. L_{\u{1}{},x_3}^t \left\vert \int_s ^{s+h} \mu_r \frac{(\Zo{0}{r}) ^2}{\sigma_r^2 } \d r \right\vert \left\vert W_{s+h} - W_s  \right\vert \right\vert \mathcal{F}_s \right] \\
& \leq \frac{1}{h} L_{\u{1}{},x_3}^t h \Big\Vert \frac{\mu}{\sigma^2} \Big\Vert_\infty \Vert \Zo{0}{} \Vert_\infty ^2 \E \left[ \left. \left\vert W_{s+h} - W_s  \right\vert \right\vert \mathcal{F}_s \right],
\end{align*}
which clearly goes to $0$ as $h \rightarrow 0$ because $\frac{\mu}{\sigma^2}$ and $\Zo{0}{}$ are bounded by Theorem \ref{thm:U1isBounded}.
Analogously we get, with $L_{\u{1}{},x_2}^t$ being the Lipschitz constant of $\u{1}{}$ in the third argument, that
\begin{align*}
\frac{1}{h} & \left\vert \E \left[ \left. \left( \u{1}{} \left( s+h, \X{1}{s+h}, \X{2}{s+h}, \X{3}{s} \right) - \u{1}{} \left( s+h, \X{1}{s+h}, \X{2}{s}, \X{3}{s} \right) \right) (W_{s+h} - W_s ) \right\vert \mathcal{F}_s \right] \right\vert \\
%& \leq \frac{1}{h} \E \left[ \left. \left\vert \u{1}{} \left( s+h, \X{1}{s+h}, \X{2}{s+h}, \X{3}{s} \right) - \u{1}{} \left( s+h, \X{1}{s+h}, \X{2}{s}, \X{3}{s} \right) \right\vert \left\vert W_{s+h} - W_s  \right\vert \right\vert \mathcal{F}_s \right] \\
%& \leq \frac{1}{h} \E \left[ \left. L_{\u{1}{},x_2}^t \left\vert \int_s ^{s+h} \frac{(\Zo{0}{r}) ^2}{\sigma_r^2 } \d r \right\vert \left\vert W_{s+h} - W_s  \right\vert \right\vert \mathcal{F}_s \right] \\
& \leq \frac{1}{h} L_{\u{1}{},x_2}^t h \Vert \Zo{0}{} \Vert_\infty ^2 \epsilon^{-2} \E \left[ \left. \left\vert W_{s+h} - W_s  \right\vert \right\vert \mathcal{F}_s \right] \\
& \rightarrow 0 \ \ a.s. \quad \text{ for } \quad h \rightarrow 0.
\end{align*}

Now consider the remaining first term on the right hand side of Equation \eqref{eq:Z1leqU1Expansion}.
Using integration by parts we obtain
\begin{align*}
&\E \left[ \left. \u{1}{} \left( s+h, \X{1}{s+h}, \X{2}{s}, \X{3}{s} \right) ( W_{s+h} - W_s) \right| \mathcal{F}_s \right] \\
& \hspace*{5cm} = \int_\IR \u{1}{} \left( s+h, \X{1}{s} + z \sqrt{h}, \X{2}{s}, \X{3}{s} \right) z \sqrt{h} \frac{1}{\sqrt{2 \pi}} e^{-\frac{1}{2} z^2} \d z \\
& \hspace*{5cm} = \int_\IR \diff{x_1}{\u{1}{}} \left( s+h, \X{1}{s} + z \sqrt{h} , \X{2}{s}, \X{3}{s} \right) h \frac{1}{\sqrt{2 \pi}} e^{-\frac{1}{2} z^2} \d z.
\end{align*}
Since $\diff{x_1}{\u{1}{}}$ is bounded as proved in Lemma \ref{lemma:fbsdeSecondOrderBounded} we have
\begin{align*}
& \left\vert \frac{1}{h} \E \left[ \left. \u{1}{} \left( s+h, \X{1}{s+h}, \X{2}{s}, \X{3}{s} \right) ( W_{s+h} - W_s) \right| \mathcal{F}_s \right] \right\vert \\
& \hspace*{5cm} = \left\vert \int_\IR \diff{x_1}{\u{1}{}} \left( s+h, \X{1}{s} + z \sqrt{h} , \X{2}{s}, \X{3}{s} \right) \frac{1}{\sqrt{2 \pi}} e^{-\frac{1}{2} z^2} \d z \right\vert \\
%& \hspace*{5cm} \leq \int_\IR \left\Vert \diff{x_1}{\u{1}{}} \right\Vert_\infty \frac{1}{\sqrt{2 \pi}} e^{-\frac{1}{2} z^2} \d z \\
& \hspace*{5cm} \leq \Vert \diff{x_1}{\u{1}{}} \Vert_\infty.
\end{align*}
Putting the derived estimates together we get
\begin{equation*}
\left\vert \Zo{1}{s} \right\vert
= \left\vert \lim_{h \searrow 0} \frac{1}{h} \E \left[ \left. \Y{1}{s+h} ( W_{s+h} - W_s ) \right\vert \mathcal{F}_s \right] \right\vert \\
\leq \left\Vert \diff{x_1}{\u{1}{}} \right\Vert_\infty.
\end{equation*}
By Lemma \ref{lemma:fbsdeSecondOrderBounded}, $\Vert \diff{x_1}{\u{1}{}} \Vert_\infty < \infty$, which further implies the result.
\end{proof}

\begin{proposition}
\label{prop:conditionsStronSolution}
Let $g$, $\mu$ and $\sigma$ fulfill Assumption \ref{asump:conditionsU1bounded}, let their first, second and third derivatives as well as $\sigma$ and $\frac{1}{g'}$ be bounded.
Then the requirements of Theorem \ref{thm:stoppingB:strongSolution} are fulfilled.
\end{proposition}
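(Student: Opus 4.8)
The plan is to check, one at a time, the four hypotheses of Theorem~\ref{thm:stoppingB:strongSolution}: (a) that Assumption~\ref{asump:conditionsU1bounded} holds; (b) that $\mu$, $\sigma$ and their derivatives are bounded; (c) that $\diff{x_1}{u}\geq\delta>0$; and (d) that $\diff{x_1}{u}$ is Lipschitz continuous in every argument. Hypothesis (a) is assumed outright. For (b), $\sigma$ and the first derivatives of $\mu,\sigma$ are bounded by hypothesis, and $\mu$ itself is bounded by Remark~\ref{remark:muBounded} (Assumption~\ref{asump:conditionsU1bounded} together with bounded $\sigma$). For (c), recall that $g=F_\nu^{-1}\circ\Phi$ gives $g'\geq 0$, so boundedness of $\frac{1}{g'}$ is equivalent to $g'\geq\delta>0$ with $\delta=\|\tfrac{1}{g'}\|_\infty^{-1}$; since the first and second derivatives of $g,\mu,\sigma$ are bounded, Lemma~\ref{lemma:u1GreaterZero} applies and yields $\|\tfrac{1}{\diff{x_1}{u}}\|_\infty<\infty$, i.e.\ $\diff{x_1}{u}$ is bounded away from zero.

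It remains to establish (d). Lemma~\ref{lemma:UthirdTimeDiff} (applicable since the second and third derivatives of $g,\mu,\sigma$ are bounded) shows that $u$ is three times weakly differentiable in the initial value with uniformly bounded derivatives; in particular $\diff{x_1}{u}$ is a $C^2$ function of the space variable $(x_1,x_2,x_3)$ with bounded first and second spatial derivatives, which already gives Lipschitz continuity of $\diff{x_1}{u}$ in each space variable. The genuine work is the Lipschitz continuity in the time argument, and this is where I expect the main difficulty to lie.

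For the time variable I would argue probabilistically on the extended FBSDE~\eqref{align:fbsde:secondOrder}, whose decoupling field component $\u{1}{}$ coincides with $\diff{x_1}{u}$ (Lemma~\ref{lemma:fbsdeSecondOrderBounded}). Fix $x\in\IR^3$ and $t_1<t_2$ in $[0,1]$, and run \eqref{align:fbsde:secondOrder} from the deterministic datum $\bar X_{t_1}=x$. Using the decoupling condition $\Y{1}{s}=\diff{x_1}{u}(s,\bar X_s)$, taking expectations in the $\Y{1}{}$-equation (the stochastic integral drops out since $\Zo{1}{}$ is bounded by Lemma~\ref{lemma:Z1Bounded}), and noting that the driver of $\Y{1}{}$ is bounded under the present hypotheses, one obtains
\begin{equation*}
\big|\diff{x_1}{u}(t_1,x)-\diff{x_1}{u}(t_2,x)\big|
\leq \big|\E[\diff{x_1}{u}(t_2,\bar X_{t_2})-\diff{x_1}{u}(t_2,x)]\big| + C\,|t_2-t_1|.
\end{equation*}
The first term I would control by freezing the time slot and applying It\^o's formula to the $C^2$ map $\phi(\cdot):=\diff{x_1}{u}(t_2,\cdot)$ along $\bar X$. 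Because $\bar\Sigma=(1,0,0)^\top$, the It\^o correction is $\tfrac12\,\partial_{x_1}^3 u(t_2,\bar X_r)$, which is bounded by Lemma~\ref{lemma:UthirdTimeDiff}, while the first-order term $\nabla\phi\cdot\bar M$ is bounded since both $\nabla\diff{x_1}{u}(t_2,\cdot)$ and $\bar M$ are bounded; taking expectations therefore yields $|\E[\phi(\bar X_{t_2})-\phi(x)]|\leq C|t_2-t_1|$. Combining the two estimates gives Lipschitz continuity in time.

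The crux, and the reason the hypothesis on the \emph{third} derivatives of $g,\mu,\sigma$ is needed, is precisely this last step: a naive Lipschitz-in-space bound only produces $\E[|\bar X_{t_2}-x|]\leq C\sqrt{t_2-t_1}$ because of the Brownian increment in $\X{1}{}$, giving mere $\tfrac12$-Hölder regularity in time. Upgrading to genuine Lipschitz regularity requires the second-order (It\^o) expansion of $\diff{x_1}{u}$ in space, and hence the boundedness of $\partial_{x_1}^3 u$ supplied by Lemma~\ref{lemma:UthirdTimeDiff}. Once all four hypotheses are verified, Theorem~\ref{thm:stoppingB:strongSolution} applies verbatim.
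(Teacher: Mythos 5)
Your proof follows the same skeleton as the paper's and is sound in substance. The verification of hypotheses (a)--(c) is exactly the paper's: boundedness of $\mu$ via Remark \ref{remark:muBounded}, the lower bound $\diff{x_1}{u}\geq\delta>0$ via Lemma \ref{lemma:u1GreaterZero} after converting boundedness of $1/g'$ into $g'\geq\delta>0$, and spatial Lipschitz continuity from the bounded spatial derivatives of $\diff{x_1}{u}$. For the time regularity, your two-term decomposition matches the paper's four-term one: the $C\,|t_2-t_1|$ contribution obtained by taking expectations in the BSDE for $\Y{1}{}$ (with the boundedness of $\Zo{1}{}$ from Lemma \ref{lemma:Z1Bounded} as the crucial input) is precisely the paper's fourth summand, while the remaining spatial-increment term $\E\bigl[\diff{x_1}{u}(t_2,\bar X_{t_2})-\diff{x_1}{u}(t_2,x)\bigr]$ is what the paper splits into three pieces: a Gaussian-smoothing term in the $x_1$ direction plus two terms handled by mere spatial Lipschitz continuity, exploiting that $\X{2}{}$ and $\X{3}{}$ have bounded drift. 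Your single It\^o-formula treatment is more compact, and you correctly isolate why the third derivatives of $g,\mu,\sigma$ are needed: without a second-order expansion in the $x_1$ direction one only gets H\"older-$\tfrac12$ regularity in time.

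The one step that needs repair is the claim that $\phi=\diff{x_1}{u}(t_2,\cdot)$ is $C^2$. Lemma \ref{lemma:UthirdTimeDiff} yields three \emph{weak} derivatives of $u$, uniformly bounded; this makes $\phi$ of class $C^{1,1}$ (Lipschitz gradient) but not classically $C^2$, so the classical It\^o formula does not apply directly -- and since $\bar X$ is degenerate (noise only in the $x_1$ component), an It\^o--Krylov argument is not available off the shelf either. The fix is routine but must be stated: mollify, $\phi_n:=\phi*\rho_n$, apply It\^o to $\phi_n$, take expectations so the stochastic integral vanishes, and note that the resulting bound $C\,(t_2-t_1)$ depends only on $\Vert\nabla\phi_n\Vert_\infty\leq\Vert\nabla\phi\Vert_\infty$, $\Vert\bar M\Vert_\infty$ and $\Vert\partial_{x_1}^2\phi_n\Vert_\infty\leq\Vert\partial_{x_1}^2\phi\Vert_\infty$, hence is uniform in $n$; then let $n\to\infty$ using pointwise convergence and boundedness of $\phi_n$. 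This is exactly the difficulty the paper sidesteps by freezing $(\x{2},\x{3})$ and invoking a Gaussian-integral inequality for weakly differentiable functions (Lemma 4.3.11 of \cite{Fromm15}) instead of an It\^o expansion along the full flow.
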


\begin{proof}
Remember that the derivative $\diff{x_1}{u}$ of the decoupling field of FBSDE \eqref{fbsde:base} equals $\u{1}{}$ of the decoupling field of FBSDE \eqref{align:fbsde:secondOrder} by Lemma \ref{lemma:fbsdeSecondOrderBounded} and which, by Lemma \ref{lemma:u1GreaterZero}, is bounded from below by a $\delta > 0$. Hence, it only remains to show that $\diff{x_1}{u}$ which equals $\u{1}{}$ is Lipschitz continuous. Since we already know that the derivatives w.r.t.\ the space variables are bounded (by Lemma \ref{lemma:fbsdeSecondOrderBounded}) we only need to prove that $\u{1}{}$ is Lipschitz continuous in the time variable.

Consider FBSDE \eqref{align:fbsde:secondOrder} for a starting time $t \in [0,1)$ on the interval $[t,1]$ with initial condition $\big( \X{1}{t}, \X{2}{t}, \X{3}{t} \big) = ( \x{1}, \x{2}, \x{3}) = x \in \IR^3$. Let $s \in (t,1]$. Using the triangle inequality several times gives
\begin{align*}
\left\vert \u{1}{} (s,x) - \u{1}{} (t,x) \right\vert
%\leq & \left\vert \u{1}{} (s,x) - \E \left[ \u{1}{} \left(s, \X{1}{s}, \X{2}{s}, \X{3}{s} \right) \right] \right\vert \\
%&\qquad + \left\vert \E \left[ \u{1}{} \left(s, \X{1}{s}, \X{2}{s}, \X{3}{s} \right) \right] - \u{1}{} \left( t, \X{1}{t}, \X{2}{t}, \X{3}{t} \right) \right\vert \\
\leq & \left\vert \u{1}{} (s,x) - \E \left[ \u{1}{} \left(s, \X{1}{s}, \x{2}, \x{3} \right) \right] \right\vert \\
& \qquad + \left\vert \E \left[ \u{1}{} \left(s, \X{1}{s}, \x{2}, \x{3} \right) \right] - \E \left[ \u{1}{} \left(s, \X{1}{s}, \X{2}{s}, \x{3} \right) \right] \right\vert \\
& \qquad + \left\vert \E \left[ \u{1}{} \left(s, \X{1}{s}, \X{2}{s}, \x{3} \right) \right] - \E \left[ \u{1}{} \left(s, \X{1}{s}, \X{2}{s}, \X{3}{s} \right) \right] \right\vert \\
&\qquad + \left\vert \E \left[ \u{1}{} \left(s, \X{1}{s}, \X{2}{s}, \X{3}{s} \right) - \u{1}{} \left( t, \X{1}{t}, \X{2}{t}, \X{3}{t} \right) \right]  \right\vert.
\end{align*}
We take a closer look at every summand on the right hand side starting with the first one. By defining
\begin{equation*}
\phi (z) := \u{1}{} (s,\x{1}, \x{2}, \x{3} ) - \u{1}{} ( s, \x{1} + z, \x{2}, \x{3} )
\end{equation*}
we see that the first summand equals $\vert \E [ \phi (W_s - W_t) ] \vert$. Furthermore, $\phi(0) = 0$ and by Lemma \ref{lemma:UthirdTimeDiff}, $\phi$ is two times weakly differentiable with derivatives bounded by some constant $L_{\diff{x}{\u{1}{}}} < \infty$. Hence, the inequality $\left\vert \int_\IR \phi ( a \cdot z ) \frac{1}{\sqrt{2 \pi}} e^{-\frac{1}{2} z^2} \d z \right\vert \leq \frac{1}{2} a^2 \Vert \phi'' \Vert_\infty$ holds true (see e.g.\ Lemma 4.3.11 in \cite{Fromm15}). Therefore,
\begin{equation*}
\left\vert \u{1}{} (s,x) - \E \left[ \u{1}{} \left(s, \X{1}{s}, \x{2}, \x{3} \right) \right] \right\vert
= \left\vert \E \left[ \phi (W_s - W_t) \right] \right\vert
\leq \frac{(s-t)}{2} \cdot L_{\diff{x}{\u{1}{}}}.
\end{equation*}

For the second summand we use the Lipschitz constant of $\u{1}{}$ denoted by $L_{\u{1}{}}$ to get
\begin{align*}
\left\vert \E \left[ \u{1}{} \left(s, \X{1}{s}, \x{2}, \x{3} \right) - \u{1}{} \left(s, \X{1}{s}, \X{2}{s}, \x{3} \right) \right] \right\vert
\leq & L_{\u{1}{}} \E \left\vert \X{2}{s} - \x{2} \right\vert \\
= & L_{\u{1}{}} \E \left\vert \int_t^s \frac{(\Zo{0}{r})^2}{\sigma_r^2 } \d r \right\vert \\
\leq & L_{\u{1}{}} \Vert \u{1}{} \Vert_\infty ^2 \epsilon^{-2} (s-t)
\end{align*}
since $\vert \Zo{0}{} \vert \leq \Vert \u{1}{} \Vert_\infty < \infty$ by Theorem \ref{thm:U1isBounded}.

The third summand can be estimated similarly by
\begin{align*}
&\left\vert \E \left[ \u{1}{} \left(s, \X{1}{s}, \X{2}{s}, \x{3} \right) - \u{1}{} \left(s, \X{1}{s}, \X{2}{s}, \X{3}{s} \right) \right] \right\vert \\
&\qquad \qquad \qquad \qquad \qquad \qquad
\leq  L_{\u{1}{}} \E \left\vert \int_t^s \mu_r \frac{(\Zo{0}{r})^2}{\sigma_r^2} \d r \right\vert \\
&\qquad \qquad \qquad \qquad \qquad \qquad
\leq L_{\u{1}{}} \Big\Vert \frac{\mu}{\sigma^2} \Big\Vert_\infty \Vert \u{1}{} \Vert_\infty^2 (s-t).
\end{align*}

For the last summand we use the decoupling condition and $\Y{3}{\cdot} = -1$ to obtain
\begin{align*}
&\left\vert \E \left[ \u{1}{} \left(s, \X{1}{s}, \X{2}{s}, \X{3}{s} \right) - \u{1}{} \left( t, \X{1}{t}, \X{2}{t}, \X{3}{t} \right) \right]  \right\vert \\
& \qquad \leq \left\vert \E \left[ \Y{1}{s} - \Y{1}{t} \right] \right\vert \\
& \qquad = \left\vert \E \left[ \int_t^s \Y{1}{r} \frac{(\Zo{0}{r} )^2}{\sigma_r^2} \left( \mu_{a,r} - 2 \mu_r \frac{\sigma_{a,r}}{\sigma_r} + 2 \Y{2}{r} \frac{\sigma_{a,r}}{\sigma_r} \right) - \frac{2 \Zo{0}{r}}{\sigma_r^2} \left(\Y{2}{r} - \mu_r \right) \Zo{1}{r} \d r \right] \right\vert \\
& \qquad \leq \left[ \Vert \u{1}{} \Vert_\infty^3 \left( \left\Vert \frac{\diff{a}{\mu}}{\sigma^2} \right\Vert_\infty + 2 \left\Vert \frac{\mu}{\sigma^2} \right\Vert_\infty  \left\Vert \frac{\diff{a}{\sigma}}{\sigma} \right\Vert_\infty + \frac{2}{\epsilon^2} \Vert \u{2}{} \Vert_\infty \left\Vert \frac{\diff{a}{\sigma}}{\sigma} \right\Vert_\infty \right) \right. \\
& \left. \qquad \qquad \qquad \qquad \qquad + 2 \Vert \u{1}{} \Vert_\infty \left( \epsilon^{-2} \Vert \u{2}{} \Vert_\infty + \Big\Vert \frac{\mu}{\sigma^2} \Vert_\infty \right) \Vert \diff{x_1}{\u{1}{}} \Vert_\infty \right] (s-t)
\end{align*}
where we applied Theorem \ref{thm:U1isBounded} and Lemma \ref{lemma:Z1Bounded}. Thus, the last summand is Lipschitz continuous by Theorem \ref{thm:U1isBounded} and Lemma \ref{lemma:fbsdeSecondOrderBounded}, too.

Putting all estimates together we arrive at $\vert \u{1}{} (s,x) - \u{1}{} (r,x) \vert \leq L (s-t)$ for some finite constant $L$ which is independent of $s$ and $t$. Hence $\u{1}{}$ is Lipschitz continuous in the time variable.

\end{proof}

Observe that Proposition \ref{prop:conditionsStronSolution} and Theorem \ref{thm:stoppingB:strongSolution} imply Theorem \ref{thm:mainResultStrong}.

\section{Numerics}
\label{sec:numerics}

We now illustrate numerically an example of an embedding using the methodology developed. This is done by numerically approximating the solution of the FBSDE
\begin{align}
W_s =&  \int_0^s \frac{\sigma ( \X{2}{r}, Y_r + \X{3}{r} ) }{ Z_r } \d B_{\X{2}{r}} \nonumber \\
\X{2}{s} =& \int _0 ^s \frac{Z_r^2}{\sigma^2 ( \X{2}{r}, Y_r + \X{3}{r} ) } \d r \label{sys:fbsdeAndBCompact} \\
\X{3}{s} =& \int_0^s \mu (\X{2}{r}, Y_r + \X{3}{r}) \frac{Z_r^2}{\sigma^2 ( \X{2}{r}, Y_r + \X{3}{r} ) } \d r \nonumber \\
Y_s =& g( W_1 ) - \X{3}{1} - \int _s ^1 Z_r \d W_r. \nonumber
\end{align}
To the best of our knowledge no literature exists able to deal directly with approximations of \eqref{sys:fbsdeAndBCompact} and hence, inspired by known literature, we propose a numerical scheme whose rigorous study is left for future research. FBSDE \eqref{sys:fbsdeAndBCompact} is a fully coupled quadratic growth FBSDE which we deal with as follows:  from \cite{IRZ10} we inject the theoretical a priori hard bounds in the coefficients, reducing FBSDE \eqref{sys:fbsdeAndBCompact} to a uniformly Lipschitz fully-coupled one, then apply a decoupling technique based on Picard iterations \cite{BZ08} to reduce the problem to the iterative simulation of uniformly Lipschitz fully-decoupled FBSDE. The final approximation step is carried out using a classic explicit Euler scheme discretization \cite{BZ08} while the  approximation of the conditional expectations is done via projection over basis functions \cite{GLW05}. The final outcome is the approximation of the embedding stopping time and the verification that the stopped process does embed the target distribution.

From a mathematical point of view, the only step of the described numerical approximation that cannot be fully justified is the convergence of the Picard iteration step. The results of \cite{BZ08} do not apply if the diffusion coefficient $\sigma$ depends on $Z$.
We stress, however, that for some special cases the algorithm outlined below can be shown to converge, e.g.~in the homogeneous case (see Remark \ref{remark:homocase} below).

\subsection{The problem, its conditions and the hard bounds}

At first we show that FBSDE \eqref{sys:fbsdeAndBCompact} has a unique solution from which we can construct a strong solution of the \SEP.

\begin{proposition}
\label{prop:numericsSystem}
Let the assumptions of Theorem \ref{thm:stoppingB:strongSolution} or Proposition \ref{prop:conditionsStronSolution} be satisfied. Denote by $u$ the decoupling field of FBSDE \eqref{fbsde:base}. Let $B$ be an arbitrary Brownian motion and denote by $( \mathcal{F}^B ) = ( \mathcal{F}^B_s)_{s \in [0, \infty)}$ the augmented filtration generated by $B$. 
Then there exist unique square-integrable processes $(W,X^{(2)},X^{(3)},Y)$ solving the FBSDE \eqref{sys:fbsdeAndBCompact}. Moreover, $\tau := \X{2}{1}$ is an $(\cF^B_t)$-stopping time bounded as in \eqref{est:tau}, $W$ is a Brownian motion on $[0,1]$ and the pair $(\tau,Y_0)$ is a strong solution of the SEP.
\end{proposition}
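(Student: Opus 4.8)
The plan is to construct the solution of \eqref{sys:fbsdeAndBCompact} from the unique solution of the Lipschitz SDE \eqref{sys:systemTau} by inverting the time change, and then to transfer uniqueness back in the same way. Under either set of hypotheses, Proposition \ref{prop:conditionsStronSolution} ensures that the assumptions of Theorem \ref{thm:stoppingB:strongSolution} are met; in particular $\diff{x_1}{u}$ is Lipschitz continuous, bounded, and bounded away from zero, say $\diff{x_1}{u} \geq \delta > 0$ (Lemma \ref{lemma:u1GreaterZero}). Theorem \ref{thm:stoppingB:strongSolution} then provides a unique $(\cF^B_t)$-progressively measurable solution $(\gamma, \Gamma, \Delta, \Theta)$ of \eqref{sys:systemTau} driven by $B$, together with the $(\cF^B_t)$-stopping time $\tau = \inf\{r \geq 0 \mid \gamma(r) = 1\}$ satisfying \eqref{est:tau}. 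Since $\gamma'(r) = \sigma^2 / (\diff{x_1}{u})^2$ lies between $\epsilon^2 / \Vert \diff{x_1}{u} \Vert_\infty^2$ and $\Vert \sigma \Vert_\infty^2 / \delta^2$, the map $\gamma$ is a strictly increasing, Lipschitz bijection of $[0,\tau]$ onto $[0,1]$ with a continuous inverse.

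First I would carry out the inverse time change. Setting $\X{2}{s} := \gamma^{-1}(s)$, $W_s := \Gamma_{\gamma^{-1}(s)}$, $\X{3}{s} := \Delta_{\gamma^{-1}(s)}$, $Y_s := \Theta_{\gamma^{-1}(s)}$ and $Z_s := \diff{x_1}{u}(s, W_s, \X{2}{s}, \X{3}{s})$ for $s \in [0,1]$, the time-change formula (Proposition 1.4, Chapter V, \cite{RY13}) turns each line of \eqref{sys:systemTau} into the corresponding line of \eqref{sys:fbsdeAndBCompact}; this is precisely the computation of Lemma \ref{lemma:localLipschitzSystem} read backwards, with the terminal identity $Y_1 + \X{3}{1} = A_\tau = g(W_1)$ supplied by Theorem \ref{thm:stoppingB:strongSolution} to produce the backward equation. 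This yields existence, and the bound on $\tau = \X{2}{1} = \gamma^{-1}(1)$ is \eqref{bound gamma inv}. To see that $W$ is a Brownian motion on $[0,1]$ I would proceed as in Theorem \ref{thm:weakSolution}: from the first line of \eqref{sys:fbsdeAndBCompact}, the identity $\langle B_{\X{2}{}} \rangle_r = \X{2}{r}$ and $\d \X{2}{r} = (Z_r^2/\sigma_r^2)\, \d r$, a direct computation gives $\langle W \rangle_s = \int_0^s (\sigma_r^2/Z_r^2)\, \d \X{2}{r} = s$, so Lévy's characterisation applies. Finally, the strong-solution property, namely that $A := \Delta + \Theta$ solves \eqref{sde:base} and $A_\tau \sim \nu$, is exactly the conclusion of Theorem \ref{thm:stoppingB:strongSolution}, whence $(\tau, Y_0)$ is a strong solution of the \SEP.

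For uniqueness I would run the correspondence forwards. Let $(W, \X{2}{}, \X{3}{}, Y)$ be any square-integrable solution of \eqref{sys:fbsdeAndBCompact} with backward integrand $Z$. The quadratic-variation computation above shows $\langle W \rangle_s = s$, so $W$ is a Brownian motion and $(W, \X{2}{}, \X{3}{}, Y, Z)$ solves the FBSDE \eqref{fbsde:base} with initial value $0$ and driving Brownian motion $W$. By Theorem \ref{thm:U1isBounded} together with Lemma \ref{lemma:ZequalsU1} (whose continuity requirement on $\diff{x_1}{u}$ holds under the present hypotheses), this forces $Z_s = \diff{x_1}{u}(s, W_s, \X{2}{s}, \X{3}{s}) \geq \delta > 0$ along the solution. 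Hence $\X{2}{}$ is strictly increasing with continuous inverse $\gamma := (\X{2}{})^{-1}$, and setting $\Gamma_r := W_{\gamma(r)}$, $\Delta_r := \X{3}{\gamma(r)}$, $\Theta_r := Y_{\gamma(r)}$ and applying the time-change formula shows that $(\gamma, \Gamma, \Delta, \Theta)$ solves \eqref{sys:systemTau}. As the latter admits a unique solution, and the time change is a bijection, the solution $(W, \X{2}{}, \X{3}{}, Y)$ of \eqref{sys:fbsdeAndBCompact} is unique.

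The main obstacle I anticipate is the uniqueness step: \eqref{sys:fbsdeAndBCompact} is a fully coupled FBSDE of quadratic growth, for which no direct well-posedness theory is available, so uniqueness must be imported from the Lipschitz system \eqref{sys:systemTau} through the random time change. The decisive point is the identification $Z_s = \diff{x_1}{u}(s, W_s, \X{2}{s}, \X{3}{s})$ for an arbitrary solution, as this is what guarantees that $\X{2}{}$ is strictly increasing and makes the passage to \eqref{sys:systemTau} reversible; the remaining work is bookkeeping with the time-change formula. Some care is also needed to verify that the reconstructed processes retain the required square-integrability and measurability, but this follows from the uniform boundedness of $Z$ and of the coefficients established in Theorem \ref{thm:U1isBounded}.
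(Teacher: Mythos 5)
Your proposal is correct and, in its existence half, coincides with the paper's own proof: the paper likewise constructs the solution of \eqref{sys:fbsdeAndBCompact} by inverting the time change of the unique solution $(\gamma,\Gamma,\Delta,\Theta)$ of \eqref{sys:systemTau}, sets $Z_s = \diff{x_1}{u}\big(s,\Gamma_{\gamma^{-1}(s)},\gamma^{-1}(s),\Delta_{\gamma^{-1}(s)}\big)$, applies the time-change formula line by line, and reads off the stopping-time bound and the embedding property from Theorem \ref{thm:stoppingB:strongSolution}. The difference lies in how uniqueness is closed. The paper stops earlier: after showing, exactly as you do, that $\qvar{W}{t}=t$ for an arbitrary square-integrable solution, so that $(W,\X{2}{},\X{3}{},Y,Z)$ solves FBSDE \eqref{fbsde:base} with initial value $0$, it simply invokes Theorem \ref{GLObalexistM} and Lemma \ref{UNIqXYZM}. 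You go one step further: using Lemma \ref{lemma:ZequalsU1} and Lemma \ref{lemma:u1GreaterZero} to identify $Z$ with $\diff{x_1}{u}\geq\delta>0$ along the arbitrary solution, you invert $\X{2}{}$ and transport the solution back to the Lipschitz system \eqref{sys:systemTau} driven by the \emph{given} Brownian motion $B$, and conclude by pathwise uniqueness of that Lipschitz SDE. This extra round trip is a genuine gain in precision: Lemma \ref{UNIqXYZM} gives uniqueness of the solution of \eqref{fbsde:base} relative to its driving Brownian motion $W$, but in \eqref{sys:fbsdeAndBCompact} the process $W$ is itself part of the unknown and could a priori differ between two solutions, so the paper's final citation only pins down each solution relative to its own $W$; your detour through \eqref{sys:systemTau} anchors everything to $B$ and delivers the pathwise uniqueness the proposition actually asserts. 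Two small caveats, shared with the paper rather than specific to you: the identification $Z=\diff{x_1}{u}$ for an \emph{arbitrary} solution still implicitly uses Lemma \ref{UNIqXYZM} (to match it with the decoupling-field solution given $W$), so your argument supplements rather than replaces that tool; and in both treatments uniqueness is really established in the class of solutions to which Lemma \ref{UNIqXYZM} applies (in particular with bounded $Z$), a restriction left implicit in the statement.
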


\begin{proof}
Remember that by Theorem \ref{thm:stoppingB:strongSolution} the SDE \eqref{sys:systemTau} has a unique solution $(\gamma, \Gamma, \Delta, \Theta)$. We introduce the time change $\gamma^{-1}(t) = \inf \{ r \geq 0 : \gamma(r) \geq t \} $ for $ t \in [0,1]$. Observe that $ \gamma^{-1}$ has the dynamics
\begin{equation*}
\gamma^{-1}(t) = \int_0^t \frac{\left( \diff{x_1}{u} ( s, \Gamma_{\gamma^{-1}(s)}, \gamma^{-1}(s), \Delta_{\gamma^{-1}(s)}) \right)^2}{\sigma^2 ( \gamma^{-1}(s), \Theta_{\gamma^{-1}(s)} + \Delta_{\gamma^{-1}(s)} ) } \d s.
\end{equation*}
By setting $Z_s := \diff{x_1}{u} ( s, \Gamma_{\gamma^{-1}(s)}, \gamma^{-1}(s), \Delta_{\gamma^{-1}(s)})$ for $s \in [0,1]$, replacing the dynamics of $\gamma$ by the dynamics of $\gamma^{-1}$ and applying the time change $\gamma^{-1}$ to all other processes, we can rewrite the system \eqref{sys:systemTau} as
\begin{align*}
\gamma^{-1}(t) &= \int_0^t \frac{\left( Z_s \right)^2}{\sigma^2 ( \gamma^{-1}(s), \Theta_{\gamma^{-1}(s)} + \Delta_{\gamma^{-1}(s)} ) } \d s \\
\Gamma_{\gamma^{-1}(t)} &= \int_0^t \frac{\sigma ( \gamma^{-1}(s), \Theta_{\gamma^{-1}(s)} + \Delta_{\gamma^{-1}(s)} ) }{ Z_s } \d B_{\gamma^{-1}(s)}  \\
\Delta_{\gamma^{-1}(t)} &= \int_0^t \mu ( \gamma^{-1}(s), \Theta_{\gamma^{-1}(s)} + \Delta_{\gamma^{-1}(s)} ) \frac{\left( Z_s \right)^2}{\sigma^2 ( \gamma^{-1}(s), \Theta_{\gamma^{-1}(s)} + \Delta_{\gamma^{-1}(s)} ) } \d s \\
\Theta_{\gamma^{-1}(t)} &= Y_0 + \int_0^t Z_s \d \Gamma_{\gamma^{-1}(s)}
\end{align*}
for all $t \in [0,1]$. Here it is straightforward to see that with $\gamma^{-1}(t) = \X{2}{t}$, $\Gamma_{\gamma^{-1}(t)} = W_t$, $\Delta_{\gamma^{-1}(t)} = \X{3}{t}$ and $\Theta_{\gamma^{-1}(t)} = Y_t$ we exactly have the system \eqref{sys:fbsdeAndBCompact}. Thus the system \eqref{sys:fbsdeAndBCompact} has a solution $(W, \X{2}{}, \X{3}{}, Y, Z)$ which fulfills that $\tau := \X{2}{1} = \gamma^{-1}(1) = \inf \{ r \geq 0 \vert \gamma(r) = 1 \}$ is a stopping time with regard to $(\cF^B_t)$ bounded as in \eqref{est:tau} and that $A_\tau \sim \nu$.

It remains to show the uniqueness of this solution. Now take an arbitrary square integrable solution $(W, \X{2}{}, \X{3}{}, Y, Z)$ of \eqref{sys:fbsdeAndBCompact}. Define the time change 
\begin{equation*}
\bar{\gamma} (t) := \left\{ \begin{array}{ll}
\inf \{ s \geq 0 : \X{2}{s} \geq t \}, & t \leq \X{2}{1} \\
1, & t > \X{2}{1}
\end{array} \right.
\end{equation*}
and observe that by
\begin{equation*}
\qvar{W}{t} = \int_0^{\X{2}{t}} \frac{ \sigma^2 \left( r, Y_{\bar{\gamma}(r)} + \X{3}{\bar{\gamma}(r)} \right) }{ Z_{\bar{\gamma} (r) }^2 } \d r = \int_0^t \frac{ \sigma^2 \left( \X{2}{r}, Y_{r} + \X{3}{r} \right) }{ Z_{r}^2 } \d \X{2}{r} = \int_0^t 1 \d r = t
\end{equation*}
$W$ is a Brownian motion on $[0,1]$. Thus the processes $(W, \X{2}{}, \X{3}{}, Y, Z)$ solve FBSDE \eqref{fbsde:base} for the initial value $0$. Due to Theorem \ref{GLObalexistM} and Lemma \ref{UNIqXYZM} this solution of FBSDE \eqref{fbsde:base} is unique.
\end{proof}

\begin{remarks}
\label{remark:simulationWeakSimpler}
If one is only interested in a weak solution, then only FBSDE \eqref{fbsde:base} needs to be solved, where $W$ is given, and the Brownian motion $B$ can be calculated afterwards, as described in Theorem \ref{thm:weakSolution}. Aside from simplifying the system that needs to be simulated, this also has the advantage of being valid for more general coefficients $\mu$ and $\sigma$ (compare the assumptions of Theorem \ref{thm:mainResultWeak} and Theorem \ref{thm:mainResultStrong}).
\end{remarks}

By the combination of Lemma \ref{lemma:u1GreaterZero}, Lemma \ref{lemma:ZequalsU1} and Theorem \ref{thm:U1isBounded} we have for $Z$ the $\lambda \times \IP$ a.s.\ bounds $0 < \widecheck{Z} \leq Z \leq \widehat{Z} < \infty$, which are
\begin{align*}
\widehat{Z} =& \left( \frac{1}{\Vert g' \Vert_\infty ^2} + 2 \min \left\{0, \inf_{ (\theta,x) \in \IR_+ \times \IR } \left( \frac{ \sigma \cdot \diff{a}{\mu} - 2 \diff{a}{\sigma} \cdot \mu }{\sigma^3} \right) (\theta,x) \right\} \right)^{-\frac{1}{2}} \quad \text{and} \\
\widecheck{Z} =& \left( \left\Vert \frac{1}{g'} \right\Vert_\infty + \widecheck{Z} \left( \left\Vert \frac{\diff{a}{\mu}}{\sigma^2} \right\Vert_\infty + 2 \left\Vert \frac{\mu}{\sigma^2} \right\Vert_\infty \left\Vert \frac{\diff{a}{\sigma}}{\sigma} \right\Vert_ \infty + \frac{2}{\epsilon^2} \Vert \diff{x_2}{u} \Vert_\infty \left\Vert \frac{\diff{a}{\sigma}}{\sigma} \right\Vert_\infty \right) \right)^{-1}
\end{align*}
with
\begin{align*}
\left\Vert \diff{x_2}{u} \right\Vert_\infty
& \leq \exp \left[ \widehat{Z}^2 \left( \left\Vert \frac{\diff{a}{\mu}}{\sigma^2} \right\Vert_\infty + 2 \left( \left\Vert \frac{\diff{a}{\sigma}}{\sigma} \right\Vert_\infty \left\Vert \frac{\mu}{\sigma^2} \right\Vert_\infty + \frac{1}{\epsilon^2} \left\Vert \frac{\diff{t}{\sigma}}{\sigma} \right\Vert_\infty \right) \right) \right] \\
& \hspace*{8cm} \cdot \widehat{Z}^2 \left( 2 \left\Vert \frac{\diff{t}{\sigma}}{\sigma} \right\Vert_\infty \left\Vert \frac{\mu}{\sigma^2} \right\Vert_\infty + \left\Vert \frac{\diff{t}{\mu}}{\sigma^2} \right\Vert_\infty \right).
\end{align*}
Therefore, we have that
\begin{equation*}
\frac{\widecheck{Z}^2}{\Vert \sigma \Vert_\infty^2} \leq \frac{Z^2_s}{\sigma^2 (\X{2}{s}, Y_s + \X{3}{s} )} \leq \frac{\widehat{Z}^2}{\epsilon^2},\qquad \lambda \times \IP \text{ a.s.}
\end{equation*}
and in particular
\begin{equation}
\label{eq:estimateStoppingTime}
\frac{\widecheck{Z}^2}{\Vert \sigma \Vert_\infty^2} \leq \tau = \X{2}{1} \leq \frac{\widehat{Z}^2}{\epsilon^2} \qquad \text{a.s.}
\end{equation}

\begin{example}[Embedding a Normal distribution into a Brownian motion with drift]
For $\mu \equiv m \in \IR$, $\sigma \equiv 1$ and $\nu = \mathcal{N} (0, \alpha^2 )$ for $\alpha > 0$ we know that $\tau = \alpha^2$ and $A_0 = - m \cdot \alpha^2$ solves the SEP. In this case we have that $g(x) = \alpha x$ and the above bounds for $Z$ become the explicit values $\alpha \leq Z \leq \alpha$ and the system \eqref{sys:fbsdeAndBCompact} simplifies to
\begin{align*}
W_s =&  \int_0^s \frac{1 }{ \alpha } \d B_{\X{2}{r}}, 
\quad
\X{2}{s} = \int _0 ^s \alpha^2 dr, 
\quad
\X{3}{s} = \int _0 ^s m \cdot \alpha^2 dr,
\\
Y_s =& \alpha W_1 - \X{3}{1} - \left( B_{\X{2}{1}} - B_{\X{2}{s}} \right)
\end{align*}
giving that $\tau = \X{2}{1} = \alpha^2$ a.s.\ which equals the above mentioned stopping time. We immediately find the correct value for $A_0$ since 
\begin{equation*}
A_0 = Y_0 = \E \left[ \left. Y_1 \right\vert \mathcal{F}_0 \right] = \E \left[ \left. \alpha W_1 - \int_0^1 m \alpha^2 \d r - B_{\X{2}{1}} + B_{\X{2}{0}} \right\vert \mathcal{F}_0 \right] = - m \alpha^2.
\end{equation*}
\end{example}

\begin{example}
\label{example:sigmoid}
Again let $\nu = \mathcal{N} (0, \alpha^2 )$ for $\alpha > 0$. Furthermore, set
\begin{equation*}
\sigma(t,a) = p^\sigma_1 + \frac{ p^\sigma_2}{1 + e^{-t}} + \frac{ p^\sigma_3}{1 + e^{-a}} \qquad \text{and} \qquad \mu(t,a) = p^\mu_1 + \frac{ p^\mu_2}{1 + e^{-t}} + \frac{ p^\mu_3}{1 + e^{-a}}
\end{equation*}
for the vectors $p^\sigma, p^\mu \in \IR^3$ containing parameters such that
\begin{equation*}
\epsilon := p^\sigma_1 + \min(0, p^\sigma_2) + \min(0,p^\sigma_3) > 0,
\end{equation*}
\begin{equation*}
2p^\sigma_2 p^\sigma_3 p^\mu_1 - p^\sigma_1 p^\mu_2 + \min(0, p^\sigma_2 p^\sigma_3 p^\mu_2 ) + \min(0,2 p^\sigma_2 p^\sigma_3 p^\mu_3 - (p^\sigma_3)^2 p^\mu_2) > 0
\end{equation*}
and
\begin{equation*}
\frac{1}{\alpha^2} + \frac{ p^\sigma_1 p^\mu_3 - 2 p^\sigma_3 p^\mu_1 + \min(0, p^\sigma_2 p^\mu_3 - 2 p^\sigma_3 p^\mu_2) - \max(0, p^\sigma_3 p^\mu_3)}{ 2\epsilon^3} > 0.
\end{equation*}
Then observe that all conditions of Proposition \ref{prop:conditionsStronSolution} and therefore also of Proposition \ref{prop:numericsSystem} are fulfilled,
\begin{equation*}
\widehat{Z} \leq \left( \frac{1}{\alpha^2} + \frac{ p^\sigma_1 p^\mu_3 - 2 p^\sigma_3 p^\mu_1 + \min(0, p^\sigma_2 p^\mu_3 - 2 p^\sigma_3 p^\mu_2) - \max(0, p^\sigma_3 p^\mu_3)}{ 2 \epsilon^3} \right)^{-\frac{1}{2}} < \infty
\end{equation*}
and also $\widecheck{Z}$ can be directly obtained since
\begin{align*}
& \Vert \sigma \Vert_\infty = p^\sigma_1 + \max(0, p^\sigma_2) + \max(0,p^\sigma_3), \\
& \Vert \mu \Vert_\infty = \max \left( p^\mu_1 + \max(0,p^\mu_2) + \max(0, p^\mu_3), -p^\mu_1 - \min(0,p^\mu_2) - \min(0, p^\mu_3) \right), \\
& \Vert \diff{a}{\sigma} \Vert_\infty = \vert p^\sigma_3 \vert, 
\quad
\Vert \diff{t}{\sigma} \Vert_\infty = \vert p^\sigma_2 \vert,
\quad
\Vert \diff{a}{\mu} \Vert_\infty = \vert p^\mu_3 \vert, 
\quad \Vert \diff{t}{\mu} \Vert_\infty = \vert p^\mu_2 \vert.
\end{align*}
\end{example}

\subsection{Iterative procedure}

To numerically approximate \eqref{sys:fbsdeAndBCompact} we first embed the hard bounds for $Z$, as found above, in the system, then create a Picard-type approximative sequence converging to \eqref{sys:fbsdeAndBCompact} and numerically approximate the terms of said sequence. Since we have a coupled system of FBSDEs with a truncated quadratic growth component, we combine \cite{IRZ10} and \cites{BZ08}.

Since $\X{2}{}$ is increasing and
\begin{equation*}
\X{2}{1} \leq \epsilon^{-2} \left( \frac{1}{\Vert g' \Vert_\infty ^2} + 2 \min \left\{0, \inf_{ (\theta,x) \in \IR_+ \times \IR } \left( \frac{ \sigma \cdot \diff{a}{\mu} - 2 \diff{a}{\sigma} \cdot \mu }{\sigma^3} \right) (\theta,x) \right\} \right)^{-1}
\end{equation*}
a.s.\ as stated in Equation \eqref{eq:estimateStoppingTime}, we only need a trajectory of $B$ untill this point. 

Furthermore, choose any starting value for $Z$ between the lower and upper bounds $\widecheck Z,\widehat Z$ respectively. Here we set the starting value $\Zo{0}{} = \Vert g' \Vert_\infty$ since $\widecheck{Z} \leq \Vert \frac{1}{g'} \Vert_\infty^{-1} \leq \Vert g' \Vert_\infty \leq \widehat{Z}$. Moreover, we define a truncation operator to incorporate the hard bounds for $Z$, namely, let $T:\IR\to \IR$ such that given $\widecheck Z,\widehat Z$, we define $T(z):= \min(\max(z,\widecheck Z), \widehat Z)$. The map $T$ is uniformly Lipschitz.

For the other starting conditions we choose $\p{Y}{0}{}=\Xz{0}{}=\Xd{0}{}=0$. Then we do the following iterations for $k \in \IN_0$:
\begin{align*}
\Xz{k+1}{s} =& \int_0^s \frac{ \left( T\big(\Zo{k}{r}\big) \right)^2}{\sigma^2 \left( \Xz{k+1}{r}, \p{Y}{k}{r} + \Xd{k+1}{r} \right) } \d r 
\\
\Xd{k+1}{s} =& \int_0^s \mu \left( \Xz{k+1}{r}, \p{Y}{k}{r} + \Xd{k+1}{r} \right) \frac{ \left( T\big(\Zo{k}{r}\big) \right)^2}{\sigma^2 \left( \Xz{k+1}{r}, \p{Y}{k}{r} + \Xd{k+1}{r} \right) } \d r 
\\
\p{W}{k+1}{s} =& \int_0^s \frac{\sigma \left( \Xz{k+1}{r}, \p{Y}{k}{r} + \Xd{k+1}{r} \right) }{ T\big(\Zo{k}{r}\big) } \d B_{\Xz{k+1}{r}} \\
\p{Y}{k+1}{s} =& g ( \p{W}{k+1}{1} ) - \Xd{k+1}{1} - \int_s^1 \sigma \left( \Xz{k+1}{r}, \p{Y}{k}{r} + \Xd{k+1}{r} \right) \d B_{\Xz{k+1}{r}}.
\end{align*}
Under the conditions imposed on $\mu,\sigma$ (Lipschitz and bounded) and $T$, all the coefficient maps of the truncated FBSDE system are Lipschitz continuous. It is currently not clear how to show that the iterative system converges to the solution of \eqref{sys:fbsdeAndBCompact} where one could possibly use a result similar to \cite{BZ08}*{Theorem 2.1}; this difficulty stems from the fact that the \cite{BZ08} methodology does not allow for either random drift or diffusion coefficients or $\sigma$ depending on $Z$. Note that in the limit ($k \to \infty$) the truncation does not affect the system as $\widecheck Z \leq Z\leq \widehat Z$ .

\subsection{Numerical procedure (time discretization)}

We introduce the time discretization $\pi = \{ 0 = t_0, \ldots , t_n = 1 \}$ for $n \in \IN$ and define $|\pi|:=\max_{i=0,\cdots,n} |t_{i+1}-t_i|$ as the mesh's modulus. The numerical approximation of the iterative system, for each $k\in \IN$ follows \cite{BT04} (or \cite{BZ08}). We apply an explicit Euler type approximation to the integrals and let throughout $t_i \in \pi \setminus \{ t_0 \}$. At first
\begin{align*}
\Xz{k+1}{t_0} =& 0,\qquad \Xd{k+1}{t_0} = 0 
\\
\Xz{k+1}{t_{i+1}} =& \Xz{k+1}{t_i} + (t_{i+1} - t_i ) \left( \frac{ T\big( \Zo{k}{t_i} \big) }{ \sigma ( \Xz{k+1}{t_i}, \p{Y}{k}{t_i} + \Xd{k+1}{t_i} )} \right)^2 \\
\Xd{k+1}{t_{i+1}} =& \Xd{k+1}{t_i} + (t_{i+1} - t_i ) \frac{ \mu \left( \Xz{k+1}{t_i}, \p{Y}{k}{t_i} + \Xd{k+1}{t_i} \right) \left( T\big( \Zo{k}{t_i} \big) \right)^2 }{ \sigma^2 \left( \Xz{k+1}{t_i}, \p{Y}{k}{t_i} + \Xd{k+1}{t_i} \right)},
\end{align*}
then
\begin{align*}
\p{W}{k+1}{t_0} = 0,
\quad 
\p{W}{k+1}{t_{i+1}} =& \p{W}{k+1}{t_i} + \frac{ \sigma ( \Xz{k+1}{t_i}, \p{Y}{k}{t_i} + \Xd{k+1}{t_i} )}{ T\big( \Zo{k}{t_i} \big) } \left( B_{\Xz{k+1}{t_{i+1}}} - B_{\Xz{k+1}{t_i}} \right)
\end{align*}
and
\begin{align*}
\p{Y}{k+1}{t_n} =& g\left( \p{W}{k+1}{1} \right) - \Xd{k+1}{1} \\
\p{Y}{k+1}{t_{i-1}} =& \E \left[ \left. \p{Y}{k+1}{t_{i}} \right\vert \mathcal{F}_{t_{i-1}} \right] \\ %- (t_{i} - t_{i-1} ) \frac{ \mu \left( \Xz{k+1}{t_{i-1}} , \p{Y}{k}{t_{i-1}} + \Xd{k+1}{t_{i-1}} \right) \left( \Zo{k+1}{t_{i-1}} \right)^2 }{\sigma^2 \left( \Xz{k+1}{t_{i-1}}, \p{Y}{k}{t_{i-1}} + \Xd{k+1}{t_{i-1}} \right)} \\
\Zo{k+1}{t_{i-1}} =& \frac{1}{t_{i} - t_{i-1}} \E \left[ \left. \left( \p{Y}{k+1}{t_{i}} - \E \left[ \left. \p{Y}{k+1}{t_{i}} \right\vert \mathcal{F}_{t_{i-1}} \right] \right) \left( W_{t_{i}} - W_{t_{i-1}} \right) \right\vert \mathcal{F}_{t_{i-1}} \right].
\end{align*}
The time discretization expression for $\Zo{k+1}{t_{i-1}}$ is somewhat non-standard when compared with the \cite{BT04} scheme. The inner term with the conditional expectation of $\p{Y}{k+1}{t_{i}}$ is a variance reduction trick which has been discussed in several places, e.g. \cite{LRS15}*{Section 5.4.2}; independently, the scheme's convergence (for fixed $k$ as $h\searrow 0$) follows via \cite{BT04}*{Theorem 3.1} yielding a convergence rate of order $h^{1/2}$ (the formulation associated to \cite{BZ08}*{Theorem 2.2} would deliver the same convergence). In the calculation of $Z$ we use that $\int _s ^1 \sigma ( \X{2}{r}, Y_r + \X{3}{r} ) \d B_{\X{2}{r}} = \int_s^1 Z_r \d W_r$ for all $s \in [0,1]$ and hence for small $h > 0$
\begin{align*}
Z_t
\approx& \frac{1}{h} \E \left[ \left. \int_t^{t+h} Z_r \d r \right\vert \mathcal{F}_t \right] \\
=& \frac{1}{h} \E \left[ \left. \left( Y_{t+h} - Y_t \right) \left( W_{t+h} - W_t \right) - \int_t^{t+h} \left( Y_r - Y_t + \left( W_r - W_t \right) Z_r \right) \d W_r  \right\vert \mathcal{F}_t \right] \\
=& \frac{1}{h} \E \left[ \left. Y_{t+h} \left( W_{t+h} - W_t \right) \right\vert \mathcal{F}_t \right] \\
=& \frac{1}{h} \E \left[ \left. \left( Y_{t+h} - \E \left[ \left. Y_{t+h} \right\vert \mathcal{F}_t \right] \right) \left(W_{t+h} - W_t \right) \right\vert \mathcal{F}_t \right].
\end{align*}
For the calculation of $W$ we implicitly assume that the value of $B$ is known for every $\Xz{k}{t_i}$ for all $k \geq 0$ and $t_i \in \pi$. This problem is more involved if the trajectory of $B$ is to be calculated at the beginning of the simulation. However, it can be eliminated by calculating the trajectory of $B$ just in time for the points needed by the method of Brownian bridge and storing all thereby obtained points. It is well known that the distribution of a Brownian bridge $B$ at time $t_1$ under the condition of the values of $B$ at the times $t_0 < t_1$ and $t_2 >t_1$ is
\begin{equation*}
B_{t_1}|B_{t_0},B_{t_2} \sim \mathcal{N} \left( B_{t_0} \cdot \frac{t_2-t_1}{t_2-t_0} + B_{t_2} \cdot \frac{t_1-t_0}{t_2-t_0} \ , \ \ \frac{(t_2-t_1)(t_1-t_0)}{t_2-t_0} \right),
\end{equation*}
see e.g.\ \cite{karatzas1991brownian}. Thus the simulation of $B$ at the exact points of time is straightforward as well. Lastly, the conditional expectations are computed via Least-Squares regression functions as shown in \cite{GLW05}; we project over $3$-dimensional polynomials up to degree $2$. 

After finishing the simulation of the FBSDE we can use the simulated trajectory of $B$ to simulate our process $A$ and apply the stopping time $\tau$ to see if $A_\tau$ has the desired distribution.

\begin{remarks}\label{remark:homocase}
For time homogeneous coefficients $\mu$ and $\sigma$ the FBSDE \eqref{fbsde:base} simplifies to the  decoupled FBSDE
\begin{align*}
\X{2}{s} &= \int_0^s \frac{Z_r^2}{\sigma^2 ( \bar{Y}_r )} \d r , \qquad 
%\\
\bar{Y}_s 
%&
= g (W_1) - \int_s^1 \mu ( \bar{Y}_r ) \frac{Z_r^2}{\sigma^2 (\bar{Y}_r ) } \d r - \int_s^1 Z_r \d W_r.
\end{align*}
For this decoupled system one can use the same trick as above and inject in the BSDE the hard bounds on $Z$. Once truncated and using the condition on $\mu,\sigma$, the driver of the BSDE, say $f_R(y,z)=T^2(z) \mu(y)/\sigma^2(y)$ using the notation from before, is a standard uniformly Lipschitz driver in $y,z$ for which it is known (\cite{BT04}, \cite{BZ08}, \cite{GLW05}) that the Euler explicit scheme converges to the true solution. For weak solutions (see Remark \ref{remark:simulationWeakSimpler}) of the \SEP \ this explicit scheme is equivalent to the scheme we propose here. Hence, we have a special case where the convergence of our scheme is known.
\end{remarks}

\subsection{Numerical testing for Example \ref{example:sigmoid}}

For the parameters $\alpha=1$, $p^\sigma = (2,0.5,2)$ and $p^\mu = (1.5, -2.5, 0.5 )$ such that $\nu = \mathcal{N}(0, 1)$, 
\begin{equation*}
\sigma(t,a) = 2 + \frac{ 0.5 }{1 + e^{-t}} + \frac{ 2}{1 + e^{-a}} \qquad \text{and} \qquad \mu(t,a) = 1.5 + \frac{ -2.5 }{1 + e^{-t}} + \frac{ 0.5 }{1 + e^{-a}}
\end{equation*}
we get $\epsilon = 2$, $\Vert \sigma \Vert_\infty = 4.5$, $\widehat{Z} \leq \sqrt{\frac{8}{5}}$ and $\widecheck{Z} \geq 0.111$ giving $6 \times10^{-4} \leq \tau \leq 0.4$. A simulation with $10^5$ paths, $20$ time steps and $50$ iterations yielded values for $\tau$ in the interval $[0.061;0.161]$ and the starting value $Y_0 = -0.042$.

\begin{figure}[!hbt]
	\centering
	\subfigure
	{
		\includegraphics[width=8.0cm]{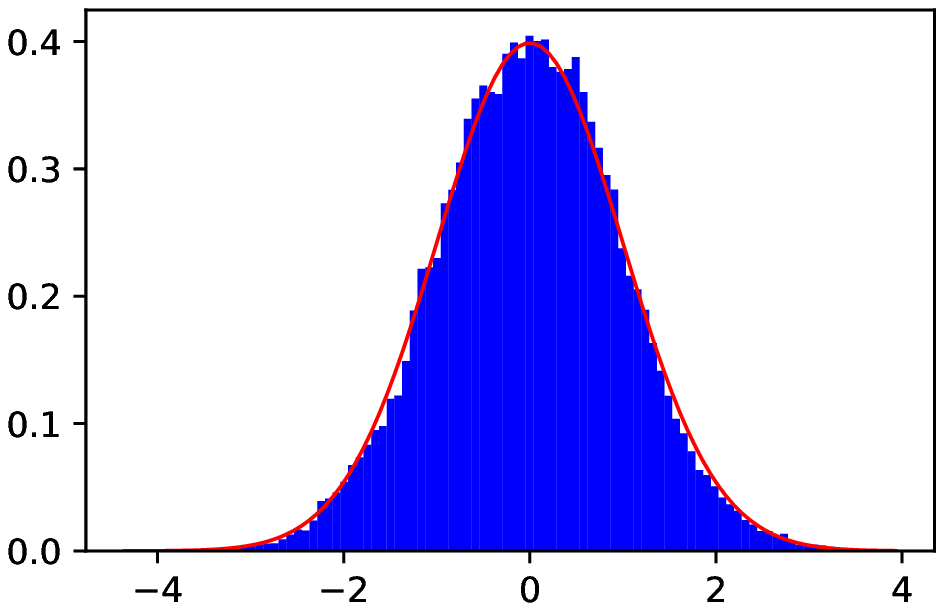}
	}
	\hspace{-1.05cm}
	\subfigure
	{
		\includegraphics[width=8.0cm]{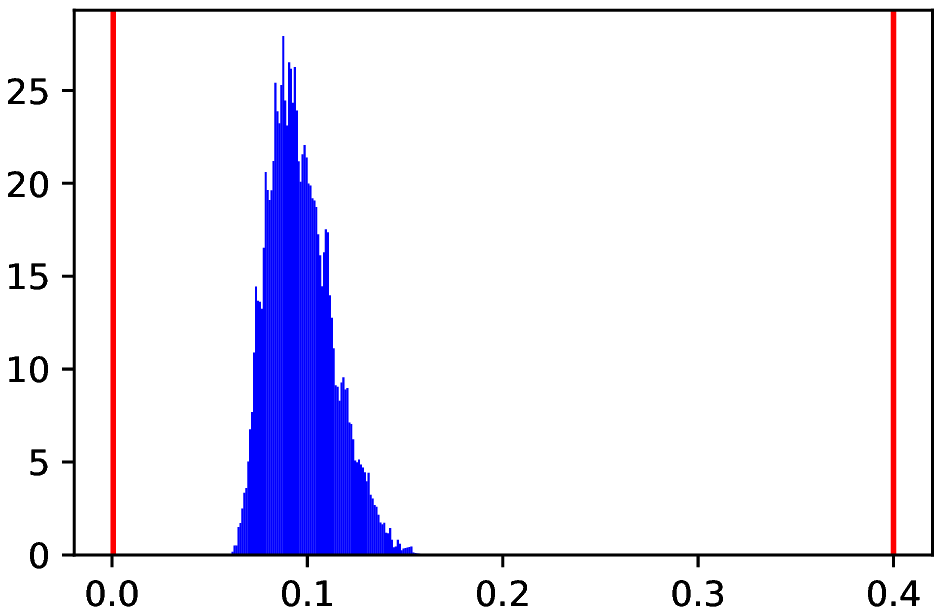}
	}
	\caption{On the left, Histogram of $10^5$ samples of $A_\tau$ against the density of the $\mathcal{N}(0,\alpha)$; on the right, the Histogram of the corresponding samples of $\tau$ and at $x=0.0055$ and $x=0.4$ the a priori hard bounds for the stopping time.}
	\label{Fig:Mtau-Normalpdf-tau}
\end{figure}

We simulated $A_\tau$ with initial condition $A_0=Y_0 = -0.042$. In Figure \ref{Fig:Mtau-Normalpdf-tau} one finds the histogram of the simulated values of the $A_\tau$ (left) and the stopping time $\tau$ (right). The histogram of $A_\tau$ indicates that our algorithm generates the sought normal distribution (with the appropriate characteristics). Also, D'Agostino and Pearson's \cites{D71,DP73} test for normality, applied to the simulated data $A_\tau$, yielded a $p$-value of $0.37$. Given such a high $p$-value we do not reject the hypothesis of normality at any reasonable significance level.

\appendix

\section{Appendix}

\begin{lemma}
\label{lemma:appendixGMeans} For $x\in \IR$ define $g(x) := F_\nu^{-1} ( \Phi (x))$ for $F_\nu$ and $\Phi$ being the cumulative distribution functions of $\nu$ and the standard normal distribution, and additionally define $\Phi_{0,\sigma}(x) = \Phi ( \frac{x}{\sigma} )$ for any $\sigma>0$. If $\Vert g' \Vert_\infty < \infty$, then there exist $K>0$ and $\sigma>0$ such that
\begin{itemize}
\item
for all $x<-K$ we have $F_\nu(x) \leq \Phi_{0,\sigma}(x) = \Phi ( \frac{x}{\sigma} )$ and
\item
for all $x > K$ we have $F_\nu (x) \geq \Phi_{0,\sigma}(x) = \Phi ( \frac{x}{\sigma} )$.
\end{itemize}

If additionally there exists a constant $c>0$ such that $0 < c \leq g'$ then there exist $K>0$ and $\sigma_1,\sigma_2>0$ such that 
\begin{itemize}
\item
for all $x>K$ we have \quad \
$\Phi_{0,\sigma_1}(x) = \Phi ( \frac{x}{\sigma_1} ) \leq F_\nu(x) \leq \Phi_{0,\sigma_2}(x) = \Phi ( \frac{x}{\sigma_2} )$ \quad and 
\item
for all $x < - K$ we have \ \
$\Phi_{0,\sigma_2}(x) = \Phi ( \frac{x}{\sigma_2} ) \leq F_\nu (x) \leq \Phi_{0,\sigma_1}(x) = \Phi ( \frac{x}{\sigma_1} )$.
\end{itemize}
\end{lemma}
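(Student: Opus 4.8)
The plan is to exploit the identity $g=F_\nu^{-1}\circ\Phi$ and transfer the growth bounds on $g$ implied by boundedness of $g'$ into tail comparisons between $F_\nu$ and the rescaled normal distribution functions $\Phi_{0,\sigma}$. Write $L:=\Vert g'\Vert_\infty$. Since $F_\nu^{-1}$ and $\Phi$ are non-decreasing, $g$ is non-decreasing, and since $g$ is differentiable with $0\le g'\le L$ it is Lipschitz; integrating $g'$ from $0$ gives the one-sided linear bounds
\[
g(x)\le g(0)+Lx\quad(x\ge 0),\qquad g(x)\ge g(0)+Lx\quad(x\le 0).
\]
The only facts about the generalized inverse I would need are the standard $F_\nu(F_\nu^{-1}(u))\ge u$ for $u\in(0,1)$ and $F_\nu^{-1}(F_\nu(y))\le y$. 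Combined with $\Phi(\Phi^{-1}(u))=u$ on $(0,1)$, the latter yields, for $t:=\Phi^{-1}(F_\nu(y))$, the key relation $g(t)=F_\nu^{-1}(F_\nu(y))\le y$.

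For the first statement (only $\Vert g'\Vert_\infty<\infty$) I would fix any $\sigma>L$ and treat the two tails separately. \emph{Upper tail:} for $x>0$ put $t=x/\sigma$; then $g(t)\le g(0)+Lx/\sigma\le x$ once $x\ge K$ with $K$ chosen so that $g(0)\le x\,(\sigma-L)/\sigma$, and monotonicity gives $\Phi(t)\le F_\nu(g(t))\le F_\nu(x)$, i.e.\ $\Phi(x/\sigma)\le F_\nu(x)$. \emph{Lower tail:} for $x<0$ with $F_\nu(x)\in(0,1)$ set $t=\Phi^{-1}(F_\nu(x))<0$ (so $t\to-\infty$ as $x\to-\infty$); the key relation gives $g(t)\le x$, and the lower linear bound $g(0)+Lt\le g(t)$ forces $t\le (x-g(0))/L\le x/\sigma$ for $x\le -K$ (using $\sigma>L$), whence $F_\nu(x)=\Phi(t)\le\Phi(x/\sigma)$. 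Where $F_\nu(x)\in\{0,1\}$, e.g.\ if $\nu$ has bounded support, the asserted inequalities are trivial since $\Phi_{0,\sigma}\in(0,1)$.

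For the second statement I would repeat the argument with the extra hypothesis $g'\ge c>0$, which gives the companion bounds $g(x)\ge g(0)+cx$ $(x\ge 0)$ and $g(x)\le g(0)+cx$ $(x\le 0)$. Choosing $\sigma_1>L$ reproduces $\Phi_{0,\sigma_1}(x)\le F_\nu(x)$ for $x>K$ and $F_\nu(x)\le\Phi_{0,\sigma_1}(x)$ for $x<-K$ exactly as above. For the remaining two inequalities I would take any $\sigma_2<c$: on the upper tail, with $t=\Phi^{-1}(F_\nu(x))>0$, the relation $g(t)\le x$ and the bound $g(t)\ge g(0)+ct$ give $t\le(x-g(0))/c\le x/\sigma_2$ for $x\ge K$, hence $F_\nu(x)=\Phi(t)\le\Phi(x/\sigma_2)$; on the lower tail, with $t=x/\sigma_2$, the bound $g(t)\le g(0)+ct\le x$ (valid for $x\le -K$ since $\sigma_2<c$) together with $\Phi(t)\le F_\nu(g(t))\le F_\nu(x)$ gives $\Phi(x/\sigma_2)\le F_\nu(x)$. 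Since $\sigma_2<c\le L<\sigma_1$, the ordering of $\Phi_{0,\sigma_2}$ and $\Phi_{0,\sigma_1}$ matches the claimed sandwich on each tail.

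The threshold choices of $K$ and $\sigma$ are elementary and I would not belabour them. The step requiring genuine care — and the main obstacle — is the bookkeeping with the right-continuous generalized inverse when $F_\nu$ is neither continuous nor strictly increasing: one must invoke the two inverse identities and $\Phi(\Phi^{-1}(u))=u$ only where they are valid, and split off the degenerate regimes $F_\nu(x)\in\{0,1\}$ (bounded support) and the flat pieces of $g$ where $g'=0$ (harmless, since only $g'\le L$ and $g'\ge c$ are used). Once these are dispatched, the statement is just a transfer of the Lipschitz and lower-growth bounds on $g$ to two-sided tail bounds on $F_\nu$.
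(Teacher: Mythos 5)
Your strategy---converting the linear bounds $g(0)+ct\le g(t)\le g(0)+Lt$ into tail comparisons via inverse-function identities---is the same as the paper's, and the three inequalities you obtain from $F_\nu(F_\nu^{-1}(u))\ge u$ (the upper tail of the first part, and $\Phi_{0,\sigma_1}(x)\le F_\nu(x)$ for $x>K$, $\Phi_{0,\sigma_2}(x)\le F_\nu(x)$ for $x<-K$ in the second) are proved exactly as in the paper. The other three, however, rest on a step that fails as written. The identity you cite as standard, $F_\nu^{-1}(F_\nu(y))\le y$, is the property of the \emph{left}-continuous quantile function $u\mapsto\inf\{x:F_\nu(x)\ge u\}$; the paper defines $F_\nu^{-1}$ as the \emph{right}-continuous generalized inverse $u\mapsto\inf\{x:F_\nu(x)>u\}$, for which the standard inequality is the reverse one, $F_\nu^{-1}(F_\nu(y))\ge y$, with strict inequality whenever $F_\nu$ is constant on a nondegenerate interval to the right of $y$ (for $\nu=\tfrac12\delta_0+\tfrac12\delta_1$ one gets $F_\nu^{-1}(F_\nu(0))=1>0$; this $\nu$ violates the lemma's hypotheses, but it shows the identity is not a generic property of this inverse and so must be derived, not cited). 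Consequently your key relation $g(t)\le x$ for $t=\Phi^{-1}(F_\nu(x))$ is unproven, and with it the bound $F_\nu(x)\le\Phi(x/\sigma)$ for $x<-K$ in the first part and the bounds $F_\nu(x)\le\Phi_{0,\sigma_2}(x)$ for $x>K$ and $F_\nu(x)\le\Phi_{0,\sigma_1}(x)$ for $x<-K$ in the second. This is precisely the ``bookkeeping'' issue you yourself flagged as the main obstacle, but invoking the wrong identity does not dispatch it. A smaller slip of the same kind: for an upper bound on $F_\nu(x)$ the degenerate value $F_\nu(x)=0$ is indeed trivial, but $F_\nu(x)=1$ is not---it would falsify the claim---so it must be shown that this value cannot occur beyond the chosen threshold $K$.

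The gap can be closed in two ways. The paper's way is never to compose in the order $F_\nu^{-1}\circ F_\nu$: it uses only that $F_\nu(y)\le u$ for every $y<F_\nu^{-1}(u)$, i.e.\ it fixes a slack $\epsilon>0$, chooses $K$ so that $g(x/\sigma)-\epsilon\ge x$ for $x<-K$ (your growth bound delivers this with room to spare), and concludes $F_\nu(x)\le F_\nu\bigl(g(x/\sigma)-\epsilon\bigr)=F_\nu\bigl(F_\nu^{-1}(\Phi(x/\sigma))-\epsilon\bigr)\le\Phi(x/\sigma)$; note that this argument also covers the degenerate regime uniformly, with no case distinction. Alternatively, you can rescue your identity from the lemma's standing hypotheses: $g$ is differentiable, hence continuous, hence $F_\nu^{-1}=g\circ\Phi^{-1}$ is continuous on $(0,1)$; since the right-continuous inverse has a (left) jump exactly at those levels where $F_\nu$ has a flat piece, $F_\nu$ has no flat pieces at levels in $(0,1)$, and therefore $F_\nu^{-1}(F_\nu(y))=y$ whenever $F_\nu(y)\in(0,1)$. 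Either patch completes your argument; as written, though, three of the six claimed inequalities rely on an identity that is false for the inverse the paper actually uses.
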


\begin{proof}
Select $K,\sigma,\epsilon > 0$ such that for all $x > K$ we have $g( \frac{x}{\sigma}) \leq x$ and for all $ x < - K$ we have $g( \frac{x}{\sigma})- \epsilon \geq x$, which is possible since $0 \leq g' \leq C < \infty$. Then
\begin{align*}
\begin{array}{lc}
\text{for} \ x > K : \quad &
F_\nu (x) = F_\nu ( \frac{\sigma x}{\sigma} ) \geq F_\nu ( g( \frac{x}{\sigma} ) ) = F_\nu ( F_\nu^{-1} ( \Phi ( \frac{x}{\sigma} ) ) ) \geq \Phi ( \frac{x}{\sigma} ) = \Phi_{0,\sigma} (x), \\
\text{for} \ x < - K : \quad &
F_\nu (x) = F_\nu ( \frac{\sigma x}{\sigma} ) \leq F_\nu ( g( \frac{x}{\sigma} ) - \epsilon ) = F_\nu ( F_\nu^{-1} ( \Phi( \frac{x}{\sigma}) ) - \epsilon ) \leq \Phi ( \frac{x}{\sigma} ) = \Phi_{0,\sigma} (x).
\end{array}
\end{align*}
If additionally $0 < c \leq g'$ then we can choose $K_2 >0$ and some $\sigma_2>0$ such that for all $x > K_2$ we have $g( \frac{x}{\sigma_2}) - \epsilon \geq x$ and for all $ x < - K_2$ we have $g( \frac{x}{\sigma_2}) \leq x$. By an analogous argumentation as above we then obtain the remaining estimates. Setting $K$ as the maximum of $K$ from above and $K_2$ and furthermore $\sigma_1 := \sigma$ we have proved the statement.
\end{proof}

%\renewcommand{\refname}{}
%\vspace*{-0.7cm}
%\newpage
%\bibliographystyle{abbrv}
\bibliography{citations}

\end{document}